  \theoremstyle{remark}
  \newtheorem*{rem*}{\protect\remarkname}
  \theoremstyle{remark}
  \newtheorem*{acknowledgement*}{\protect\acknowledgementname}
\theoremstyle{definition}
\newtheorem{theorem}{Theorem}[section]
\newtheorem{defn}[theorem]{Definition}
\newtheorem{lemma}[theorem]{Lemma}
  \providecommand{\acknowledgementname}{Acknowledgement}
  \providecommand{\remarkname}{Remark}
\begin{document}

\title{Synchrosqueezing-based Recovery of Instantaneous Frequency from Nonuniform
Samples}

\author{Gaurav Thakur%
\thanks{Program in Applied and Computational Mathematics, Princeton University,
Princeton, NJ 08544, USA%
} \ and Hau-Tieng Wu%
\thanks{Mathematics Department, Princeton University, Princeton, NJ 08544,
USA%
}}

\date{June 24, 2011}
\maketitle
\begin{abstract}
We propose a new approach for studying the notion of the instantaneous
frequency of a signal. We build on ideas from the Synchrosqueezing
theory of Daubechies, Lu and Wu and consider a variant of Synchrosqueezing,
based on the short-time Fourier transform, to precisely define the
instantaneous frequencies of a multi-component AM-FM signal. We describe
an algorithm to recover these instantaneous frequencies from the uniform
or nonuniform samples of the signal and show that our method is robust
to noise. We also consider an alternative approach based on the conventional,
Hilbert transform-based notion of instantaneous frequency to compare
to our new method. We use these methods on several test cases and
apply our results to a signal analysis problem in electrocardiography.
\end{abstract}
Keywords: Instantaneous frequency, Synchrosqueezing, Nonuniform sampling,
AM-FM signals, Electrocardiography

\noindent Mathematics Subject Classification (2010): 42C99, 42A38

\section{\label{SecIntro}Introduction}

In a recent paper \cite{DLW10}, Daubechies, Lu and Wu proposed and
analyzed an adaptive wavelet-based signal analysis method that they
called {}``Synchrosqueezing.'' The authors considered continuous-domain
signals that are a superposition of a finite number of approximately
harmonic components, and they showed that Synchrosqueezing is able
to decompose an arbitrary signal of this type. They showed that for
such signals, Synchrosqueezing provides accurate instantaneous frequency
information about their constituent components.\\

The concept of instantaneous frequency is a natural extension of the
usual Fourier frequency that describes how fast a signal oscillates
locally at a given point in time, or more generally, the different
rates of oscillation at a given time. However, instantaneous frequency
has so far remained a somewhat heuristic concept and has lacked a
definition that is both mathematically rigorous and entirely satisfactory
\cite{HW09}. The analysis of signals from samples spaced nonuniformly
in time is also an important problem in several applications, arising
in radar detection, audio processing, seismology and many other fields
\cite{AG01,Ma01}. In this paper, we propose a new approach based
on Synchrosqueezing to precisely define the instantaneous frequencies
of a signal, and to recover these instantaneous frequencies from the
uniform or nonuniform samples of the signal.\\

We consider a class of multi-component AM-FM signals, comparable to
the type studied in \cite{DLW10}, and define a variant of Synchrosqueezing
based on ideas similar to those in \cite{DLW10} but using the short-time
Fourier transform (STFT). We show that by applying this modified Synchrosqueezing
transform to an impulse train weighted by the samples, we can determine
the instantaneous frequencies of the signal with high accuracy. We
show furthermore that this procedure is robust with respect to noise.
For purposes of comparison, we also consider a parallel approach based
on the conventional, Hilbert transform-based notion of instantaneous
frequency, combined with a well known least-squares method for bandlimited
signal reconstruction from nonuniform samples \cite{Gr99}. We apply
both methods to several test cases and compare their performance.
We also consider a problem concerning the extraction of respiration
data from a single-lead electrocardiogram (ECG) signal, and show how
these methods can be used to study it.\\

This paper is organized as follows. In Section \ref{SecBG}, we briefly
review the conventional approach to instantaneous frequency. In Section
\ref{SecMain}, we describe our Synchrosqueezing-based algorithm and
present the main new concepts and results of the paper. Our adaptation
of the least-squares method for bandlimited functions is described
in Section \ref{SecBL}. We then perform our numerical experiments
in Section \ref{SecNumerical} and discuss the application to ECG
analysis.

\section{\label{SecBG}Background Material}

We first discuss the precise meaning of the instantaneous frequency
(IF) of a signal and some of the obstacles encountered in computing
it. We start by making a few definitions. Let $f$ be a tempered distribution.
We denote the forward and inverse Fourier transforms of $f$ by $\hat{f}$
and $\check{f}$, using the normalization $\widehat{e^{-\pi x^{2}}}=e^{-\pi\xi^{2}}$.
For a fixed window function $g$ in the Schwartz class $\mathcal{S}$,
we denote the \textit{modified short-time Fourier transform} of $f$
by

\begin{equation}
V_{g}f(t,\eta):=\int_{-\infty}^{\infty}f(x)g(x-t)e^{-2\pi i\eta(x-t)}dx.\label{STFT}
\end{equation}
This is simply the regular STFT with a modulation factor $e^{2\pi i\eta t}$,
which will be convenient for our purposes. We will also occasionally
write $f_{1}\eqsim f_{2}$ if the inequality $C_{1}f_{1}\leq f_{2}\leq C_{2}f_{1}$
holds, where $C_{1}$ and $C_{2}$ are constants independent of $f_{1}$
and $f_{2}$. Finally, we will let $\mathrm{sinc}(x):=\frac{\sin(\pi x)}{\pi x}$.\\
 \\
 We now consider a function $f$ having the AM-FM form $f(t)=A(t)\cos(2\pi\phi(t))$.
We want to determine $\phi'(t)$, which intuitively describes the
local rate of oscillation of $f$ at $t$. This leads to the following
general definition.

\begin{defn}Suppose $f$ is a superposition of $K$ AM-FM components,
having the form

\begin{equation}
f(t)=\sum_{k=1}^{K}A_{k}(t)\cos(2\pi\phi_{k}(t))\label{IIF}
\end{equation}

with $\phi_{k}'(t)>0$ for all $k$. Then the \textit{ideal instantaneous
frequencies} $\mathrm{IIF}(f,A_{k},\phi_{k})$ are defined to be the
set of functions $\{\phi_{k}'(t)\}_{1\leq k\leq K}$. 

\end{defn}

\noindent Note that this concept only makes sense if all the component
functions $A_{k}$ and $\phi_{k}$ are known, and it is impossible
to define the IF of an arbitrary function $f$ in this way. In fact,
an arbitrary $f$ will generally not have a unique representation
of the form (\ref{IIF}), with many different choices of $A_{k}$,
$\phi_{k}$ and $K$ possible. Even if we restrict $K=1$, there is
in general no way to separate the amplitude factor $A_{1}$ from the
frequency factor $\cos(2\pi\phi_{1})$ without having some additional
information on $f$.\\

\noindent In data analysis applications, we typically only know the
full signal $f$ and we want to obtain an approximation to the IIF.
One of the most commonly used approaches for doing this can be described
as follows \cite{HW09}. For an appropriate $f$, we define the operator
$P^{+}f=\left(f+i\mathcal{H}f\right)/2$, where $\mathcal{H}f=(-i\mathrm{sign}(\eta)\hat{f}(\eta))\thinspace\check{}$
is the Hilbert transform of $f$. $P^{+}$ is known as the \textit{Riesz
projection} in mathematics and as the \textit{single-sideband modulation}
or the \textit{analytic signal} in the engineering literature. We
then consider the \textit{Hilbert transform IF} given by 
\begin{equation}
\mathrm{IF}_{H}f(t)=\frac{1}{2\pi}\mathrm{Im}\left(\frac{\partial_{t}P^{+}f(t)}{P^{+}f(t)}\right).\label{DefIF}
\end{equation}

The motivation for this concept is that the function $f$ is assumed
to have the form (\ref{IIF}) with a single AM-FM component, $f(t)=A(t)\cos(2\pi\phi(t))$,
and under some conditions, $\mathrm{IF}_{H}f(t)$ is a good approximation
to $\phi'(t)$. The \textit{Bedrosian theorem} states that if $\mathrm{supp}(\hat{A})$
and $\mathrm{supp}(\widehat{\cos(2\pi\phi)})$ are disjoint, then
$P^{+}f(t)=A(t)\cdot P^{+}\cos(2\pi\phi(t))$. If we additionally
assume that $\mathrm{supp}(\widehat{\exp(2\pi i\phi)})\subset[0,\infty)$,
then $P^{+}\cos(2\pi\phi(t))=e^{2\pi i\phi(t)}$, and we have $\mathrm{IF}_{H}f(t)=\frac{1}{2\pi}\mathrm{Im}\left(2\pi i\phi'(t)+\frac{A'(t)}{A(t)}\right)=\phi'(t)$.\\
 \\
 These conditions on $A$ and $\phi$ are fairly restrictive and can
be hard to verify for real-world signals, particularly the requirement
that $\mathrm{supp}(\widehat{\exp(2\pi i\phi)})\subset[0,\infty)$.
Even when they hold, the computation of $\mathrm{IF}_{H}$ is often
sensitive to noise and numerical roundoff errors due to the Hilbert
transform computation and the possible cancellation of zeros in the
numerator and denominator of (\ref{DefIF}). In spite of this, $\mathrm{IF}_{H}$
has proven to give meaningful results for signals arising from a variety
of applications, and is in widespread use in data analysis. We refer
to the papers \cite{HW09} and \cite{BP97} for more details on the
physical interpretation of $\mathrm{IF}_{H}$. In Section \ref{SecMain},
we will propose an alternative approach based on different ideas to
approximate the IIF of a signal. We return to $\mathrm{IF}_{H}$ in
Section \ref{SecBL} and show how it can be computed from nonuniform
samples of a bandlimited signal.

\section{\label{SecMain}Synchrosqueezing with the Short-Time Fourier Transform}

Synchrosqueezing is an approach originally introduced in the context
of audio signal analysis in \cite{DM96} and was recently developed
further in \cite{DLW10}. Synchrosqueezing belongs to the family of
time-frequency reassignment methods and is a nonlinear operator that
{}``sharpens'' the time-frequency plot of a signal's continuous
wavelet transform so that it provides more useful information about
the IF. In contrast to the reassignment methods discussed in \cite{ACMF03},
Synchrosqueezing is highly adaptive to the given signal and largely
independent of the particular wavelet used, and also allows the signal
to be reconstructed from the reassigned wavelet coefficients. We refer
to the paper \cite{DLW10} for more details.\\

In this paper, we take a slightly different approach to Synchrosqueezing
than in \cite{DLW10}, based on the modified short-time Fourier transform.
We will develop our theory independently of the results in \cite{DLW10}
and show that this new Synchrosqueezing transform provides a way to
estimate the IIF of a given function from discrete, nonuniform samples
of the function. We will make a series of definitions leading up to
our main theorem. We first define the following class of functions.

\begin{defn}\textbf{(Intrinsic Mode Functions})\\
 The space $\mathcal{B}_{\epsilon}$ of \textit{Intrinsic Mode Functions
(IMFs) of type B} consists of functions $f$ having the form 
\begin{equation}
f(t)=A(t)e^{2\pi i\phi(t)}\label{IMF}
\end{equation}
 such that for some fixed $\epsilon\ll1$, $A$ and $\phi$ satisfy
the following conditions:

\[
A,\phi\in L^{\infty}\cap C^{\infty}\mbox{,}~A^{(m)},\phi^{(m)}\in L^{\infty}\,\mathrm{for\, all\,}m,~A(t)>0,~\phi'(t)>0,
\]

\[
\left\Vert A'\right\Vert _{L^{\infty}}\leq\epsilon\left\Vert \phi'\right\Vert _{L^{\infty}},\quad\left\Vert \phi''\right\Vert _{L^{\infty}}\leq\epsilon\left\Vert \phi'\right\Vert _{L^{\infty}}.
\]

\end{defn}

\noindent We then consider the function class $\mathcal{B}_{\epsilon,d}$,
defined as follows.

\begin{defn}\textbf{\label{DefBClass}(Superpositions of IMFs})\\
 The space $\mathcal{B}_{\epsilon,d}$ of \textit{superpositions of
IMFs} consists of functions $f$ having the form 
\[
f(t)=\sum_{k=1}^{K}f_{k}(t)
\]
 for some $K>0$ and $f_{k}=A_{k}e^{2\pi i\phi_{k}}\in\mathcal{B}_{\epsilon}$
such that the $\phi_{k}$ satisfy 
\[
\inf_{t}\phi'_{k}(t)-\sup_{t}\phi'_{k-1}(t)>d.
\]

\end{defn}

Our main results in this paper will be stated for $\mathcal{B}_{\epsilon,d}$
functions. Intuitively, functions in $\mathcal{B}_{\epsilon,d}$ are
composed of several oscillatory components with slowly time-varying
amplitudes and IIFs (we call these components IMFs, following \cite{DLW10}),
and the IIFs of any two consecutive components are separated by at
least $d$. This function class is fairly restrictive, but it turns
out to be a reasonable model of signals that arise in many applications,
and the conditions on the IMFs will allow us to obtain accurate estimates
of the IIF of $f$. Note, however, that $\mathcal{B}_{\epsilon,d}$
is not a vector space. We next define a closely related notion.

\begin{defn}\textbf{(Impulse Trains)}

Let $T>0$ and $\{a_{n}\}\in l^{\infty}$ with $\|\{a_{n}\}\|_{l^{\infty}}\leq T^{2}$.
The \textit{impulse train class} is defined by

\[
\mathcal{D}_{\epsilon,d}^{T,\{a_{n}\}}=\left\{ \sum_{n=-\infty}^{\infty}(T+a_{n+1}-a_{n})\delta(t-Tn-a_{n})f(t):~~f\in\mathcal{B}_{\epsilon,d}\right\} .
\]

\end{defn}

\noindent We can treat elements of the impulse train class $\mathcal{D}_{\epsilon,d}^{T,\{a_{n}\}}$
as tempered distributions. As $T\to0$, $\tilde{f}\in\mathcal{D}_{\epsilon,d}^{T,\{a_{n}\}}$
converges weakly to $f\in\mathcal{B}_{\epsilon,d}$ with respect to
Schwartz test functions, so $\mathcal{D}_{\epsilon,d}^{T,\{a_{n}\}}$
can be thought of as a sampled version of the continuous-domain $\mathcal{B}_{\epsilon,d}$
class. In applications, we are given a collection of nonuniformly
spaced samples of the form $\{f(t_{n})\}$, $t_{n}=Tn+a_{n}$, where
$\{a_{n}\}$ represents a small perturbation from uniform samples
$\{Tn\}$ with sampling interval $T$. This can be converted to an
element in the class $\mathcal{D}_{\epsilon,d}^{T,\{a_{n}\}}$ by
multiplying $f(t_{n})$ by a factor $t_{n+1}-t_{n}=T+a_{n+1}-a_{n}$.\\

For a given window function $g\in\mathcal{S}$, we can apply the modified
short-time Fourier transform (\ref{STFT}) to any $\tilde{f}\in\mathcal{D}_{\epsilon,d}^{T,\{a_{n}\}}$,
with the tempered distribution $\tilde{f}$ acting on $g(\cdot-t)e^{-2\pi i\eta(\cdot-t)}\in\mathcal{S}$.
When $|V_{g}\tilde{f}(t,\eta)|>0$, we define the \textit{instantaneous
frequency information $\omega\tilde{f}(t,\eta)$} by 
\begin{equation}
\omega\tilde{f}(t,\eta)=\frac{\partial_{t}V_{g}\tilde{f}(t,\eta)}{2\pi iV_{g}\tilde{f}(t,\eta)}.\label{IFI}
\end{equation}

\noindent The idea with (\ref{IFI}) is that $\partial_{t}V_{g}\tilde{f}$
is a first approximation to the IF of $f$, and dividing by $V_{g}\tilde{f}$
{}``removes the influence'' of the window $g$ and sharpens the
resulting time-frequency plot. We can use this to consider the following
operator, which will be our main computational tool in this paper.

\begin{defn}\textbf{(STFT Synchrosqueezing})\\
 For $\tilde{f}\in\mathcal{D}_{\epsilon,d}^{T,\{a_{n}\}}$, the \textit{STFT
Synchrosqueezing transform with resolution $\alpha>0$ and threshold
$\gamma\geq0$ } is defined by 
\begin{equation}
S^{\alpha,\gamma}\tilde{f}(t,\xi)=\left|\left\{ \eta:~|\xi-\omega\tilde{f}(t,\eta)|<\frac{\alpha}{2},~|V_{g}\tilde{f}(t,\eta)|\geq\gamma,~0\leq\eta\leq\frac{1}{T}\right\} \right|\label{STFTSS}
\end{equation}
 where $(t,\xi)\in\mathbb{R}\times\alpha\mathbb{N}$ and $\left|\cdot\right|$
denotes the Lebesgue measure on $\mathbb{R}$.

\end{defn}

\noindent $S^{\alpha,\gamma}\tilde{f}$ is a kind of highly concentrated
version of $\omega\tilde{f}$ that {}``squeezes'' the content of
$\omega\tilde{f}$ closer to the IF curves in the time-frequency plane.
We are finally in a position to define an alternative notion of instantaneous
frequency, based on the concepts discussed above. 

\noindent \begin{defn}\textbf{(Synchrosqueezing-based Instantaneous
Frequency})\\
 \textit{The Synchrosqueezing-based IF with resolution $\alpha$ and
threshold $\gamma$} of $f\in\mathcal{B}_{\epsilon,d}$, estimated
from a corresponding $\tilde{f}\in\mathcal{D}_{\epsilon,d}^{T,\{a_{n}\}}$,
is a set-valued function $\mathrm{IF}_{S}f(t):\mathbb{R}\rightarrow2^{\alpha\mathbb{N}}$
given by
\[
\mathrm{IF}_{S}f(t)=\left\{ \alpha n:n\in\mathbb{N},S^{\alpha,\gamma}\tilde{f}(t,\alpha n)>0\right\} .
\]

\noindent \end{defn}

As a motivating example of this concept, let $f(t)=e^{2\pi it}\in\mathcal{B}_{\epsilon,d}$.
The IIF is clearly the singleton $\{1\}$. For the window $g(t)=e^{-\pi t^{2}}$,
we can compute $V_{g}f(t,\eta)=e^{2\pi it-\pi(\eta-1)^{2}}$, so $\omega(t,\eta)=1$
for all $(t,\eta)$. This means that for $\gamma=0$, $S^{\alpha,\gamma}f(t,\xi)$
will be supported on $\{\lfloor\frac{1}{\alpha}\rfloor\alpha,\lceil\frac{1}{\alpha}\rceil\alpha\}$
for all $t$, which is close to $\{1\}$ for small $\alpha$. We will
show in Theorem \ref{ThmMain} that if $\tilde{f}\in\mathcal{D}_{\epsilon,d}^{T,\{a_{n}\}}$
is a sampled approximation of $f$ and $\alpha$ and $\gamma$ are
chosen appropriately, then it has the same $\mathrm{IF}_{S}$ set.\\

We will use several auxiliary notations in the statement and proof
of our main theorem. Let
\begin{equation}
\tilde{f}(t)=\sum_{k=1}^{K}\sum_{n=-\infty}^{\infty}\delta(t-Tn-a_{n})(T+a_{n+1}-a_{n})A_{k}(t)e^{2\pi i\phi_{k}(t)}\in\mathcal{D}_{\epsilon,d}^{T,\{a_{n}\}}.\label{ftilde}
\end{equation}
We then define the following expressions:
\begin{eqnarray*}
I_{n}^{(m)} & := & \int_{-\infty}^{\infty}|u|^{n}|g^{(m)}(u)|du\\
\mathcal{I} & := & \bigcup_{n\in\mathbb{Z}}\mathcal{I}_{n},\quad\mathcal{I}_{n}=[Tn,Tn+a_{n}]\mathrm{\, if\,}a_{n}\geq0\,\mathrm{or\,}[Tn+a_{n},Tn]\mathrm{\, if\,}a_{n}<0\\
E_{1} & := & \sum_{k=1}^{K}\epsilon\left\Vert \phi_{k}'\right\Vert _{L^{\infty}}\left(I_{1}+\pi\left\Vert A_{k}\right\Vert _{L^{\infty}}I_{2}\right)\\
E_{1}' & := & \sum_{k=1}^{K}\epsilon\left\Vert \phi_{k}'\right\Vert _{L^{\infty}}\left(\frac{1}{2\pi}I_{0}+\left(\left\Vert A_{k}\right\Vert _{L^{\infty}}+\left\Vert \phi_{k}'\right\Vert _{L^{\infty}}\right)I_{1}+\pi\left\Vert A_{k}\right\Vert _{L^{\infty}}\left\Vert \phi_{k}'\right\Vert _{L^{\infty}}I_{2}\right)\\
E_{3} & := & \sup_{1\leq k\leq K}\left\Vert \phi_{k}'\right\Vert _{L^{\infty}}
\end{eqnarray*}

\noindent This leads to the following results.

\begin{theorem}\label{ThmMain} Let $0\leq T\leq1$. Suppose we have
$\tilde{f}\in\mathcal{D}_{\epsilon,d}^{T,\{a_{n}\}}$ as in (\ref{ftilde})
with $\left\Vert \{a_{n}\}\right\Vert _{l^{\infty}}\leq T^{2}$, and
a window function $g\in\mathcal{S}$ with $\mbox{supp}(\hat{g})\subset[-\frac{d}{2},\frac{d}{2}]$
and $\int_{\mathcal{I}}|g(x)|+|g'(x)|dx\leq\frac{\kappa}{T}\left\Vert \{a_{n}\}\right\Vert _{l^{\infty}}$
for a constant $\kappa$. Then there exist numbers $E_{2}=E_{2}(A_{k},\phi_{k},g)$
and $E_{2}'=E_{2}(A_{k},\phi_{k},g)$ such that if we have a given
resolution $\alpha$ satisfying
\[
\alpha\geq\frac{2(E_{1}'+E_{2}')}{E_{1}+E_{2}}+2E_{3},
\]
then the following statements hold.
\begin{enumerate}
\item Let $0\leq\eta\leq\frac{1}{T}$ and fix $k$, $1\leq k\leq K$. For
each pair $(t,\eta)\in Z_{k}:=\{(t,\eta):|\eta-\phi_{k}'(t)|<\frac{d}{2}\}$
with $|V_{g}\tilde{f}(t,\eta)|>E_{1}+E_{2}$, we have $|\omega\tilde{f}(t,\eta)-\phi_{k}'(t)|<\frac{\alpha}{2}.$
If $(t,\eta)\not\in Z_{k}$ for any $k$, then $|V_{g}\tilde{f}(t,\eta)|\leq E_{1}+E_{2}$.
\item Suppose we have a threshold $\gamma$ such that $E_{1}+E_{2}<\gamma\leq|V_{g}\tilde{f}(t,\eta)|$
for all $(t,\eta)\in Z_{k}$. Then for all $t$, $S^{\alpha,\gamma}\tilde{f}(t,\xi)$
is supported in the $2K$-point set $\bigcup_{1\leq k\leq K}\{\lfloor\frac{\phi_{k}'(t)}{\alpha}\rfloor\alpha,\lceil\frac{\phi_{k}'(t)}{\alpha}\rceil\alpha\}$.
\end{enumerate}
\end{theorem}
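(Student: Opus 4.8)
The plan is to analyze the modified STFT $V_g\tilde f(t,\eta)$ by comparing it with the STFT of the underlying continuous signal $f\in\mathcal{B}_{\epsilon,d}$, and to exploit the band-limiting hypothesis $\mathrm{supp}(\hat g)\subset[-\frac d2,\frac d2]$ together with the frequency separation $d$ to localize which IMF contributes near a given $(t,\eta)$. First I would write $V_g\tilde f(t,\eta)=\sum_k\sum_n (T+a_{n+1}-a_n) A_k(Tn+a_n)e^{2\pi i\phi_k(Tn+a_n)}g(Tn+a_n-t)e^{-2\pi i\eta(Tn+a_n-t)}$ and recognize the inner sum as a Riemann-type quadrature for $\int A_k(x)e^{2\pi i\phi_k(x)}g(x-t)e^{-2\pi i\eta(x-t)}dx = V_g f_k(t,\eta)$, where the nodes $Tn+a_n$ have spacing $T+a_{n+1}-a_n$. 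The quadrature error is controlled by the variation of the integrand, which brings in the derivatives of $A_k$, $\phi_k$ and $g$; this is exactly where the quantities $I_n^{(m)}$ and the $\epsilon$-smallness conditions $\|A_k'\|_\infty\le\epsilon\|\phi_k'\|_\infty$, $\|\phi_k''\|_\infty\le\epsilon\|\phi_k'\|_\infty$ enter, producing the error budget $E_1$ (and $E_1'$ for the $\partial_t$-differentiated version needed for $\omega\tilde f$). The stray term $\int_{\mathcal I}|g|+|g'|$ over the perturbation intervals is bounded by the hypothesis $\le\frac{\kappa}{T}\|\{a_n\}\|_{l^\infty}$; I would absorb this and the remaining window-dependent pieces into the constants $E_2,E_2'$ (and $E_3$ handles the $O(\alpha)$ discretization onto $\alpha\mathbb N$).

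Next I would estimate $V_g f_k(t,\eta)$ itself. Freezing $A_k$ and $\phi_k$ at their values at $t$ via Taylor expansion (first order for $\phi_k$, zeroth for $A_k$, with remainders again $O(\epsilon)$), the leading term is $A_k(t)e^{2\pi i\phi_k(t)}\hat g(\eta-\phi_k'(t))$. By the support hypothesis on $\hat g$, this vanishes unless $|\eta-\phi_k'(t)|<\frac d2$, i.e. unless $(t,\eta)\in Z_k$; since the $\phi_k'$ are $d$-separated, at most one $k$ is active. This gives the second half of statement (1): if $(t,\eta)\notin\bigcup_k Z_k$ then every leading term vanishes and $|V_g\tilde f(t,\eta)|$ is bounded by the total error $E_1+E_2$. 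For $(t,\eta)\in Z_k$ with $|V_g\tilde f(t,\eta)|>E_1+E_2$, the single term $A_k(t)e^{2\pi i\phi_k(t)}\hat g(\eta-\phi_k'(t))$ dominates; differentiating in $t$, the leading behavior of $\partial_t V_g\tilde f$ is $2\pi i\phi_k'(t)$ times the leading term of $V_g\tilde f$, plus error $\le E_1'+E_2'$. Dividing as in (\ref{IFI}) and using $|V_g\tilde f|>E_1+E_2$ for the denominator yields $|\omega\tilde f(t,\eta)-\phi_k'(t)|\le \frac{E_1'+E_2'}{E_1+E_2}+E_3<\frac\alpha2$ by the assumed lower bound on $\alpha$, completing statement (1).

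For statement (2): fix $t$ and $\xi=\alpha n\notin\bigcup_k\{\lfloor\phi_k'(t)/\alpha\rfloor\alpha,\lceil\phi_k'(t)/\alpha\rceil\alpha\}$; then $|\xi-\phi_k'(t)|\ge\alpha$ for every $k$ — wait, more precisely $\mathrm{dist}(\xi,\phi_k'(t))\ge\alpha/2$ in the sense that no half-open interval of radius $\alpha/2$ about $\xi$ reaches $\phi_k'(t)$... I would argue: the set defining $S^{\alpha,\gamma}\tilde f(t,\xi)$ consists of $\eta\in[0,\frac1T]$ with $|V_g\tilde f(t,\eta)|\ge\gamma>E_1+E_2$ and $|\xi-\omega\tilde f(t,\eta)|<\frac\alpha2$. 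The threshold condition forces $\eta\in Z_k$ for some $k$ by statement (1), and then $|\omega\tilde f(t,\eta)-\phi_k'(t)|<\frac\alpha2$, hence $|\xi-\phi_k'(t)|<\alpha$, which puts $\xi$ within distance $\alpha$ of $\phi_k'(t)$, i.e. $\xi\in\{\lfloor\phi_k'(t)/\alpha\rfloor\alpha,\lceil\phi_k'(t)/\alpha\rceil\alpha\}$ — contradiction. So the defining set has measure zero (indeed is empty), giving $S^{\alpha,\gamma}\tilde f(t,\xi)=0$ off the $2K$-point set. The main obstacle I anticipate is bookkeeping: carefully tracking all the Taylor remainders and quadrature error terms so that they genuinely assemble into $E_1,E_1'$ plus window-only constants $E_2,E_2'$, and in particular making sure the $\partial_t$ of the quadrature error (needed for $\omega$) stays controlled — this requires the $g'$ and higher-moment terms $I_n^{(m)}$ and is the delicate part of the estimate. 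Everything else is a routine, if lengthy, chain of triangle inequalities.
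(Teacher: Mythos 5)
Your overall architecture is the same as the paper's: a two-step comparison $V_{g}\tilde{f}\approx V_{g}f\approx Q_{k}(t,\eta):=A_{k}(t)e^{2\pi i\phi_{k}(t)}\hat{g}(\eta-\phi_{k}'(t))$, the observation that the band-limitation of $\hat{g}$ and the $d$-separation kill all but one $Q_{k}$, the division argument giving $|\omega\tilde{f}-\phi_{k}'|\leq\frac{E_{1}'+E_{2}'}{E_{1}+E_{2}}+E_{3}$, and the grid-point contradiction for part (2). (Two label quibbles: in the paper $E_{1},E_{1}'$ are the Taylor/freezing errors and $E_{2},E_{2}'$ the sampling errors, the reverse of your assignment; and $E_{3}=\sup_{k}\|\phi_{k}'\|_{L^{\infty}}$ enters through the cross term $\phi_{k}'(Q_{k}-V_{g}\tilde{f})/V_{g}\tilde{f}$, not through "discretization onto $\alpha\mathbb{N}$" --- though your final inequality uses it correctly.)

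The genuine gap is in the sampling-error estimate, i.e.\ the content of Lemma \ref{ThmMainP2}. You propose to bound $\bigl|\sum_{n}(t_{n+1}-t_{n})h(t_{n})-\int h\bigr|$ by "the variation of the integrand," which gives roughly $(T+2\|\{a_{n}\}\|_{l^{\infty}})\|h'\|_{L^{1}}$. But $h(u)=A_{k}(u)e^{2\pi i(\phi_{k}(u)-(u-t)\eta)}g(u-t)$ has $\|h'\|_{L^{1}}\gtrsim\eta$, and $\eta$ ranges up to $\frac{1}{T}$, so this naive bound is $O(1)$ rather than $O(T)$: it produces an $E_{2}$ comparable to $\|A_{k}\|_{L^{\infty}}I_{0}$, which is as large as $|V_{g}\tilde{f}|$ itself can ever be, so the hypothesis $|V_{g}\tilde{f}|>E_{1}+E_{2}$ would never be satisfiable and the theorem would be vacuous. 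The paper's proof has to beat this lost factor of $\eta$: it writes the quadrature defect as $\int G(u)C(u)\,du$ with $C$ the piecewise-linear "sawtooth" weight, replaces $C$ by the uniform sawtooth $W$ (whose Fourier series is supported on frequencies $\frac{n}{T}$ with coefficients $O(T/n)$), and then exploits the oscillation of $G$ --- via integration by parts against the phase $e^{2\pi i(\phi(u)-(\eta+\xi)u)}$ --- to show $\hat{G}(\pm\frac{2\pi n}{T})$ decays like $(\eta+1)/J$ with $J\gtrsim\frac{n}{T}$, recovering a net bound $C(T+T^{2}(\eta+1))=O(T)$ uniformly over $0\leq\eta\leq\frac{1}{T}$. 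The residual $D=W-C$ from the nonuniform perturbation is where the hypothesis $\int_{\mathcal{I}}|g|+|g'|\leq\frac{\kappa}{T}\|\{a_{n}\}\|_{l^{\infty}}$ is actually consumed (you correctly anticipated that role). Without some version of this Poisson-summation/nonstationary-phase step, your quadrature estimate does not close.
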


Theorem \ref{ThmMain} says that the $\mathrm{IF}_{S}$ defined by
STFT Synchrosqueezing approximates the IIF set accurately up to the
preassigned resolution $\alpha$, without knowing anything about the
symbolic form of $f$. The result also does not depend on the precise
shape of the window $g$ that we use, so the $\mathrm{IF}_{S}$ is
in a sense adaptive to the structure of the sampled function $\tilde{f}$.
The procedure suggested by Theorem \ref{ThmMain} can be implemented
as follows. We first discretize $\tilde{f}$ on a uniform grid (finer
than $T$) by zero-padding in between the impulses. For each $t$,
we can then compute $V_{g}\tilde{f}$ and $\partial_{t}V_{g}\tilde{f}=-V_{g'}\tilde{f}+2\pi i\eta V_{g}\tilde{f}$
on using FFTs and use the results to approximate $S^{\alpha,\gamma}\tilde{f}(t,\alpha n)$
for $n\in\mathbb{N}$. In practice, the upper bound on $\eta$ in
(\ref{STFTSS}) is determined by how finely $\tilde{f}$ is discretized,
and as long as the threshold $\gamma$ is not too small, we ignore
the locations where the denominator in (\ref{IFI}) is close to zero
and avoid numerical stability issues. We finally find the (numerical)
support of $S^{\alpha,\gamma}\tilde{f}(t,\cdot)$ to determine the
component(s) of $\mathrm{IF}_{S}f(t)$.\\

There is a tradeoff between $\alpha$ and $\gamma$ and the fluctuation
of the IIF components, and this is a kind of uncertainty principle
that is inherent to the $\mathrm{IF}_{S}$ concept. The $\mathrm{IF}_{S}$
is only meaningful up to the resolution $\alpha$, and beyond that,
we cannot approximate the IIF to any further level of accuracy. In
the simplest case when the amplitudes $\{A_{k}\}$ and IIF components
$\{\phi_{k}'\}$ are all constant, the lower bound on $\alpha$ will
be small and will allow us to specify a fine resolution $\alpha$,
resulting in a very accurate estimate of the IIF. On the other hand,
if the $\{A_{k}'\}$ or $\{\phi_{k}''\}$ are large, then the lower
bound on $\gamma$ will be significant, making $S^{\alpha,\gamma}\tilde{f}(t,\xi)=0$
for most $t$. In physical terms, the existence of a $\gamma$ in
Theorem \ref{ThmMain} ensures that the magnitude of each component
is not too small and its IIF is not too rapidly oscillating, or else
the component would be indistinguishable from noise.\\

The proof of Theorem \ref{ThmMain} involves a series of estimates.
Let $f$ and $\tilde{f}$ be given as in Theorem \ref{ThmMain}. In
what follows, we let $Q_{k}(t,\eta)=A_{k}(t)e^{2\pi i\phi_{k}(t)}\hat{g}(\eta-\phi_{k}'(t))$
to simplify some notation.

\begin{lemma}\label{ThmMainP1} If $(t,\eta)\in Z_{k}$ for some
fixed $k$, $1\leq k\leq K$, then
\begin{equation}
\left|V_{g}f(t,\eta)-Q_{k}(t,\eta)\right|\leq E_{1}\label{L1E1}
\end{equation}
and
\begin{equation}
\left|\frac{1}{2\pi i}\partial_{t}V_{g}f(t,\eta)-\phi_{k}'(t)Q_{k}(t,\eta)\right|\leq E_{1}'.\label{L1E2}
\end{equation}
If $(t,\eta)\not\in Z_{k}$ for any $k$, then
\[
\left|V_{g}f(t,\eta)\right|\leq E_{1}\mathrm{\quad and\quad}\left|\frac{1}{2\pi i}\partial_{t}V_{g}f(t,\eta)\right|\leq E_{1}'.
\]
\end{lemma}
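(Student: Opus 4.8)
The plan is to use the linearity of the modified STFT to reduce everything to a single-component estimate, and then to approximate each one-component transform by its first-order Taylor expansion in the window variable, controlling the remainders by the IMF conditions $\|A_k'\|_{L^{\infty}}\le\epsilon\|\phi_k'\|_{L^{\infty}}$ and $\|\phi_k''\|_{L^{\infty}}\le\epsilon\|\phi_k'\|_{L^{\infty}}$. First I would record the consequence of the frequency separation together with the band-limited window. Since $f=\sum_{k=1}^K f_k$ with $f_k=A_ke^{2\pi i\phi_k}$, we have $V_gf=\sum_k V_gf_k$ and $\partial_t V_gf=\sum_k\partial_t V_gf_k$. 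For a fixed $t$, the bumps $\hat g(\eta-\phi_j'(t))$ are supported on disjoint intervals: Definition \ref{DefBClass} forces $|\phi_j'(t)-\phi_k'(t)|>d$ whenever $j\neq k$, so if $(t,\eta)\in Z_k$ then $|\eta-\phi_j'(t)|>d/2$ for every $j\neq k$, and $\mathrm{supp}(\hat g)\subset[-\frac d2,\frac d2]$ gives $Q_j(t,\eta)=0$ for $j\neq k$; if $(t,\eta)$ lies in no $Z_k$, then $Q_j(t,\eta)=0$ for all $j$. Consequently $V_gf-Q_k=\sum_j(V_gf_j-Q_j)$ on $Z_k$ and $V_gf=\sum_j(V_gf_j-Q_j)$ off all the $Z_k$, with the analogous identities obtained by replacing $V_gf_j$ by $\frac1{2\pi i}\partial_t V_gf_j$ and $Q_j$ by $\phi_j'(t)Q_j$. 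So it suffices to bound a single component's error by the $k$-th summand of $E_1$ (respectively $E_1'$).

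Next I would estimate $V_gf_k-Q_k$. Changing variables $u=x-t$ in (\ref{STFT}) gives $V_gf_k(t,\eta)=\int A_k(t+u)e^{2\pi i\phi_k(t+u)}g(u)e^{-2\pi i\eta u}\,du$, while the Fourier normalization used in the paper makes $Q_k(t,\eta)=A_k(t)e^{2\pi i\phi_k(t)}\int g(u)e^{-2\pi i(\eta-\phi_k'(t))u}\,du$. Subtracting, inserting the intermediate quantity $A_k(t)e^{2\pi i\phi_k(t+u)}$, and using $|A_k(t+u)-A_k(t)|\le\|A_k'\|_{L^{\infty}}|u|$, the elementary bound $|e^{i\theta}-1|\le|\theta|$, and the first-order Taylor remainder $|\phi_k(t+u)-\phi_k(t)-\phi_k'(t)u|\le\frac12\|\phi_k''\|_{L^{\infty}}u^2$, I can bound the integrand difference by $\epsilon\|\phi_k'\|_{L^{\infty}}\bigl(|u|+\pi\|A_k\|_{L^{\infty}}u^2\bigr)|g(u)|$; integrating in $u$ produces $\epsilon\|\phi_k'\|_{L^{\infty}}(I_1+\pi\|A_k\|_{L^{\infty}}I_2)$, whose sum over $k$ is $E_1$.

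For the derivative I would differentiate under the integral sign (legitimate because $g\in\mathcal S$ and every derivative of $A_k,\phi_k$ is bounded), obtaining $\partial_t V_gf_k(t,\eta)=\int f_k'(t+u)g(u)e^{-2\pi i\eta u}\,du$ with $f_k'=(A_k'+2\pi i\phi_k'A_k)e^{2\pi i\phi_k}$, so that $\frac1{2\pi i}\partial_t V_gf_k(t,\eta)=\int\bigl(\frac{A_k'(t+u)}{2\pi i}+\phi_k'(t+u)A_k(t+u)\bigr)e^{2\pi i\phi_k(t+u)}g(u)e^{-2\pi i\eta u}\,du$. Comparing with $\phi_k'(t)Q_k(t,\eta)$, the $\frac{A_k'}{2\pi i}$ term contributes at most $\frac{\epsilon}{2\pi}\|\phi_k'\|_{L^{\infty}}$ pointwise, and the remaining difference $\phi_k'(t+u)A_k(t+u)e^{2\pi i\phi_k(t+u)}-\phi_k'(t)A_k(t)e^{2\pi i(\phi_k(t)+\phi_k'(t)u)}$ is split through the two intermediate quantities $\phi_k'(t)A_k(t+u)e^{2\pi i\phi_k(t+u)}$ and $\phi_k'(t)A_k(t)e^{2\pi i\phi_k(t+u)}$, using $|\phi_k'(t+u)-\phi_k'(t)|\le\|\phi_k''\|_{L^{\infty}}|u|$, $|A_k(t+u)-A_k(t)|\le\|A_k'\|_{L^{\infty}}|u|$, and again $|e^{i\theta}-1|\le|\theta|$ with the same phase remainder. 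This bounds the whole bracket by $\epsilon\|\phi_k'\|_{L^{\infty}}\bigl(\frac1{2\pi}+(\|A_k\|_{L^{\infty}}+\|\phi_k'\|_{L^{\infty}})|u|+\pi\|A_k\|_{L^{\infty}}\|\phi_k'\|_{L^{\infty}}u^2\bigr)$, and integrating against $|g(u)|$ and summing over $k$ yields exactly $E_1'$. Combining these per-component bounds with the identities from the first step gives all four inequalities, including the $(t,\eta)\notin Z_k$ case where every $Q_j$ vanishes.

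I expect the only real work to be bookkeeping in the derivative estimate: keeping the threefold triangle-inequality split organized so that the coefficients of $I_0,I_1,I_2$ land precisely on the definition of $E_1'$, and tracking the factors of $\pi$ (from applying $|e^{i\theta}-1|\le|\theta|$ to $\theta=2\pi(\cdots)$) and $\frac12$ (from the Taylor remainder) so they combine to the stated constant. There is no genuine conceptual obstacle; the one point worth a sentence of justification is the differentiation under the integral sign, which the Schwartz window and the uniform bounds on all derivatives of the IMF data make routine.
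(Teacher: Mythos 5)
Your proposal is correct and follows essentially the same route as the paper: isolate $Q_k$ via the band-limitation of $\hat g$ and the $d$-separation of the $\phi_j'$, then compare each single-component transform to its frozen-phase/frozen-amplitude approximation using $\|A_k'\|_{L^\infty}\le\epsilon\|\phi_k'\|_{L^\infty}$, $\|\phi_k''\|_{L^\infty}\le\epsilon\|\phi_k'\|_{L^\infty}$, and the quadratic Taylor remainder of the phase. The only cosmetic differences are that you differentiate under the integral after the substitution $u=x-t$ where the paper integrates by parts (these are identical computations), and you split the product difference through two intermediate terms where the paper uses a single product-rule bound; both land on exactly $E_1$ and $E_1'$.
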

\begin{proof}
Note that for any $l$, the bandwidth condition on $g$ shows that
$Q_{l}(t,\eta)=0$ whenever $(t,\eta)\not\in Z_{l}$. We assume $(t,\eta)\in Z_{k}$
for some $k$, as the other case can be done in exactly the same way.
For the first bound (\ref{L1E1}), Taylor expansions show that
\[
|1-e^{2\pi i(-\phi_{k}(x)+\phi_{k}(t)+(x-t)\phi_{k}'(t))}|\leq\pi\left\Vert \phi_{k}''\right\Vert _{L^{\infty}}|x-t|^{2}.
\]

\noindent It follows that {\allowdisplaybreaks
\begin{eqnarray*}
 &  & \left|V_{g}f(t,\eta)-Q_{k}(t,\eta)\right|\\
 & = & \left|V_{g}f(t,\eta)-\sum_{k=1}^{K}Q_{k}(t,\eta)\right|\\
 & \leq & \sum_{k=1}^{K}\bigg[\int_{-\infty}^{\infty}\left|A_{k}(x)-A_{k}(t)\right|\left|g(x-t)e^{-2\pi i\eta(x-t)}\right|dx\\
 &  & \quad+|A_{k}(t)|\int_{-\infty}^{\infty}\left|e^{2\pi i\phi_{k}(x)}-e^{2\pi i(\phi_{k}(t)+(x-t)\phi_{k}'(t))}\right|\left|g(x-t)e^{-2\pi i\eta(x-t)}\right|dx\bigg]\\
 & \leq & \sum_{k=1}^{K}\left(\int_{-\infty}^{\infty}\left\Vert A_{k}'\right\Vert _{L^{\infty}}|x-t|\left|g(x-t)\right|dx+|A_{k}(t)|\int_{-\infty}^{\infty}\pi\left\Vert \phi_{k}''\right\Vert _{L^{\infty}}|x-t|^{2}\left|g(x-t)\right|dx\right)\\
 & \leq & \sum_{k=1}^{K}\epsilon\left\Vert \phi_{k}'\right\Vert _{L^{\infty}}\left(I_{1}+\pi\left\Vert A_{k}\right\Vert _{L^{\infty}}I_{2}\right).
\end{eqnarray*}

\noindent We follow similar arguments for the second bound (\ref{L1E2}).
We first have the estimate
\begin{eqnarray*}
 &  & |A_{k}(x)\phi_{k}'(x)e^{2\pi i\phi_{k}(x)}-A_{k}(t)\phi_{k}'(t)e^{2\pi i(\phi_{k}(t)+\phi_{k}'(t)(x-t))}|\\
 & \leq & |A_{k}(x)\phi_{k}'(x)-A_{k}(t)\phi_{k}'(t)|+|1-e^{2\pi i(-\phi_{k}(x)+\phi_{k}(t)+\phi_{k}'(t)(x-t))}||A_{k}(t)\phi_{k}'(t)|\\
 & \leq & \left(\left\Vert A_{k}\right\Vert _{L^{\infty}}\left\Vert \phi_{k}''\right\Vert _{L^{\infty}}+\left\Vert A_{k}'\right\Vert _{L^{\infty}}\left\Vert \phi_{k}'\right\Vert _{L^{\infty}}\right)|x-t|+\pi\left\Vert A_{k}\right\Vert _{L^{\infty}}\left\Vert \phi_{k}'\right\Vert _{L^{\infty}}\left\Vert \phi_{k}''\right\Vert _{L^{\infty}}|x-t|^{2}.
\end{eqnarray*}
Thus we obtain
\begin{eqnarray*}
 &  & \left|\frac{1}{2\pi i}\partial_{t}V_{g}f(t,\eta)-\phi_{k}'(t)Q_{k}(t,\eta)\right|\\
 & = & \bigg|\sum_{k=1}^{K}\bigg[-\frac{1}{2\pi i}\int_{-\infty}^{\infty}A_{k}(x)e^{2\pi i\phi_{k}(x)}\partial_{x}(g(x-t)e^{-2\pi i\eta(x-t)})dx\\
 &  & \quad-\phi_{k}'(t)A_{k}(t)\int_{-\infty}^{\infty}e^{2\pi i(\phi_{k}(t)+\phi_{k}'(t)(x-t))}g(x-t)e^{-2\pi i\eta(x-t)}dx\bigg]\bigg|\\
 & \leq & \sum_{k=1}^{K}\bigg[\int_{-\infty}^{\infty}\left|A_{k}(x)\phi_{k}'(x)e^{2\pi i\phi_{k}(x)}-A_{k}(t)\phi_{k}'(t)e^{2\pi i(\phi_{k}(t)+\phi_{k}'(t)(x-t))}\right|\left|g(x-t)\right|dx\\
 &  & \quad+\frac{1}{2\pi}\int_{-\infty}^{\infty}|A_{k}'(x)||g(x-t)|dx\bigg]\\
 & \leq & \sum_{k=1}^{K}\epsilon\left\Vert \phi_{k}'\right\Vert _{L^{\infty}}\left(\left\Vert A_{k}\right\Vert _{L^{\infty}}I_{1}+\left\Vert \phi_{k}'\right\Vert _{L^{\infty}}I_{1}+\pi\left\Vert A_{k}\right\Vert _{L^{\infty}}\left\Vert \phi_{k}'\right\Vert _{L^{\infty}}I_{2}+\frac{1}{2\pi}I_{0}\right).
\end{eqnarray*}
}
\end{proof}
\noindent \begin{lemma}\label{ThmMainP2}Let $0\leq\eta\leq\frac{1}{T}$.
Then
\begin{equation}
\left|V_{g}\tilde{f}(t,\eta)-V_{g}f(t,\eta)\right|\leq E_{2}\label{L2E1}
\end{equation}
and
\begin{equation}
\frac{1}{2\pi}\left|\partial_{t}V_{g}\tilde{f}(t,\eta)-\partial_{t}V_{g}f(t,\eta)\right|\leq E_{2}'.\label{L2E2}
\end{equation}

\noindent for some numbers $E_{2}=E_{2}(A_{k},\phi_{k},g)$ and $E_{2}'=E_{2}'(A_{k},\phi_{k},g)$
independent of $\eta$ or $T$.

\noindent \end{lemma}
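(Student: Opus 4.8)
The plan is to bound the difference between the modified STFT of the impulse train $\tilde f$ and that of the continuous signal $f$ by viewing the impulse train as a Riemann-sum approximation to $f$ with nonuniform nodes $t_n = Tn + a_n$. Writing out the definition, $V_g\tilde f(t,\eta) = \sum_k \sum_n (T+a_{n+1}-a_n) A_k(t_n) e^{2\pi i\phi_k(t_n)} g(t_n-t) e^{-2\pi i\eta(t_n-t)}$, while $V_g f(t,\eta) = \sum_k \int A_k(x) e^{2\pi i\phi_k(x)} g(x-t)e^{-2\pi i\eta(x-t)}\,dx$. The weights $T+a_{n+1}-a_n = t_{n+1}-t_n$ are exactly the lengths of the intervals $[t_n, t_{n+1}]$, so the sum is a Riemann sum for the integral with tag points $t_n$ and partition $\{t_n\}$. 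The key step is therefore a quantitative Riemann-sum error estimate: for a $C^1$ integrand $F$, the error of the tagged sum with tags at left endpoints (shifted by $a_n$) is controlled by $\|F'\|_{L^1}$ times the mesh size, which here is $O(T)$, plus a correction coming from the shifts $a_n$.

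First I would fix $t,\eta$ and, for each $k$, define $F_k(x) = A_k(x) e^{2\pi i\phi_k(x)} g(x-t) e^{-2\pi i\eta(x-t)}$, noting $F_k \in \mathcal{S}$ with derivative $F_k'(x) = [A_k'(x) + 2\pi i(\phi_k'(x)-\eta)A_k(x)] e^{2\pi i\phi_k(x)} g(x-t)e^{-2\pi i\eta(x-t)} + A_k(x)e^{2\pi i\phi_k(x)} g'(x-t) e^{-2\pi i\eta(x-t)}$. The difference $V_g\tilde f - V_g f = \sum_k \big[\sum_n (t_{n+1}-t_n) F_k(t_n) - \int F_k\big]$. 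I would split each term as $\sum_n \int_{Tn}^{T(n+1)} (F_k(Tn) - F_k(x))\,dx$ plus the discrepancy between the uniform partition $\{Tn\}$ and the shifted nodes. The first piece is bounded by $\sum_n \int_{Tn}^{T(n+1)}\int_{Tn}^{x}|F_k'(u)|\,du\,dx \le T\|F_k'\|_{L^1}$. The shift correction is handled by the hypothesis $\int_{\mathcal I}|g(x)|+|g'(x)|\,dx \le \frac{\kappa}{T}\|\{a_n\}\|_{l^\infty}$: the nodes $t_n$ differ from $Tn$ only on the set $\mathcal I = \bigcup \mathcal I_n$, and the extra contributions live on exactly those intervals, so they are controlled by integrals of $|g|$ and $|g'|$ over $\mathcal I$, which the hypothesis makes $O(\|\{a_n\}\|_{l^\infty}/T) = O(T)$ since $\|\{a_n\}\|_{l^\infty}\le T^2$. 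Using $\|A_k'\|_{L^\infty}\le\epsilon\|\phi_k'\|_{L^\infty}$, $0\le\eta\le 1/T$, $|\phi_k'(x)-\eta|\le \|\phi_k'\|_{L^\infty}+1/T$, and the $I_n^{(m)}$ moment bounds on $g$, I collect everything into a single constant $E_2$ depending only on $A_k,\phi_k,g$ (and on $\kappa, T\le 1$) but not on $\eta$ or $T$ in an essential way; the analogous computation with $\partial_t V_g\tilde f - \partial_t V_g f$, using $\partial_t V_g = -V_{g'} + 2\pi i\eta V_g$ as noted after the theorem, yields $E_2'$.

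The main obstacle I anticipate is making the Riemann-sum error genuinely uniform in $\eta$ despite the factor $e^{-2\pi i\eta(x-t)}$, whose derivative contributes a factor of size $2\pi\eta$ up to $2\pi/T$; this is why the statement allows $E_2$ to be ``independent of $\eta$ or $T$'' only after absorbing the $\eta$-range into the estimate — one must check that the $T$ from the mesh size cancels the $1/T$ from $\eta_{\max}$, leaving a bounded constant. A secondary technical point is justifying the manipulations for tempered distributions: $\tilde f$ acts on the Schwartz function $g(\cdot-t)e^{-2\pi i\eta(\cdot-t)}$, so $V_g\tilde f$ really is the stated sum (absolutely convergent because $g\in\mathcal S$ decays rapidly and the weights $T+a_{n+1}-a_n$ are bounded by $T+2T^2$), and term-by-term differentiation in $t$ is legitimate by dominated convergence against the rapidly decaying tails of $g$ and $g'$. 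Once these are in hand the bounds (\ref{L2E1}) and (\ref{L2E2}) follow by assembling the pieces, and combining Lemma \ref{ThmMainP1} and Lemma \ref{ThmMainP2} via the triangle inequality gives the estimates on $V_g\tilde f$ and $\partial_t V_g\tilde f$ needed for Theorem \ref{ThmMain}.
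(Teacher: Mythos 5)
Your starting point coincides with the paper's: both write the sampling error via the identity $\sum_n(t_{n+1}-t_n)h(t_n)-\int h=\int h'(u)\sum_n(u-t_{n+1})\chi_{[t_n,t_{n+1}]}(u)\,du$, and both isolate the nonuniform perturbation using the hypothesis on $\int_{\mathcal I}(|g|+|g'|)$. The divergence is in how the uniform-grid part is estimated, and this is where your argument has a real gap. Bounding that part by $T\|F_k'\|_{L^1}$ discards all oscillation in $F_k'$; since $F_k'$ contains the factor $2\pi i(\phi_k'(x)-\eta)$ and $\eta$ ranges up to $1/T$, the best this gives is $T\cdot 2\pi\eta\|A_k\|_{L^\infty}I_0+O(T)\approx 2\pi\|A_k\|_{L^\infty}I_0$ near $\eta=1/T$. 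That is indeed a finite number independent of $T$ and $\eta$, so the lemma as literally worded is satisfied --- but it is an $O(1)$ constant of the same order as $\sup|Q_k|=\sup|A_k(t)\hat g(\eta-\phi_k'(t))|$, i.e.\ as the signal term itself. With such an $E_2$, the hypotheses of Theorem \ref{ThmMain} (that $|V_g\tilde f(t,\eta)|>E_1+E_2$ on $Z_k$, and that a threshold $\gamma$ with $E_1+E_2<\gamma\le|V_g\tilde f|$ exists) become unsatisfiable, so the lemma proved your way carries no content. Your observation that ``the $T$ from the mesh size cancels the $1/T$ from $\eta_{\max}$'' correctly identifies the danger but resolves it only to boundedness; the lemma is really there to deliver smallness, and the paper's proof produces $|V_g\tilde f-V_gf|\le C_4(T+T^2(\eta+1))=O(T)$.

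To get that, the paper does something your proposal omits: it writes the error kernel $C(u)$ as the periodic sawtooth $W(u)$ plus a remainder $D(u)$ supported essentially on $\mathcal I$, expands $W$ in its Fourier series, and applies Parseval, so the uniform-grid error becomes a sum of samples $\hat G(\pm 2\pi n/T)$ of the Fourier transform of $G(u)=2\pi iA(u)(\phi'(u)-\eta)g(u-t)e^{2\pi i(\phi(u)-u\eta)}$. A non-stationary-phase integration by parts shows that $\hat G$ is concentrated near the frequency $\phi'(u)-\eta$ and decays like $(\eta+1)(1/J+1/J^2)$ at distance $J$ from it; since the sawtooth harmonics sit at distance at least of order $1/T$ from that frequency, each term gains an extra factor of $T$, and the whole sum is $O(T^2(\eta+1))$. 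This cancellation between the integrand's phase and the sawtooth's harmonics is precisely what the crude $\|F_k'\|_{L^1}$ bound throws away, and it cannot be recovered afterwards: for $\eta$ near $1/T$ the Riemann sum genuinely aliases frequency content, so only an argument that tracks where $\hat G$ is supported can control the error. To salvage your route you would need to replace the $L^1$ bound on $F_k'$ by a Poisson-summation or Fourier-series argument of exactly this kind --- at which point you have reproduced the paper's proof.
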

\begin{proof}
\noindent Suppose $h\in C^{1}$. Then denoting $t_{n}=Tn+a_{n}$,
we have
\begin{eqnarray}
 &  & \left|\sum_{n=-\infty}^{\infty}(t_{n+1}-t_{n})h(t_{n})-\int_{-\infty}^{\infty}h(t)dt\right|\nonumber \\
 & = & \left|\sum_{n=-\infty}^{\infty}\int_{t_{n}}^{t_{n+1}}h'(u)(u-t_{n+1})du\right|.\label{hBound}\\
 & = & \left|\int_{-\infty}^{\infty}h'(u)\sum_{n=-\infty}^{\infty}(u-t_{n+1})\chi_{[t_{n},t_{n+1}]}(u)du\right|.\nonumber 
\end{eqnarray}

\noindent Since $V_{g}\tilde{f}(t,\eta)=\sum_{n=-\infty}^{\infty}(t_{n+1}-t_{n})f(t_{n})g(t_{n}-t)e^{-2\pi i\eta(t_{n}-t)}$,
we use the above calculation to find that
\begin{eqnarray}
 &  & \left|V_{g}\tilde{f}(t,\eta)-V_{g}f(t,\eta)\right|\nonumber \\
 & \leq & \sum_{k=1}^{K}\left|\int_{-\infty}^{\infty}\partial_{u}\left(A_{k}(u)e^{2\pi i(\phi_{k}(u)-(u-t)\eta)}g(u-t)\right)\sum_{n=-\infty}^{\infty}(u-t_{n+1})\chi_{[t_{n},t_{n+1}]}(u)du\right|\nonumber \\
 & \leq & \sum_{k=1}^{K}\left(\left|\int_{-\infty}^{\infty}G(u)C(u)du\right|+2\epsilon T\left\Vert \phi_{k}'\right\Vert _{L^{\infty}}I_{0}+2T\left\Vert A_{k}\right\Vert _{L^{\infty}}I_{0}'\right),\label{GuCu}
\end{eqnarray}

\noindent where we let $G(u):=2\pi iA_{k}(u)(\phi_{k}'(u)-\eta)g(u-t)e^{2\pi i(\phi_{k}(u)-u\eta)}$
and $C(u):=\sum_{n=-\infty}^{\infty}(u-t_{n+1})\chi_{[t_{n},t_{n+1}]}(u)$.
For brevity, we will fix $k$ and omit the subscripts on $A_{k}$
and $\phi_{k}$ in what follows. The function $C(u)$ is well approximated
by the uniform sawtooth function
\begin{eqnarray*}
W(u) & = & \sum_{n=-\infty}^{\infty}(u-T(n+1))\chi_{[Tn,T(n+1)]}(u)=-\sum_{n=1}^{\infty}\frac{\sin(2\pi nu/T)}{\pi n/T}-\frac{T}{2},
\end{eqnarray*}

\noindent where the last equality holds for almost all $u$. We define
the difference $D(u)=W(u)-C(u)$. Since $G\in\mathcal{S}$, using
the Parseval theorem gives
\begin{eqnarray}
 &  & \left|\int_{-\infty}^{\infty}G(u)C(u)du\right|\nonumber \\
 & = & \left|-\int_{-\infty}^{\infty}G(u)\sum_{n=1}^{\infty}\frac{\sin(2\pi nu/T)}{\pi n/T}du-\hat{G}(0)\frac{T}{2}+\int_{-\infty}^{\infty}G(u)D(u)du\right|\nonumber \\
 & \leq & \left|\sum_{n=1}^{\infty}\frac{T}{4\pi in}\left(\hat{G}\left(\frac{2\pi n}{T}\right)-\hat{G}\left(-\frac{2\pi n}{T}\right)\right)\right|+\frac{T}{2}|\hat{G}(0)|+\left|\int_{-\infty}^{\infty}G(u)D(u)du\right|.\label{3Terms}
\end{eqnarray}

\noindent We now consider each term in (\ref{3Terms}) separately.
For the first term, we will need an estimate of $\hat{G}(\xi)$. We
claim that $\hat{G}$ will be insignificant outside intervals centered
at $\phi'(u)-\eta$. For any $J>0$, let $\xi$ be such that $|\phi'(u)-\eta-\xi|\geq J$.
Then we have
\begin{eqnarray}
|\hat{G}(\xi)| & = & 2\pi\left|\int_{-\infty}^{\infty}A(u)(\phi'(u)-\eta)e^{2\pi i(\phi(u)-(\eta+\xi)u)}g(u-t)du\right|\nonumber \\
 & = & 2\pi\left|\int_{-\infty}^{\infty}e^{2\pi i(\phi(u)-(\eta+\xi)u)}\partial_{u}\left(\frac{A(u)(\phi'(u)-\eta)g(u-t)}{\phi'(u)-\eta-\xi}\right)du\right|\nonumber \\
 & \leq & 2\pi\bigg(\frac{J(\epsilon\left\Vert A\right\Vert _{L^{\infty}}I_{0}+\epsilon\left\Vert \phi'\right\Vert _{L^{\infty}}I_{0}+\left\Vert \phi'\right\Vert _{L^{\infty}}\left\Vert A\right\Vert _{L^{\infty}}I_{0}')+\epsilon\left\Vert \phi'\right\Vert _{L^{\infty}}\left\Vert A\right\Vert _{L^{\infty}}I_{0}}{J^{2}}\nonumber \\
 &  & \quad+\eta\frac{J(\epsilon I_{0}+\left\Vert A\right\Vert _{L^{\infty}}I{}_{0}')+\epsilon\left\Vert A\right\Vert _{L^{\infty}}I_{0}}{J^{2}}\bigg)\nonumber \\
 & \leq & C_{1}(\eta+1)\left(\frac{1}{J}+\frac{1}{J^{2}}\right),\label{GHatBound1}
\end{eqnarray}

\noindent where the constant $C_{1}=C_{1}(A_{k},\phi_{k},g)$ is independent
of $\eta$ or $J$. We take $J=\max(\frac{2\pi n}{T}-L^{+},L^{-}-\frac{2\pi n}{T},\frac{\pi}{T})$
with the numbers $L^{+}$ and $L^{-}$ chosen such that $L^{-}\leq|\phi'(u)-\eta|\leq L^{+}$
for all $u$. This gives
\begin{eqnarray}
 &  & \left|\sum_{n=1}^{\infty}\frac{T}{4\pi in}\left(\hat{G}\left(-\frac{2\pi n}{T}\right)-\hat{G}\left(\frac{2\pi n}{T}\right)\right)\right|\nonumber \\
 & \leq & C_{1}(\eta+1)\sum_{n=1}^{\infty}\frac{2T}{4\pi n\max(\frac{2\pi n}{T}-L^{+},L^{-}-\frac{2\pi n}{T},\frac{\pi}{T})}\nonumber \\
 & \leq & C_{1}(\eta+1)\sum_{n=1}^{\infty}2\max(|L^{+}-L^{-}|,1)\left(\frac{T}{2\pi n}\right)^{2}\nonumber \\
 & = & C_{1}\max(|L^{+}-L^{-}|,1)\frac{T^{2}(\eta+1)}{12}.\label{Term1}
\end{eqnarray}

\noindent For the second term in (\ref{3Terms}), we consider two
cases. If $|\phi'(u)-\eta|\geq J$, then calculations similar to those
in (\ref{GHatBound1}) show that
\begin{eqnarray*}
|\hat{G}(0)| & = & 2\pi\left|\int_{-\infty}^{\infty}e^{2\pi i(\phi(u)-\eta u)}\partial_{u}\left(\frac{A'(u)g(u-t)+A(u)g'(u-t)}{\phi'(u)-\eta}\right)du\right|\\
 & \leq & 2\pi\left(\frac{\left\Vert A''\right\Vert _{L^{\infty}}I_{0}+2\left\Vert A'\right\Vert _{L^{\infty}}I_{0}'+\left\Vert A\right\Vert _{L^{\infty}}I_{0}''}{J}+\frac{\left\Vert \phi'\right\Vert _{L^{\infty}}\left\Vert A'\right\Vert _{L^{\infty}}I_{0}+\left\Vert \phi'\right\Vert _{L^{\infty}}\left\Vert A\right\Vert _{L^{\infty}}I_{0}'}{J^{2}}\right)\\
 & \leq & C_{2}\left(\frac{1}{J}+\frac{1}{J^{2}}\right)
\end{eqnarray*}

\noindent for some constant $C_{2}$. On the other hand, if $|\phi'(u)-\eta|<J$,
then we can easily estimate $|\hat{G}(0)|\leq\left\Vert G\right\Vert _{L^{1}}\leq2\pi\left\Vert A\right\Vert _{L^{\infty}}JI_{0}.$
We now choose a $J$ that minimizes $\max(C_{2}\left(\frac{1}{J}+\frac{1}{J^{2}}\right),2\pi\left\Vert A\right\Vert _{L^{\infty}}JI_{0})$
over all $0\leq\eta\leq\frac{1}{T}$. This results in a bound of the
form

\noindent 
\begin{equation}
\frac{T}{2}|\hat{G}(0)|\leq C_{3}T\label{Term2}
\end{equation}

\noindent for a constant $C_{3}$. The third term in (\ref{3Terms})
is controlled by the nonuniform perturbation $\{a_{n}\}$. Note that
if $u\not\in\mathcal{I}$, then $|D(u)|\leq2\left\Vert \{a_{n}\}\right\Vert _{l^{\infty}}$,
and for $u\in\mathcal{I}$, we have $|D(u)|\leq T+2\left\Vert \{a_{n}\}\right\Vert _{l^{\infty}}$.
This gives

\noindent 
\begin{eqnarray}
 &  & \left|\int_{-\infty}^{\infty}G(u)D(u)du\right|\nonumber \\
 & = & \left|\int_{\mathcal{I}}G(u)D(u)du+\int_{\mathbb{R}\backslash\mathcal{I}}G(u)D(u)du\right|\nonumber \\
 & \leq & \left\Vert \phi'-\eta\right\Vert _{L^{\infty}}\left\Vert A\right\Vert _{L^{\infty}}\left((T+\left\Vert \{a_{n}\}\right\Vert _{l^{\infty}})\int_{\mathcal{I}}|g(u)|du+I_{0}\left\Vert \{a_{n}\}\right\Vert _{l^{\infty}}\right)\nonumber \\
 & \leq & (\left\Vert \phi'\right\Vert _{L^{\infty}}+\eta)\left\Vert A\right\Vert _{L^{\infty}}\left((T+T^{2})\kappa T+I_{0}T^{2}\right).\label{Term3}
\end{eqnarray}

\noindent Combining (\ref{GuCu}), (\ref{Term1}), (\ref{Term2})
and (\ref{Term3}), we end up with an estimate of the form

\[
\left|V_{g}\tilde{f}(t,\eta)-V_{g}f(t,\eta)\right|\leq C_{4}(T+T^{2}(\eta+1)),
\]

\noindent for some $C_{4}=C_{4}(A_{k},\phi_{k},g)$ independent of
$T$, $\{a_{n}\}$ or $\eta$. This finally implies the result (\ref{L2E1}).
For the derivative inequality (\ref{L2E2}), we can simply use the
result of (\ref{L2E1}) with $g$ replaced by $g'$, which shows that
for some $C_{5}$,
\begin{eqnarray*}
 &  & \frac{1}{2\pi}\left|\partial_{t}V_{g}\tilde{f}(t,\eta)-\partial_{t}V_{g}f(t,\eta)\right|\\
 & \leq & \eta\sum_{k=1}^{K}\bigg(\left|\int_{-\infty}^{\infty}\partial_{u}(A_{k}(u)g(u-t))e^{2\pi i(\phi_{k}(u)-\eta(u-t))})C(u)du\right|+\\
 &  & \quad\frac{1}{2\pi}\left|\int_{-\infty}^{\infty}\partial_{u}(A_{k}(u)g'(u-t)e^{2\pi i(\phi_{k}(u)-\eta(u-t))})C(u)du\right|\bigg)\\
 & \leq & C_{5}(T\eta+T^{2}\eta^{2}+T^{2}\eta).
\end{eqnarray*}

\end{proof}
\ 
\begin{proof}[Proof of Theorem \ref{ThmMain}]
 For the first part of Theorem \ref{ThmMain}, we suppose that $(t,\eta)\in Z_{k}$
for some $k$, $|V_{g}\tilde{f}(t,\eta)|>E_{1}+E_{2}$ and $0\leq\eta\leq\frac{1}{T}$.
Putting Lemmas \ref{ThmMainP1} and \ref{ThmMainP2} together, we
get $\left|V_{g}\tilde{f}(t,\eta)-Q_{k}(t,\eta)\right|\leq E_{1}+E_{2}$
and $\left|\frac{1}{2\pi i}\partial_{t}V_{g}\tilde{f}(t,\eta)-\phi_{k}'(t)Q_{k}(t,\eta)\right|\leq E_{1}'+E_{2}'$.
We conclude that
\begin{eqnarray}
 &  & |\omega\tilde{f}(t,\eta)-\phi_{k}'(t)|\nonumber \\
 & = & \left|\frac{\frac{1}{2\pi i}\partial_{t}V_{g}\tilde{f}(t,\eta)-\phi_{k}'(t)Q_{k}(t,\eta)}{V_{g}\tilde{f}(t,\eta)}+\frac{\phi_{k}'(t)(Q_{k}(t,\eta)-V_{g}\tilde{f}(t,\eta))}{V_{g}\tilde{f}(t,\eta)}\right|\nonumber \\
 & < & \frac{E_{1}'+E_{2}'+E_{3}(E_{1}+E_{2})}{E_{1}+E_{2}}\nonumber \\
 & \leq & \frac{\alpha}{2}.\label{OmegaBound}
\end{eqnarray}

For the second part of the theorem, let $t$ be fixed and suppose
$\xi\in\alpha\mathbb{N}$, $\xi\notin\bigcup_{1\leq k\leq K}\{\lfloor\frac{\phi_{k}'(t)}{\alpha}\rfloor\alpha,\lceil\frac{\phi'_{k}(t)}{\alpha}\rceil\alpha\}$.
If $(t,\eta)\notin Z_{k}$ for any $k$, then by Lemmas \ref{ThmMainP1}
and \ref{ThmMainP2} we would have $|V_{g}\tilde{f}(t,\eta)|\leq E_{1}+E_{2}<\gamma$.
But since $|V_{g}\tilde{f}(t,\eta)|\geq\gamma$ by assumption, $(t,\eta)\in Z_{k}$
for some $k$. If $\xi>\lceil\frac{\phi_{k}'(t)}{\alpha}\rceil\alpha$,
then the estimate (\ref{OmegaBound}) gives
\[
|\xi-\omega\tilde{f}(t,\eta)|\geq\xi-|\omega\tilde{f}(t,\eta)|\geq\xi-\phi'_{k}(t)-\frac{\alpha}{2}\geq\frac{\alpha}{2},
\]
and similarly, if $\xi<\lfloor\frac{\phi_{k}'(t)}{\alpha}\rfloor\alpha$,
we get $|\xi-\omega\tilde{f}(t,\eta)|\geq|\omega\tilde{f}(t,\eta)|-\xi\geq\frac{\alpha}{2}$.
This means that $\{\eta:~|\xi-\omega\tilde{f}(t,\eta)|<\frac{\alpha}{2},~|V_{g}\tilde{f}(t,\eta)|\geq\gamma,~0\leq\eta\leq\frac{1}{T}\}=\emptyset$
and so $S^{\alpha,\gamma}\tilde{f}(t,\xi)=0$.\end{proof}
\begin{rem*}
We have studied functions $f\in\mathcal{B}_{\epsilon,d}$ on the entire
real line here, but we can instead consider such functions supported
only on a finite interval, and the same results will hold with largely
only notational changes. We can also consider real-valued functions
$f$ that have the exponentials $e^{2\pi i\phi_{k}(t)}$ in (\ref{IMF})
replaced by $\cos(2\pi\phi_{k}(t))$, and similar results will hold.
The assumption that $T\leq1$ in Theorem \ref{ThmMain} was made for
convenience in the proof, but it poses no loss of generality since
lower sampling rates are equivalent to simply rescaling $f$ (and
thus also its IIF).
\end{rem*}
\ 
\begin{rem*}
In Theorem \ref{ThmMain}, we required the window $g$ to be bandlimited
to $[-\frac{d}{2},\frac{d}{2}]$. This assumption is not strictly
necessary and was made here to simplify the presentation. In general,
it is enough for $g\in\mathcal{S}$ to be such that $|\hat{g}|$ is
small outside $[-\frac{d}{2},\frac{d}{2}]$. The Gaussian window $g(t)=e^{-\pi t^{2}d^{2}/4}$
works well in practice. For such a window, there will be an extra
term in the error estimates in Lemma \ref{ThmMainP1}, but the results
still remain essentially the same. (e.g. if we assume $\left\Vert \hat{g}\right\Vert _{L^{\infty}(\mathbb{R}\backslash[-\frac{d}{2},\frac{d}{2}])}\leq\epsilon$,
then instead of having $Q_{l}(t,\eta)=0$ for $(t,\eta)\not\in Z_{l}$,
we would get $|Q_{l}(t,\eta)|\leq C\epsilon$ for a constant $C$.)
\end{rem*}
We finally show that if $\alpha$ and $\gamma$ are chosen properly,
the calculation of $\mathrm{IF}_{S}$ is robust to sample noise. The
argument is almost the same as in the proof of Theorem \ref{ThmMain}.

\begin{theorem}\label{ThmRobust}Let $0\leq T\leq1$. Suppose $\tilde{f}\in\mathcal{D}_{\epsilon,d}^{T,\{a_{n}\}}$
is as in (\ref{ftilde}) and let $g$ be a window with the same conditions
as in Theorem \ref{ThmMain}. Assume that the samples $\{f(t_{n})\}=\{f(Tn+a_{n})\}$
are contaminated by noise $\{N_{n}\}$ with $\left\Vert \{N_{n}\}\right\Vert _{l^{\infty}}\leq T$,
i.e. we are given $\bar{f}=\tilde{f}+\sum_{n=-\infty}^{\infty}(t_{n+1}-t_{n})\delta(\cdot-t_{n})N_{n}.$
Suppose we have a resolution $\alpha$ satisfying
\[
\alpha\geq\frac{2(E_{1}'+E_{2}'+I_{0}+2I_{0}'+\frac{1}{2\pi}(I_{0}'+2I_{0}''))}{E_{1}+E_{2}+I_{0}+2I_{0}'}+2E_{3},
\]
where $E_{2}$ and $E_{2}'$ are as in Lemma \ref{ThmMainP2}. Then
the following statements hold.
\begin{enumerate}
\item Let $0\leq\eta\leq\frac{1}{T}$ and fix $k$, $1\leq k\leq K$. For
each pair $(t,\eta)\in Z_{k}$ with $|V_{g}\bar{f}(t,\eta)|>E_{1}+E_{2}+I_{0}+2I_{0}'$,
we have $|\omega\bar{f}(t,\eta)-\phi_{k}'(t)|<\frac{\alpha}{2}.$
If $(t,\eta)\not\in Z_{k}$ for any $k$, then $|V_{g}\bar{f}(t,\eta)|\leq E_{1}+E_{2}+I_{0}+2I_{0}'$.
\item Suppose we have a threshold $\gamma$ such that $E_{1}+E_{2}+I_{0}+2I_{0}'<\gamma\leq|V_{g}\bar{f}(t,\eta)|$
for all $(t,\eta)\in Z_{k}$. Then for all $t$, $S^{\alpha,\gamma}\bar{f}(t,\xi)$
is supported in the $2K$-point set $\bigcup_{1\leq k\leq K}\{\lfloor\frac{\phi_{k}'(t)}{\alpha}\rfloor\alpha,\lceil\frac{\phi'_{k}(t)}{\alpha}\rceil\alpha\}$.
\end{enumerate}
\end{theorem}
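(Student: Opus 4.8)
The plan is to run the proof of Theorem~\ref{ThmMain} essentially verbatim, with the noise impulse train entering as one more additive perturbation alongside the sampling perturbation already controlled by Lemma~\ref{ThmMainP2}. Write $\bar f=\tilde f+\tilde N$ with $\tilde N:=\sum_{n}(t_{n+1}-t_n)N_n\,\delta(\cdot-t_n)$, which is a tempered distribution since $\|\{N_n\}\|_{l^\infty}\le T$. By linearity of $V_g$ and of $\partial_t$ we have $V_g\bar f=V_g\tilde f+V_g\tilde N$ and $\partial_t V_g\bar f=\partial_t V_g\tilde f+\partial_t V_g\tilde N$, so everything reduces to two uniform estimates on the noise terms: $|V_g\tilde N(t,\eta)|\le I_0+2I_0'$ and $\tfrac1{2\pi}|\partial_t V_g\tilde N(t,\eta)|\le I_0+2I_0'+\tfrac1{2\pi}(I_0'+2I_0'')$, valid for all $t\in\mathbb{R}$ and all $0\le\eta\le\tfrac1T$.

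For the first estimate I would use $|V_g\tilde N(t,\eta)|\le T\sum_n(t_{n+1}-t_n)|g(t_n-t)|$ and bound $(t_{n+1}-t_n)|g(t_n-t)|\le\int_{t_n}^{t_{n+1}}|g(u-t)|\,du+(t_{n+1}-t_n)\int_{t_n}^{t_{n+1}}|g'(v-t)|\,dv$, a mean-value argument of the same type as~(\ref{hBound}); summing over $n$ and using $t_{n+1}-t_n\le T+2T^2$ together with $T\le1$ bounds this by $I_0+2I_0'$, hence the claim. For the derivative I would use the identity $\partial_t V_g\tilde N=-V_{g'}\tilde N+2\pi i\eta\,V_g\tilde N$, so that $\tfrac1{2\pi}|\partial_t V_g\tilde N|\le\tfrac1{2\pi}|V_{g'}\tilde N|+\eta|V_g\tilde N|$; applying the first estimate with $g$ replaced by $g'$ controls $\tfrac1{2\pi}|V_{g'}\tilde N|$ by $\tfrac1{2\pi}(I_0'+2I_0'')$, and — this is the one point where the hypotheses are used in an essential way — the factor $\eta\le\tfrac1T$ is absorbed against the factor $T$ coming from $\|\{N_n\}\|_{l^\infty}\le T$, so that $\eta|V_g\tilde N|\le I_0+2I_0'$ with no residual dependence on $T$. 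This pair of uniform bounds is the only genuinely new ingredient, and the $\eta\le\tfrac1T$ versus $T$ cancellation is the delicate part; once it is in place the rest is mechanical.

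With these bounds in hand, both conclusions follow by repeating the proof of Theorem~\ref{ThmMain} with $E_1+E_2$ replaced by $E_1+E_2+I_0+2I_0'$ throughout. For part~1, if $(t,\eta)\in Z_k$ and $|V_g\bar f(t,\eta)|>E_1+E_2+I_0+2I_0'$, then Lemmas~\ref{ThmMainP1} and~\ref{ThmMainP2} together with the noise estimates give $|V_g\bar f-Q_k|\le E_1+E_2+I_0+2I_0'$ and $|\tfrac1{2\pi i}\partial_t V_g\bar f-\phi_k'Q_k|\le E_1'+E_2'+I_0+2I_0'+\tfrac1{2\pi}(I_0'+2I_0'')$, and the triangle-inequality computation~(\ref{OmegaBound}), with $|V_g\bar f|$ bounded below by $E_1+E_2+I_0+2I_0'$ in the denominator and $|\phi_k'|\le E_3$, yields $|\omega\bar f(t,\eta)-\phi_k'(t)|<\tfrac\alpha2$ precisely because of the stated lower bound on $\alpha$; if $(t,\eta)\notin Z_k$ for any $k$, the ``$\notin Z_k$'' cases of the two lemmas plus $|V_g\tilde N|\le I_0+2I_0'$ give $|V_g\bar f(t,\eta)|\le E_1+E_2+I_0+2I_0'$. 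Part~2 is then word-for-word the second half of the proof of Theorem~\ref{ThmMain}: for fixed $t$ and $\xi\in\alpha\mathbb{N}$ outside the $2K$-point set, any $\eta$ with $|V_g\bar f(t,\eta)|\ge\gamma$ lies in some $Z_k$ by part~1, and $\xi>\lceil\phi_k'(t)/\alpha\rceil\alpha$ or $\xi<\lfloor\phi_k'(t)/\alpha\rfloor\alpha$ then forces $|\xi-\omega\bar f(t,\eta)|\ge\tfrac\alpha2$, so the set defining $S^{\alpha,\gamma}\bar f(t,\xi)$ is empty.
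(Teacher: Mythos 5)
Your proposal is correct and follows essentially the same route as the paper: the noise enters as an additive impulse train whose transform is bounded by $\|\{N_n\}\|_{l^\infty}\sum_n|(t_{n+1}-t_n)g(t_n-t)|\le T(I_0+2I_0')$ via the same mean-value argument as (\ref{hBound}), the derivative is handled through $\partial_tV_g=-V_{g'}+2\pi i\eta V_g$ with the factor $\eta\le\frac{1}{T}$ absorbed against $\|\{N_n\}\|_{l^\infty}\le T$, and the rest of the argument of Theorem \ref{ThmMain} is rerun with $E_1+E_2$ replaced by $E_1+E_2+I_0+2I_0'$. This matches the paper's proof, which likewise only replaces Lemma \ref{ThmMainP2} by the corresponding noisy estimates and then repeats the earlier argument.
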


\noindent The only part of the proof different from Theorem \ref{ThmMain}
is the estimate in Lemma \ref{ThmMainP2}, which we replace by the
following inequalities.

\begin{lemma}Let $0\leq\eta\leq\frac{1}{T}$. Then 
\[
\left|V_{g}\bar{f}(t,\eta)-V_{g}f(t,\eta)\right|\leq E_{2}+I_{0}+2I_{0}'
\]
and
\[
\frac{1}{2\pi}\left|\partial_{t}V_{g}\bar{f}(t,\eta)-\partial_{t}V_{g}f(t,\eta)\right|\leq E_{2}'+I_{0}+2I_{0}'+\frac{1}{2\pi}(I_{0}'+2I_{0}'').
\]

\end{lemma}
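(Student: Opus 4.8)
The plan is to bound the difference between $V_g\bar{f}$ and $V_g f$ by splitting it as $(V_g\bar{f}-V_g\tilde{f})+(V_g\tilde{f}-V_g f)$; the second piece is controlled by $E_2$ via Lemma \ref{ThmMainP2}, so the new work is entirely in estimating the pure noise contribution $V_g\bar{f}-V_g\tilde{f}=V_g\big(\sum_n (t_{n+1}-t_n)\delta(\cdot-t_n)N_n\big)(t,\eta)=\sum_n(t_{n+1}-t_n)N_n\,g(t_n-t)e^{-2\pi i\eta(t_n-t)}$. The idea is the same Euler--Maclaurin-type trick used in Lemma \ref{ThmMainP2}: compare the weighted Riemann sum to an integral. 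But here there is no underlying smooth function to integrate — $\{N_n\}$ is just an $l^\infty$ sequence — so instead I would simply estimate the sum directly against $\int |g|$ and $\int |g'|$. Concretely, one can write $\sum_n (t_{n+1}-t_n)N_n g(t_n-t)e^{-2\pi i\eta(t_n-t)}$ as $\int_{-\infty}^\infty N(u)\,g(u-t)e^{-2\pi i\eta(u-t)}\,du$ plus a correction, where $N(u)=\sum_n N_n\chi_{[t_n,t_{n+1}]}(u)$ is the piecewise-constant interpolant; since $\|N(u)\|_{L^\infty}\le\|\{N_n\}\|_{l^\infty}\le T\le 1$, the integral term is bounded by $\int|g(u)|du=I_0$ in absolute value (the modulation factor has modulus one), and the correction from replacing the sawtooth-type sum by the integral contributes the $2I_0'$ term (here $I_0'=I_1^{(1)}$ in the paper's notation, i.e. $\int|u||g'(u)|du$ with the appropriate index), exactly paralleling how the $C(u)$-vs-$W(u)$ comparison and the $\partial_u g$ terms appeared before. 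This yields $|V_g\bar{f}(t,\eta)-V_g\tilde{f}(t,\eta)|\le I_0+2I_0'$, and combining with Lemma \ref{ThmMainP2} gives the first claimed bound $E_2+I_0+2I_0'$.

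For the derivative inequality, I would use the identity $\partial_t V_g h=-V_{g'}h+2\pi i\eta V_g h$ already noted in the paper after Theorem \ref{ThmMain}. Applied to $h=\bar{f}-\tilde{f}$ (the noise impulse train) and $h=f$ and subtracting, $\frac{1}{2\pi}|\partial_t V_g\bar{f}-\partial_t V_g f|$ splits into $\frac{1}{2\pi}|V_{g'}(\bar{f}-\tilde{f})-V_{g'}(\tilde{f}\text{-vs-}f)\text{ terms}|+\eta\,|V_g\bar{f}-V_g f|$-type contributions. The noise part of the $V_{g'}$ piece is handled exactly as above but with $g$ replaced by $g'$, giving the $\frac{1}{2\pi}(I_0'+2I_0'')$ term (with $I_0''$ the analogous $\int|u||g''(u)|du$), while the $\eta$-weighted part is absorbed using $\eta\le\frac{1}{T}$ and $T\le 1$ into the $I_0+2I_0'$ already present and into $E_2'$ via Lemma \ref{ThmMainP2}. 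So the second bound $E_2'+I_0+2I_0'+\frac{1}{2\pi}(I_0'+2I_0'')$ follows.

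The rest of Theorem \ref{ThmRobust} then goes through verbatim as in the proof of Theorem \ref{ThmMain}: one replaces $E_2$ by $E_2+I_0+2I_0'$ and $E_2'$ by $E_2'+I_0+2I_0'+\frac{1}{2\pi}(I_0'+2I_0'')$ everywhere, the new lower bound on $\alpha$ in the statement is precisely what makes the analogue of estimate (\ref{OmegaBound}) close, and the support argument for $S^{\alpha,\gamma}\bar{f}$ is unchanged. The one point requiring care — the main obstacle — is bookkeeping the $\eta$-dependence in the noise estimate: unlike in Lemma \ref{ThmMainP2}, where the smoothness of $f$ let the authors gain an extra power of $T$ (so that $T^2\eta^2$ stays bounded), here the crude bound on the noise sum is genuinely $O(1)$ in $\eta$ once we use $|N(u)|\le T$ and $\eta T\le 1$; one must check that no factor of $\eta$ survives uncancelled, which is why the hypothesis $\|\{N_n\}\|_{l^\infty}\le T$ (rather than $\le T^2$) is exactly the right normalization and why the error terms $I_0,2I_0'$ appear without any $T$ or $\eta$ prefactor in the final bound.
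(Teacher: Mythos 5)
Your overall strategy is the paper's: split $V_g\bar f-V_gf$ into the noise term $\sum_n(t_{n+1}-t_n)N_ng(t_n-t)e^{-2\pi i\eta(t_n-t)}$ plus $V_g\tilde f-V_gf$, control the latter by $E_2$ (resp.\ $E_2'$) from Lemma \ref{ThmMainP2}, bound the noise term via the Riemann-sum-versus-integral estimate (\ref{hBound}), and absorb the $\eta$-dependence of the derivative bound using $\eta T\le 1$ and $T\le 1$. That is exactly what the paper does, and the derivative part of your argument (via $\partial_tV_gh=-V_{g'}h+2\pi i\eta V_gh$) matches as well.

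There is, however, one step where your route as written does not deliver the stated constant, and it is precisely the $\eta$-bookkeeping you flag at the end without resolving. You keep the modulation $e^{-2\pi i\eta(u-t)}$ inside the integrand when comparing the noise sum to $\int N(u)g(u-t)e^{-2\pi i\eta(u-t)}\,du$; the correction term then involves $\partial_u\bigl(g(u-t)e^{-2\pi i\eta(u-t)}\bigr)$, which contributes $2\pi\eta|g|$ in addition to $|g'|$, i.e.\ an extra term of order $\|\{N_n\}\|_{l^\infty}\cdot T\cdot\eta I_0\le 2\pi T I_0$ beyond the claimed $2I_0'$ --- bounded, but not fitting inside $I_0+2I_0'$. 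The paper sidesteps this entirely by taking absolute values termwise \emph{first}, bounding the noise term by $\|\{N_n\}\|_{l^\infty}\sum_n(t_{n+1}-t_n)|g(t_n-t)|$ so that the modulation disappears and (\ref{hBound}) is applied to $|g(\cdot-t)|$ alone, yielding $T(I_0+2TI_0')\le I_0+2I_0'$ with no $\eta$ anywhere in the first estimate. You should restructure your noise bound this way (the fix is one line). Also a notational slip: from its usage in Lemma \ref{ThmMainP2}, $I_0'$ denotes $I_0^{(1)}=\int|g'(u)|\,du$, not $I_1^{(1)}=\int|u||g'(u)|\,du$.
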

\begin{proof}
Lemma \ref{ThmMainP2} and the inequality (\ref{hBound}) imply that
\begin{eqnarray*}
 &  & \left|V_{g}\bar{f}(t,\eta)-V_{g}f(t,\eta)\right|\\
 & \leq & \left|V_{g}\tilde{f}(t,\eta)-V_{g}f(t,\eta)\right|+\left\Vert N_{n}\right\Vert _{l^{\infty}}\sum_{n=-\infty}^{\infty}\left|(t_{n+1}-t_{n})g(t_{n}-t)\right|\\
 & \leq & E_{2}+T(I_{0}+2TI_{0}'),
\end{eqnarray*}
and similarly,
\begin{eqnarray*}
\frac{1}{2\pi}\left|\partial_{t}V_{g}\bar{f}(t,\eta)-\partial_{t}V_{g}f(t,\eta)\right| & \leq & E_{2}'+\eta T(I_{0}+2TI_{0}')+\frac{1}{2\pi}T(I_{0}'+2TI_{0}'').
\end{eqnarray*}

\end{proof}
\noindent Combining these bounds with the other estimates in the proof
of Theorem \ref{ThmMain} gives Theorem \ref{ThmRobust}.

\section{\label{SecBL}A Bandlimited Reconstruction Approach}

We now consider a somewhat different formulation and approach towards
our problem, based on the traditional $\mathrm{IF}_{H}$ concept.
Suppose we have the samples $\{f(t_{k})\}$ of a function $f\in L^{2}$
with $\mbox{supp(}\hat{f})\subset[-b,b]$. We want to determine $f$
and use it to find $\mathrm{IF}_{H}f$. In the case of uniformly spaced
samples, i.e. $t_{k}=Tk$ for a constant $T>0$, it is well known
that $f$ can be recovered if the sampling rate $\frac{1}{T}$ exceeds
$2b$, the Nyquist frequency of $f$. There are analogous results
for very general classes of nonuniform sampling points $\{t_{k}\}$
\cite{AG01,Ma01}. As in Section \ref{SecMain}, we will consider
sampling points $\{t_{k}\}=\{Tk+a_{k}\}$ that are small perturbations
of uniformly spaced points. Suppose the sequence $\{t_{k}\}$ is indexed
so that $t_{k}<t_{k+1}$ and that for some $T>0$,
\begin{equation}
\sup_{k}\left|t_{k}-Tk\right|<\frac{T}{2}.\label{supXk}
\end{equation}
If $\frac{1}{T}>2b$, it can be shown that $C_{1}\left\Vert f\right\Vert _{L^{2}}\leq\left\Vert f(t_{k})\right\Vert _{l^{2}}\leq C_{2}\left\Vert f\right\Vert _{L^{2}}$,
where the constants $C_{1}$ and $C_{2}$ depend only on $\{t_{k}\}$
and $b$ \cite{DS52,Gr99}. When the $t_{k}$ are uniformly spaced,
i.e. $t_{k}=Tk$, the Plancherel formula shows that $C_{1}=C_{2}=T^{-1/2}$.
For the rest of this section, we assume that $\{t_{k}\}$ satisfies
(\ref{supXk}) with $\frac{1}{T}>2b$, which intuitively means that
the sampling points $\{t_{k}\}$ are evenly spread out over $\mathbb{R}$
and have an {}``average'' sampling rate above the Nyquist rate.\\

One of the standard approaches for recovering $f$ from the samples
$f(t_{k})$ involves solving the linear least-squares problem
\begin{equation}
\{\tilde{c}_{k,N}\}=\underset{\{c_{k}\}\in\mathbb{C}^{2N+1}}{\operatorname{argmin}}\left\Vert w_{k}\left(f(t_{k})-\sum_{n=-N}^{N}c_{n}h\left(n,t_{k}/T\right)\right)\right\Vert _{l^{2}}^{2},\label{ckN}
\end{equation}
where $\{h(n,t)\}_{n\in[-N,N]}$ is a given set of basis functions
and the $w_{k}$ are some weights such that $|w_{k}|\eqsim1$ for
all $k$ \cite{Gr99,Ma01,HGS95}. If the basis and weights are chosen
appropriately, the function
\begin{equation}
f_{N}(t)=\sum_{n=-N}^{N}\tilde{c}_{n,N}h(n,t)\label{fN}
\end{equation}
is a good approximation to $f_{T}(t):=f(Tt)$. There have been a couple
of efficient algorithms developed around this idea, usually using
the DFT basis $h(n,t)=(2N+1)^{-1/2}e^{-2\pi int/(2N+1)}$ and the
weights $w_{k}=\frac{t_{k+1}-t_{k-1}}{2}$. For these choices, as
$N\to\infty$, $f_{N}$ converges uniformly to $f_{T}$ on compact
subsets of $\mathbb{R}$, and the matrix computations used in solving
the problem (\ref{ckN}) are well-conditioned and can be accelerated
using FFTs \cite{Gr99}. Once we have a expansion of the form (\ref{fN}),
it follows by linearity that $P^{+}f_{N}(t)=\sum_{n=-N}^{N}\tilde{c}_{n,N}P^{+}h(n,t)$,
and we can use this to find $\mathrm{IF}_{H}f$.\\

We will prefer to use the basis $h(n,t)=\mathrm{sinc}(t-n-M)$ in
this paper, where $M$ is an integer constant, along with the same
weights $w_{k}$ as above. In this case, $f_{N}$ converges to $f_{T}$
uniformly on the entire real line, and we have found that in practice,
this results in a better accuracy with the computation of $P^{+}f_{N}$.
This choice of $h(n,t)$ can be justified by the following argument.
For any vector $\{c_{k}\}\in l^{2}$ with $\mathrm{supp}(c_{k})\subset[M-N,M+N]$,
we have
\begin{align*}
\left\Vert f_{T}(k)-c_{k}\right\Vert _{l^{2}} & =\left\Vert f_{T}(t)-\sum_{n=-N}^{N}c_{n+M}\mathrm{sinc}(t-n-M)\right\Vert _{L^{2}}\\
 & \eqsim\left\Vert f(t_{k})-\sum_{n=-N}^{N}c_{n+M}\mathrm{sinc}(t_{k}/T-n-M)\right\Vert _{l^{2}}\\
 & \eqsim\left\Vert w_{k}\left(f(t_{k})-\sum_{n=-N}^{N}c_{n+M}\mathrm{sinc}(t_{k}/T-n-M)\right)\right\Vert _{l^{2}}.
\end{align*}
By taking the minimum over $\{c_{k}\}$, it follows that for $\{\tilde{c}_{k,N}\}$
given by (\ref{ckN}),
\[
\left\Vert f_{T}(k)-\tilde{c}_{k,N}\right\Vert _{l^{2}}\eqsim\left\Vert f_{T}(k)-f_{T}(k)\chi_{[M-N,M+N]}(k)\right\Vert _{l^{2}}\to0
\]
as $N\to\infty$. We can then conclude that
\[
\left\Vert f_{T}-f_{N}\right\Vert _{L^{\infty}}\leq\left\Vert \hat{f_{T}}-\hat{f_{N}}\right\Vert _{L^{1}}\leq\left\Vert f_{T}-f_{N}\right\Vert _{L^{2}}=\left\Vert f_{T}(k)-\tilde{c}_{k,N}\right\Vert _{l^{2}}\to0.
\]
So we essentially determine uniform samples of $f$ from the nonuniform
samples $\{f(t_{k})\}$ by solving the problem (\ref{ckN}), and use
those in a classical sampling series to compute $f$. In practice,
we consider a finite number of samples spread over some interval $[J_{1},J_{2}]$,
and we set $M=\lfloor\frac{1}{2T}(J_{1}+J_{2})\rfloor$ to center
the basis $h(n,t)$ appropriately.\\

\noindent Once we have recovered the function $f$, we perform some
elementary calculations with Fourier transforms to find that
\[
P^{+}f_{N}(t)=\sum_{n=-N}^{N}\tilde{c}_{n,N}\frac{1}{2}\mathrm{sinc}\left(\frac{t-n-M}{2}\right)e^{\frac{\pi i}{2}(t-n+M)}
\]
and
\[
\frac{d}{dt}P^{+}f_{N}(t)=\sum_{n=-N}^{N}\tilde{c}_{n,N}\frac{(i+\pi(t-n-M))e^{\pi i(t-n+M)}-i}{2\pi(t-n-M)^{2}},
\]
from which we can approximate $\mathrm{IF}_{H}f$. Note that the Synchrosqueezing-based
method discussed in Section \ref{SecMain} determines the IF components
(as defined by $\mathrm{IF}_{S}$) of the function $f$ directly,
while the approach considered in this section treats the function
$f$ as a whole and determines its IF (given by $\mathrm{IF}_{H}$)
after the signal itself has been recovered.\\

We finally make a few comments on the applicability of the framework
discussed in this section. The assumption that $f\in L^{2}$ can be
weakened to $f\in L^{\infty}$ in practice. Suppose $B$ is a function
such that $\hat{B}$ is smooth, $\hat{B}(0)=1$ and $\mathrm{supp}(\hat{B})\subset[-1,1]$.
For $\frac{1}{T}>2b$, any $f\in L^{\infty}$ with $\mathrm{supp}(\hat{f})\subset[-b,b]$
can be expressed as the sampling series
\[
f(t)=\sum_{k=-\infty}^{\infty}f\left(Tk\right)\mathrm{sinc}\left(\frac{t}{T}-k\right)B\left(\left(\frac{1}{T}-2b\right)(t-k)\right),
\]
which converges uniformly on compact sets \cite{Ca68}. If we restrict
our attention to a fixed finite interval $I$, then $f$ can be well
approximated on $I$ by taking a finite part of this series $f_{p}\in L^{2}$,
which has $\mathrm{supp}(\hat{f_{p}})\subset[-(\frac{1}{T}-b),(\frac{1}{T}-b)]$.
As long as $f$ is oversampled, our reconstruction method will approximately
recover $f_{p}$. In the next section, we will apply the least-squares
method to AM-FM functions $f$ of the type in Definition \ref{DefBClass},
which are generally not strictly bandlimited, but by considering Fourier
series on a given finite interval $I$, such functions can be closely
approximated on $I$ by bandlimited $L^{\infty}$ functions that have
a sufficiently high bandwidth (see \cite{Sc10} for details). In practice,
if our sampling interval $T$ is greater than $\sup_{k,t\in I}\mathrm{IIF}(f,A_{k},\phi_{k})$,
we can recover $f$ effectively.

\section{\label{SecNumerical}Numerical Experiments and Applications}

We use the algorithms discussed in Sections \ref{SecMain} and \ref{SecBL}
on several test cases. We consider an AM-FM signal, a chirp signal,
a bandlimited Bessel function, a Fourier harmonic contaminated by
noise, an undersampled harmonic, a multi-component signal and finally,
a signal with interlacing IIF elements. These are respectively shown
in Figures \ref{Fig1}-\ref{Fig7} below. We compute the $\mathrm{IF}_{S}$
of these signals using STFT Synchrosqueezing and the $\mathrm{IF}_{H}$
using the bandlimited reconstruction method. For the $\mathrm{IF}_{S}$
computation, the Gaussian window function $g(t)=e^{-0.1\pi t^{2}}$
and the parameters $\alpha=0.1$ and $\gamma=10^{-8}$ work well in
practice, and we use these values unless stated otherwise.\\

In all of these examples, we use sampling times that are random perturbations
of uniformly spaced times and have the form $t_{n}=Tn+T'a_{n}$, where
$T'<T$ and $\{a_{n}\}$ is a fixed realization of a white noise process
uniformly distributed in $[0,1]$. For purposes of comparison, we
also test most of these examples with uniform samples as well, i.e.
$T'=0$. In the figures below, the first two images are respectively
$\mathrm{IF}_{S}f$ and $\mathrm{IF}_{H}f$ determined from uniform
samples, while the two latter images are $\mathrm{IF}_{S}f$ and $\mathrm{IF}_{H}f$
for the general case of nonuniform samples.\\

\begin{figure}[H]
\noindent \centering{}\subfloat{\includegraphics[width=0.22\textwidth]{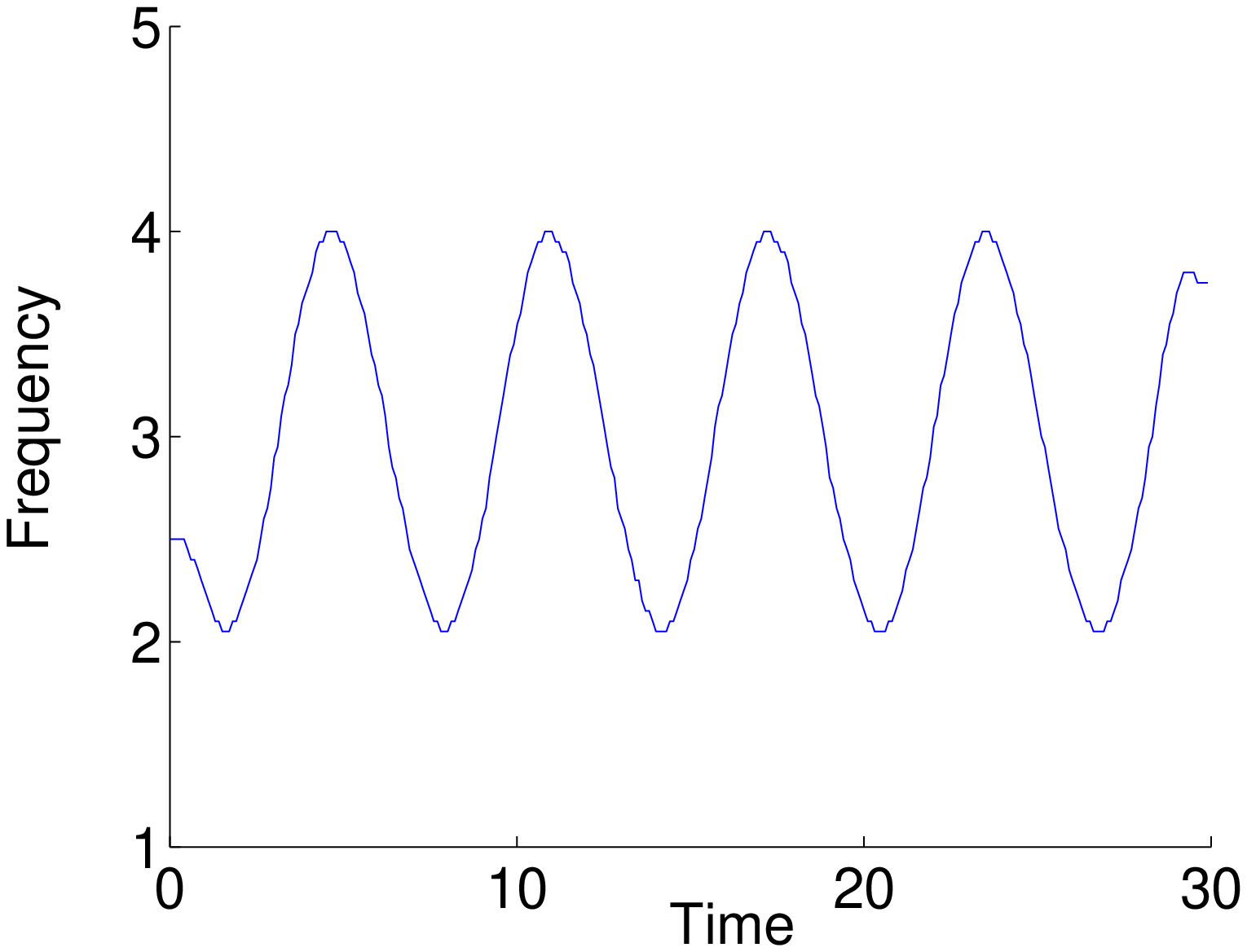}}\subfloat{\includegraphics[width=0.22\textwidth]{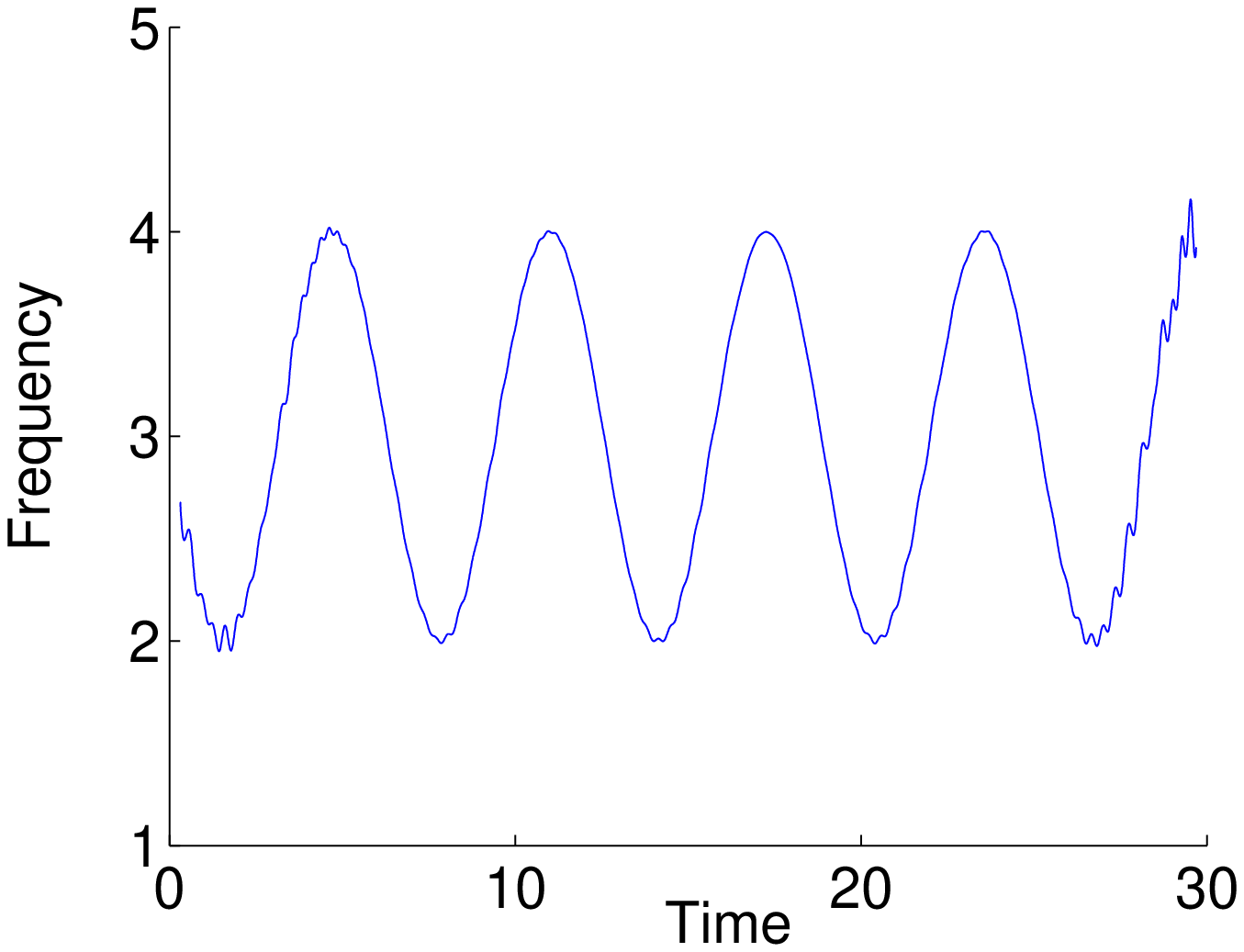}}~~~~~~\subfloat{\includegraphics[width=0.22\textwidth]{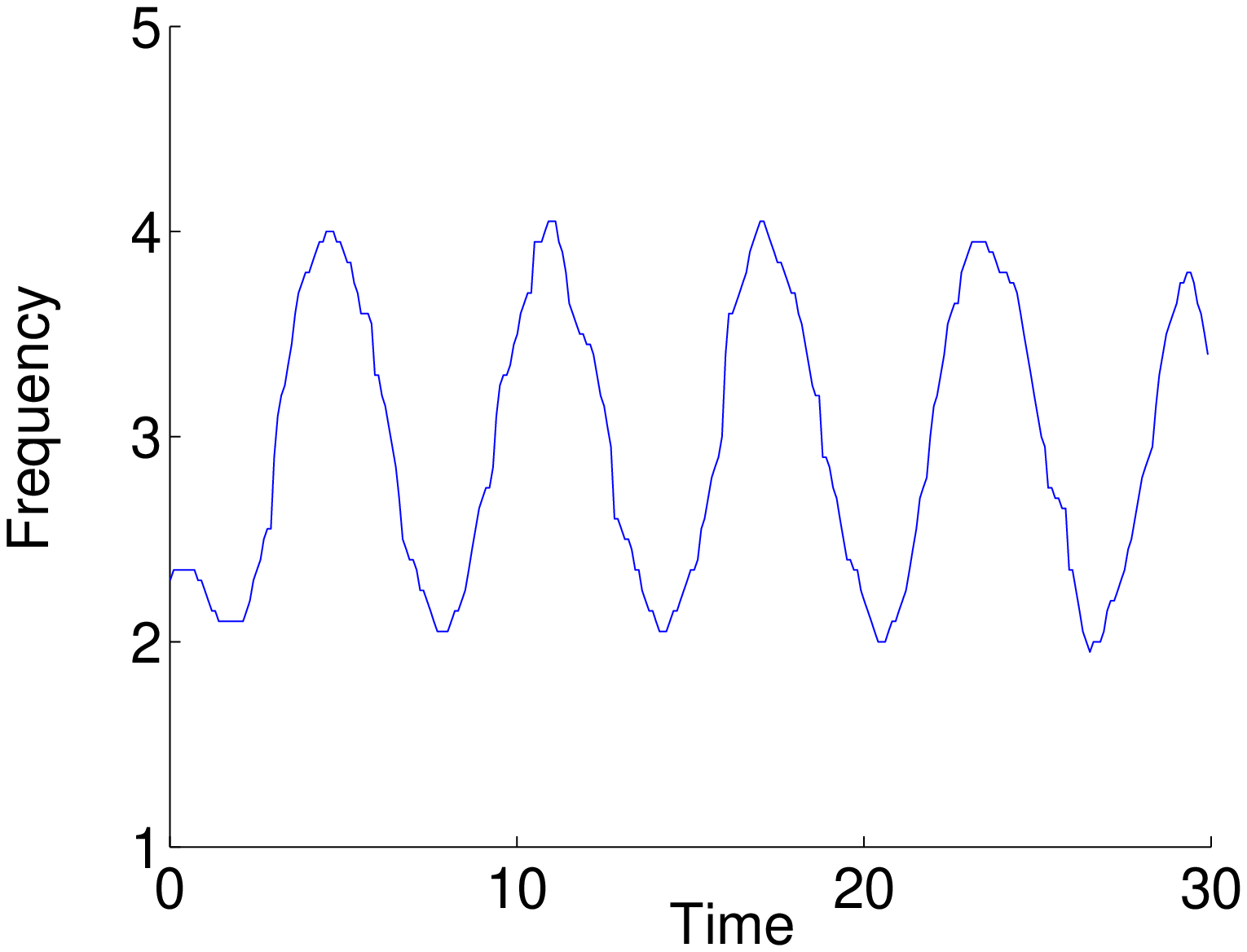}}\subfloat{\includegraphics[width=0.22\textwidth]{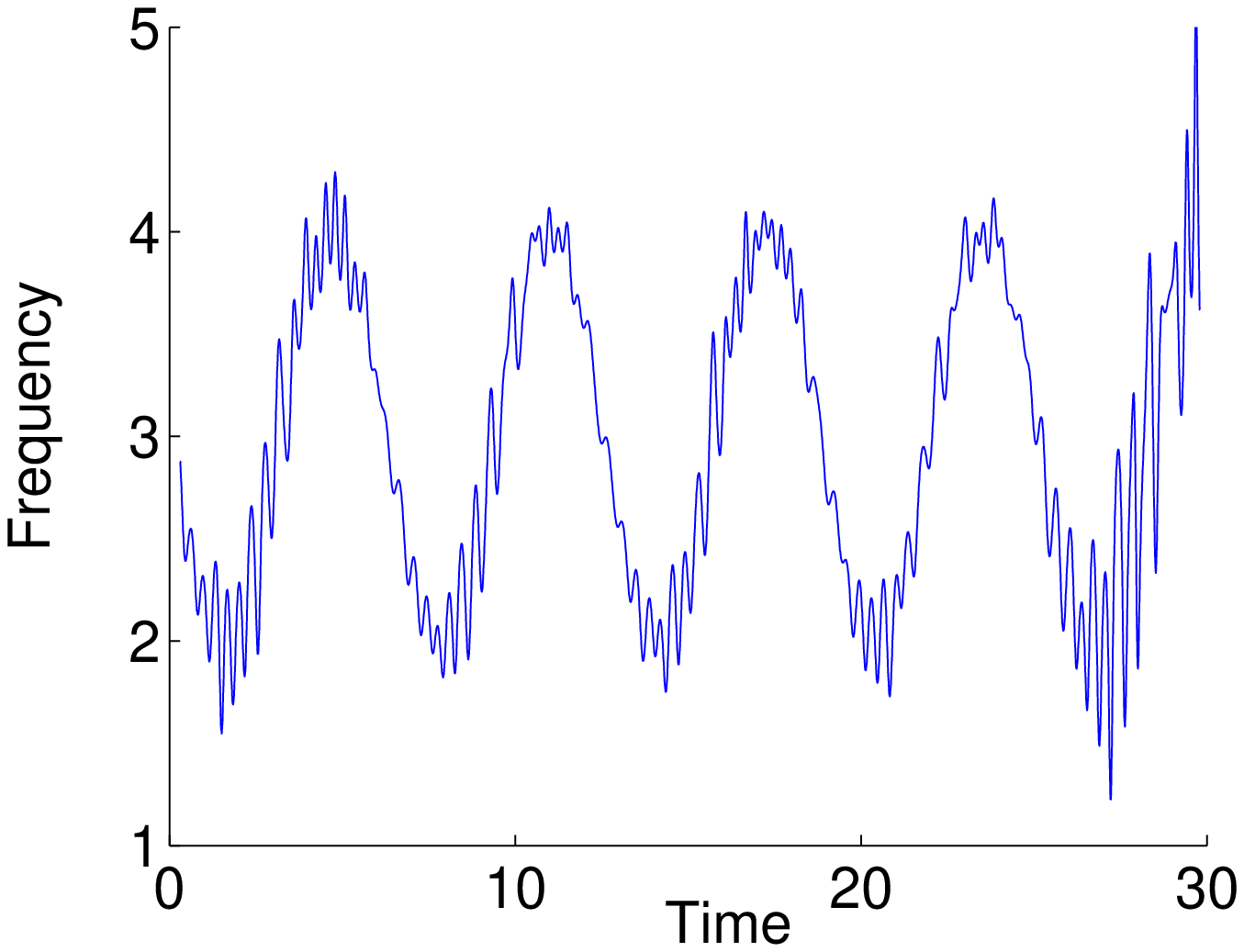}}
\caption{\label{Fig1} The AM-FM signal $f(t)=(2+\cos t)\cos(2\pi(3t+\cos t))$,
$t\in[0,30]$. The samples are taken at $t_{n}=0.1n+T'a_{n}$, $T'=0$
(left images) and $T'=0.08$ (right images). The IIF is $3-\sin t$,
and both methods produce results that are reasonably close to this.}
\end{figure}

\begin{figure}[H]
\centering{}\subfloat{\includegraphics[width=0.22\textwidth]{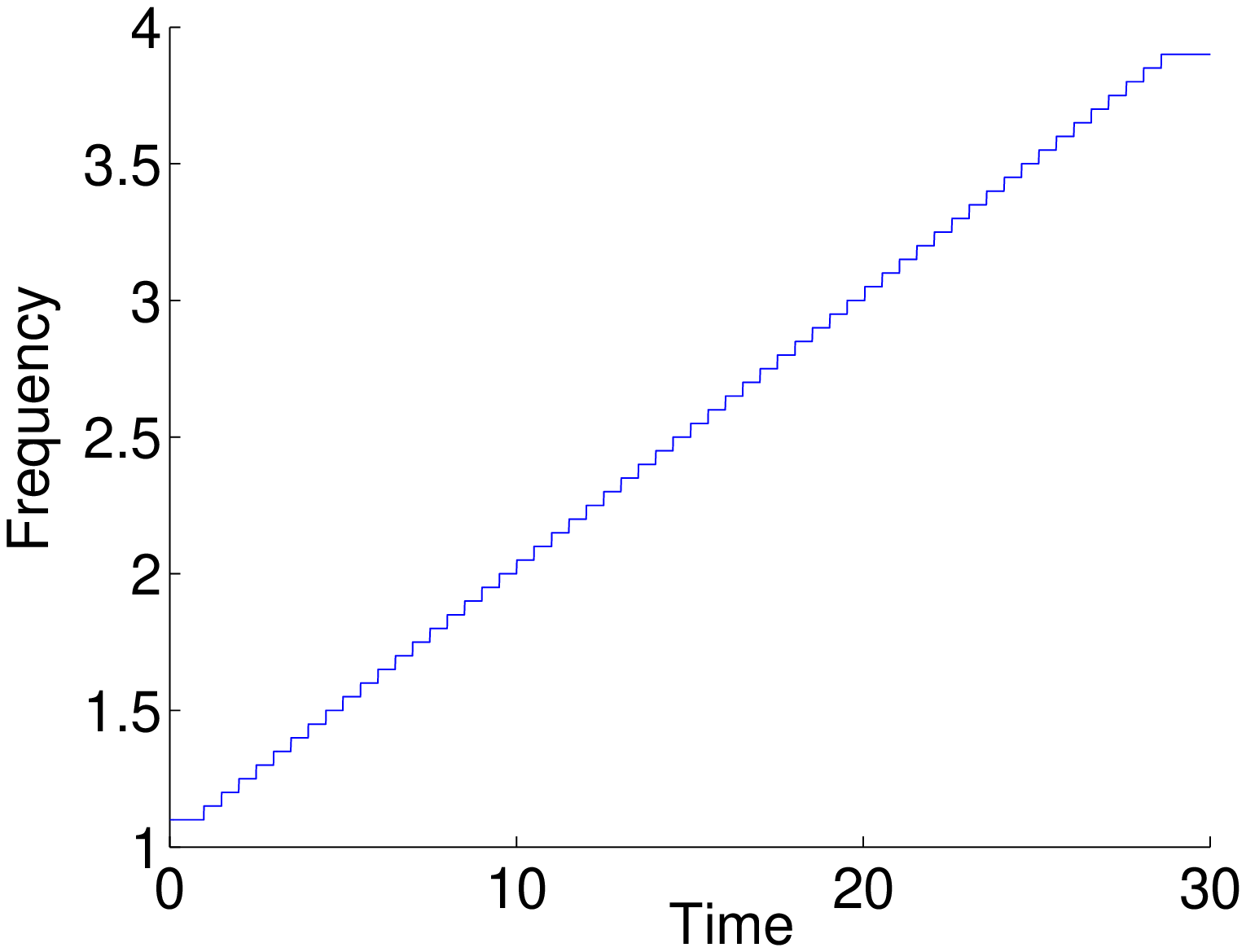}}\subfloat{\includegraphics[width=0.22\textwidth]{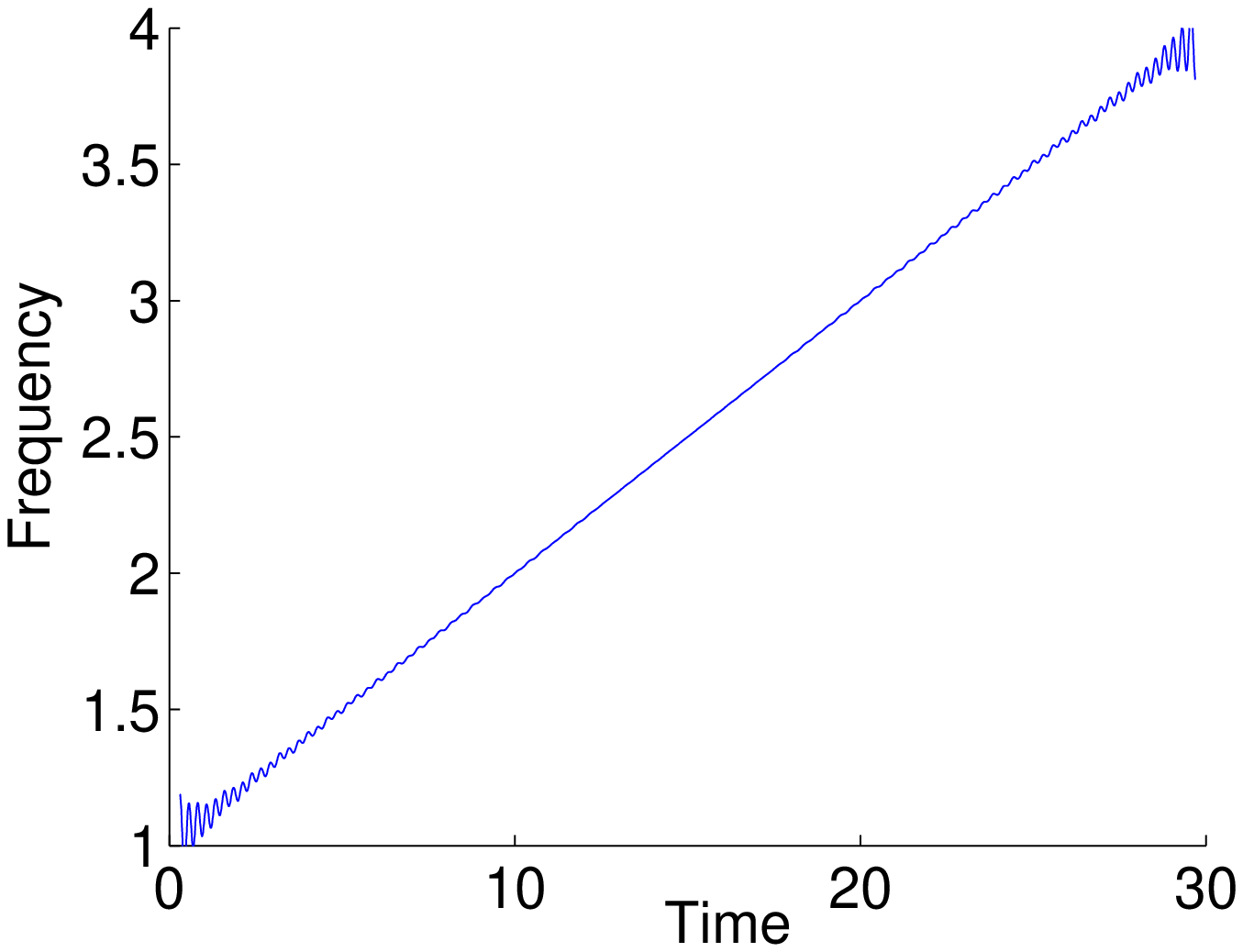}}~~~~~~\subfloat{\includegraphics[width=0.22\textwidth]{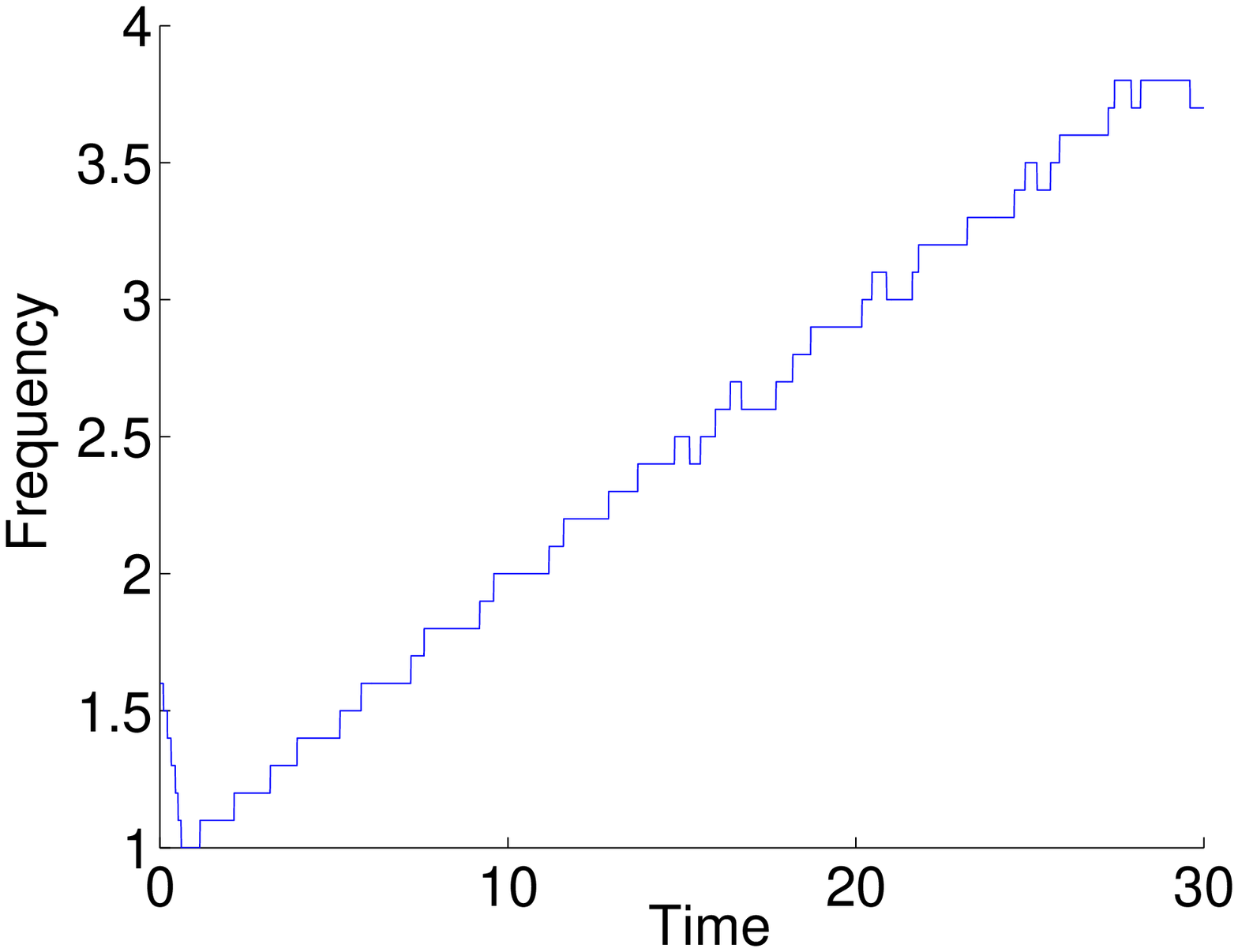}}\subfloat{\includegraphics[width=0.22\textwidth]{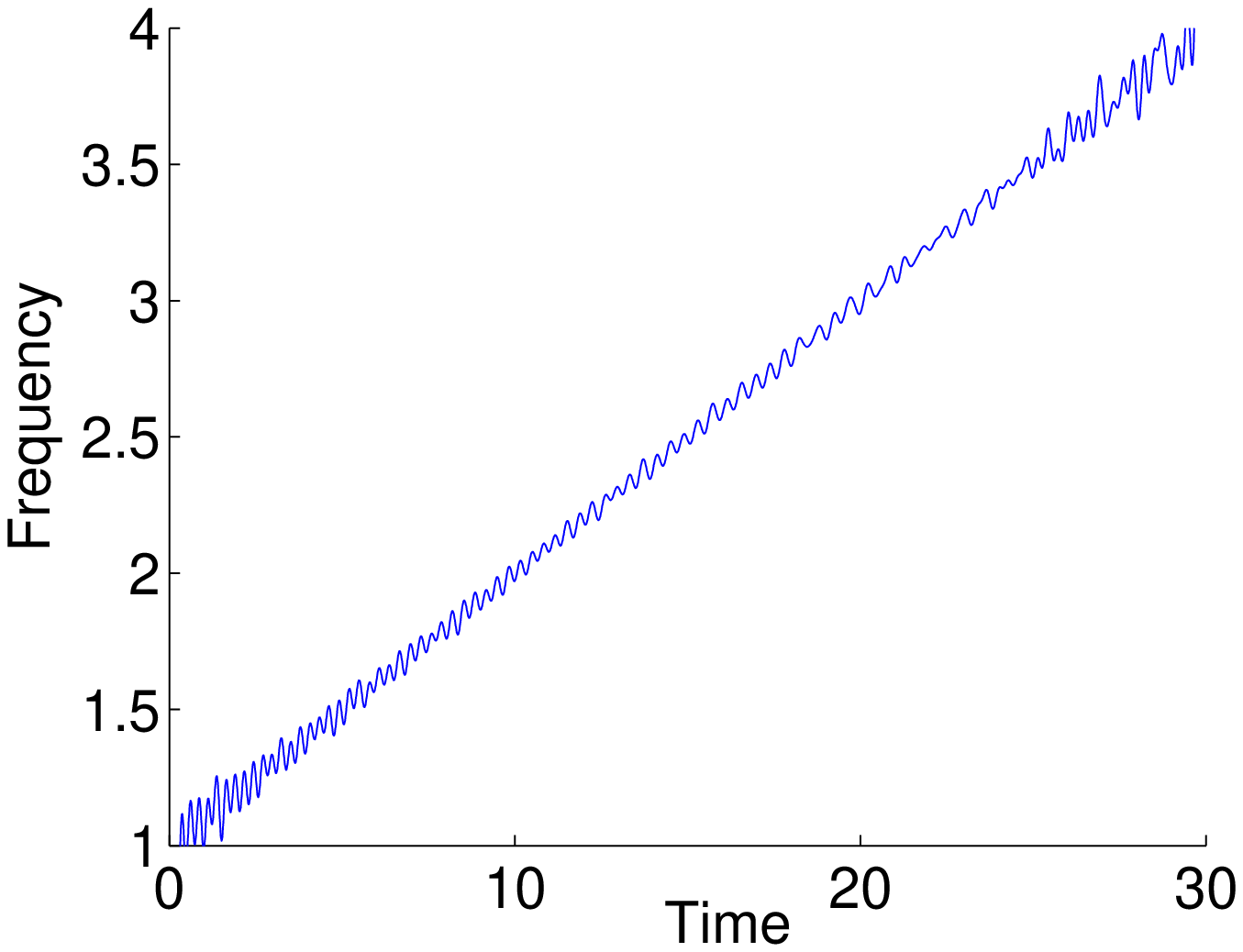}}
\caption{\label{Fig2} The chirp signal $f(t)=\cos(2\pi(t+0.05t^{2}))$, $t\in[0,30]$,
with samples taken at $t_{n}=0.1n+T'a_{n}$, $T'=0$ (left images)
and $T'=0.08$ (right images). The IIF in this case is $1+0.1t$,
and both methods produce results that are very close to this.}
\end{figure}

\begin{figure}[H]
\begin{centering}
\subfloat{\includegraphics[width=0.22\textwidth]{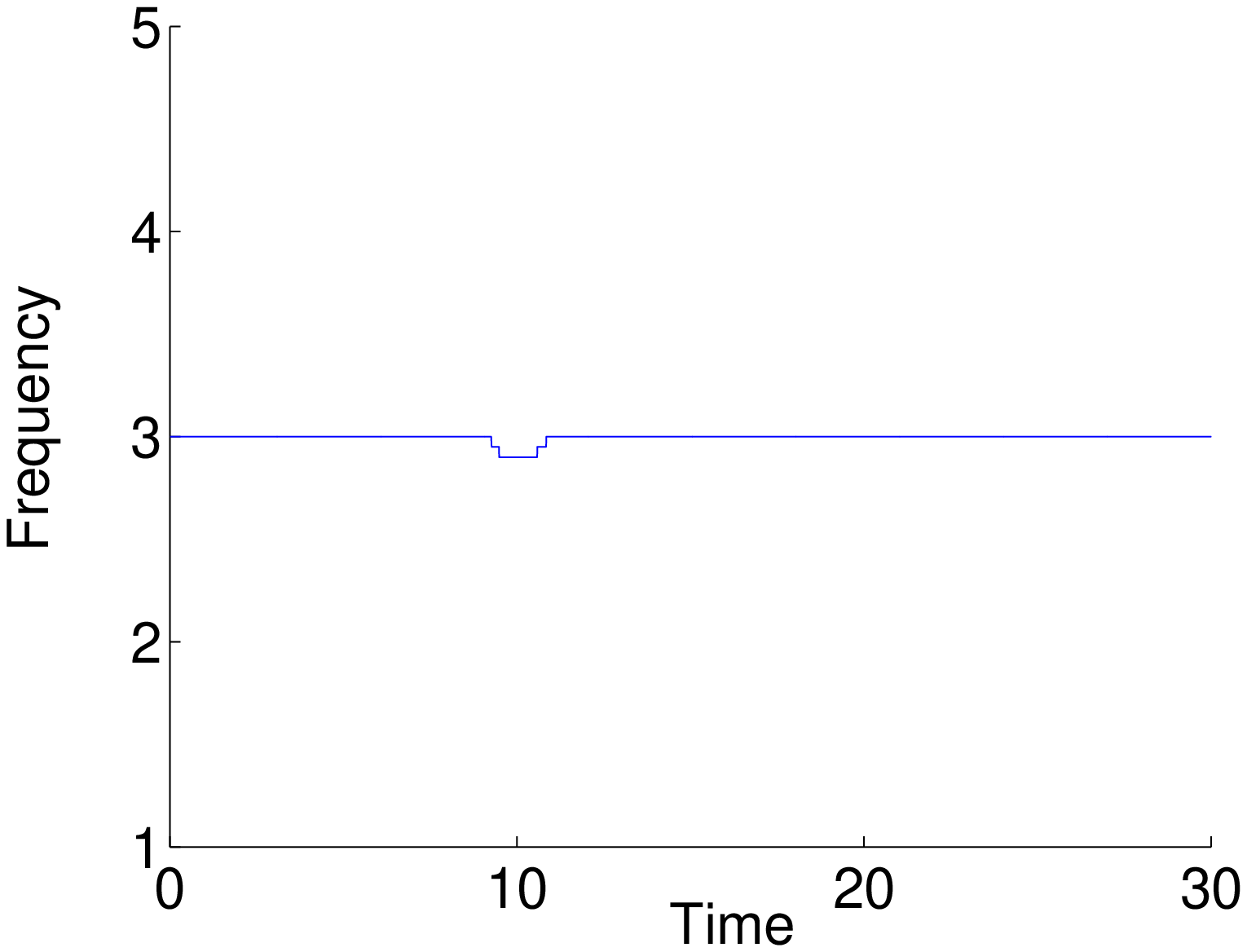}}\subfloat{\includegraphics[width=0.22\textwidth]{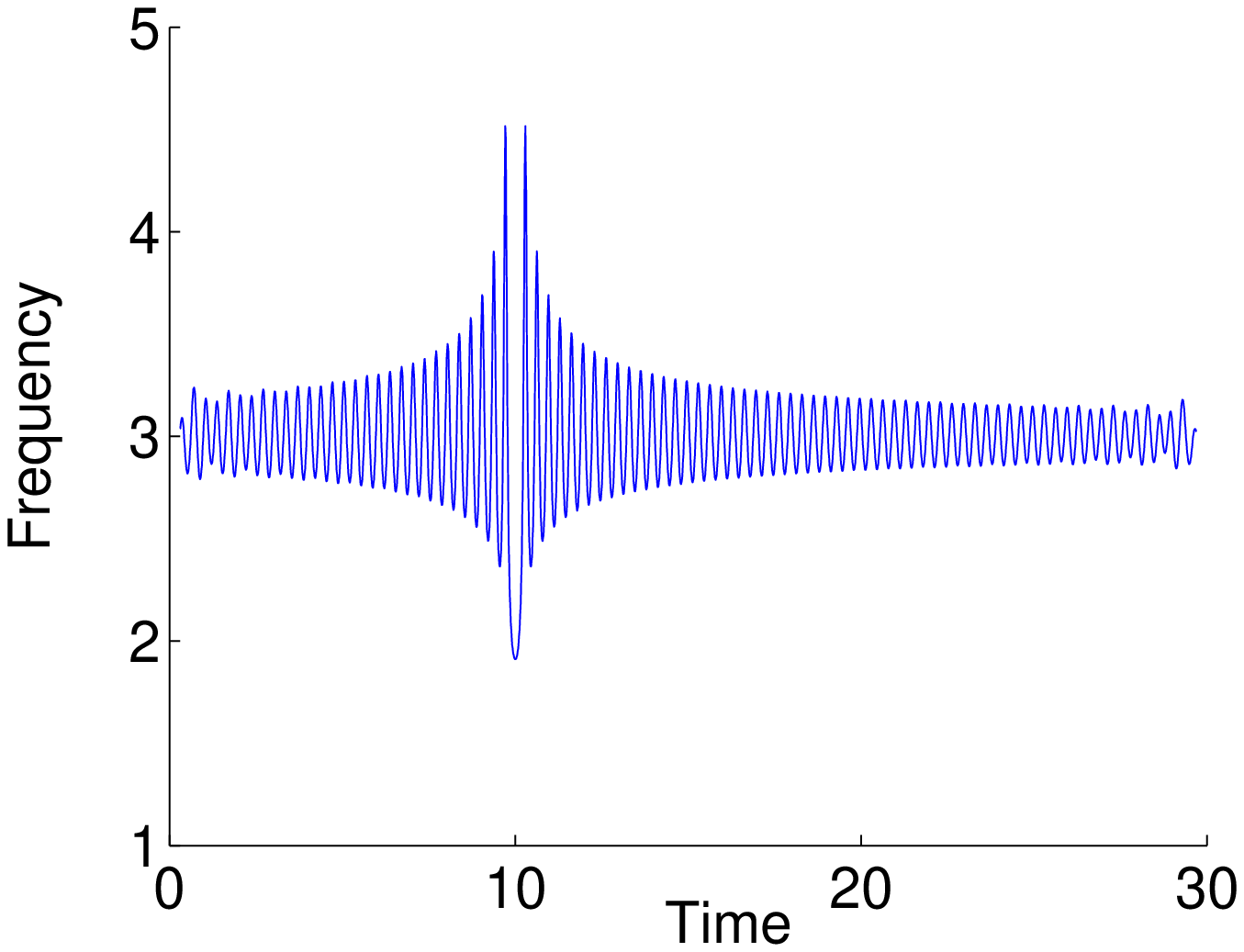}}~~~~~~\subfloat{\includegraphics[width=0.22\textwidth]{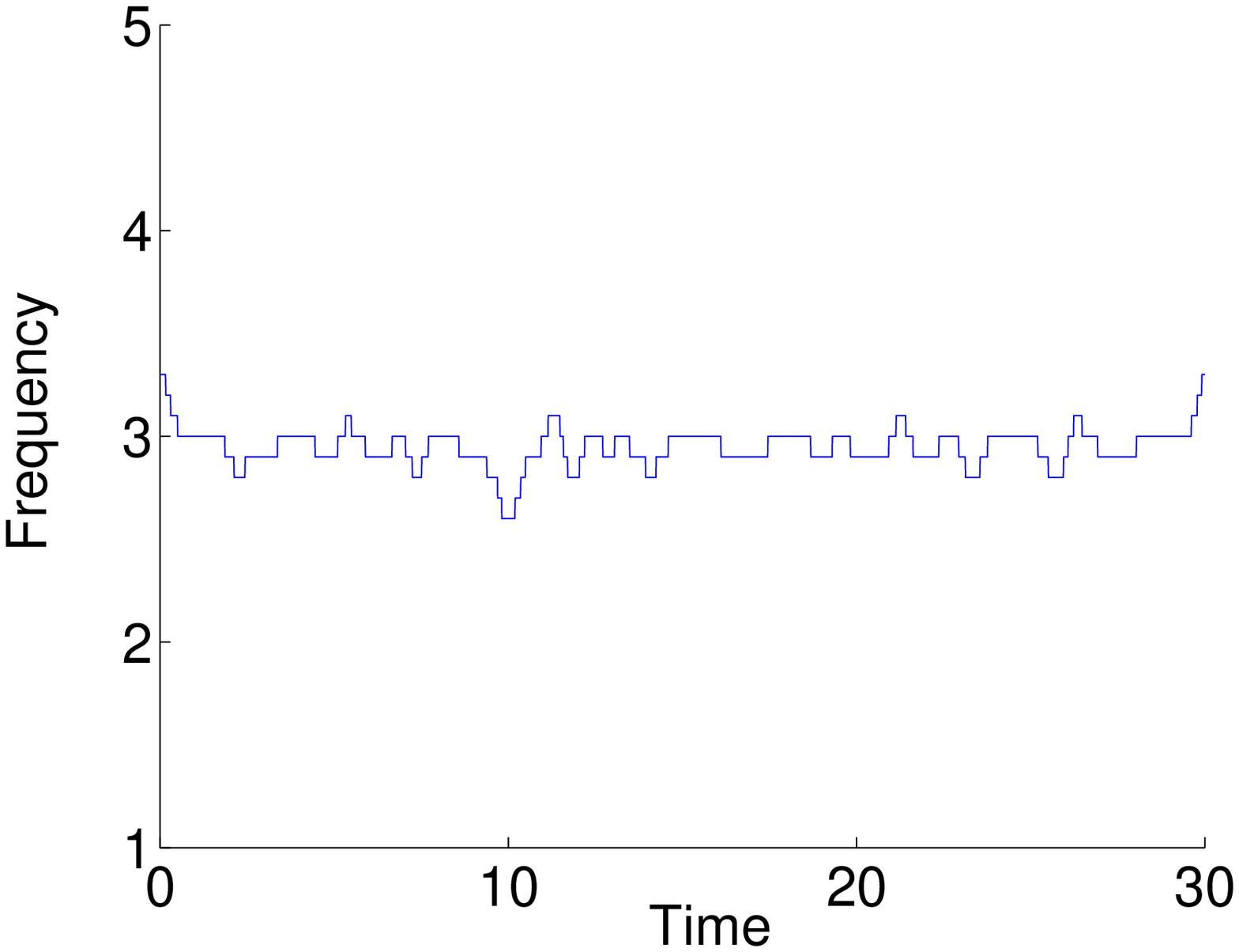}}\subfloat{\includegraphics[width=0.22\textwidth]{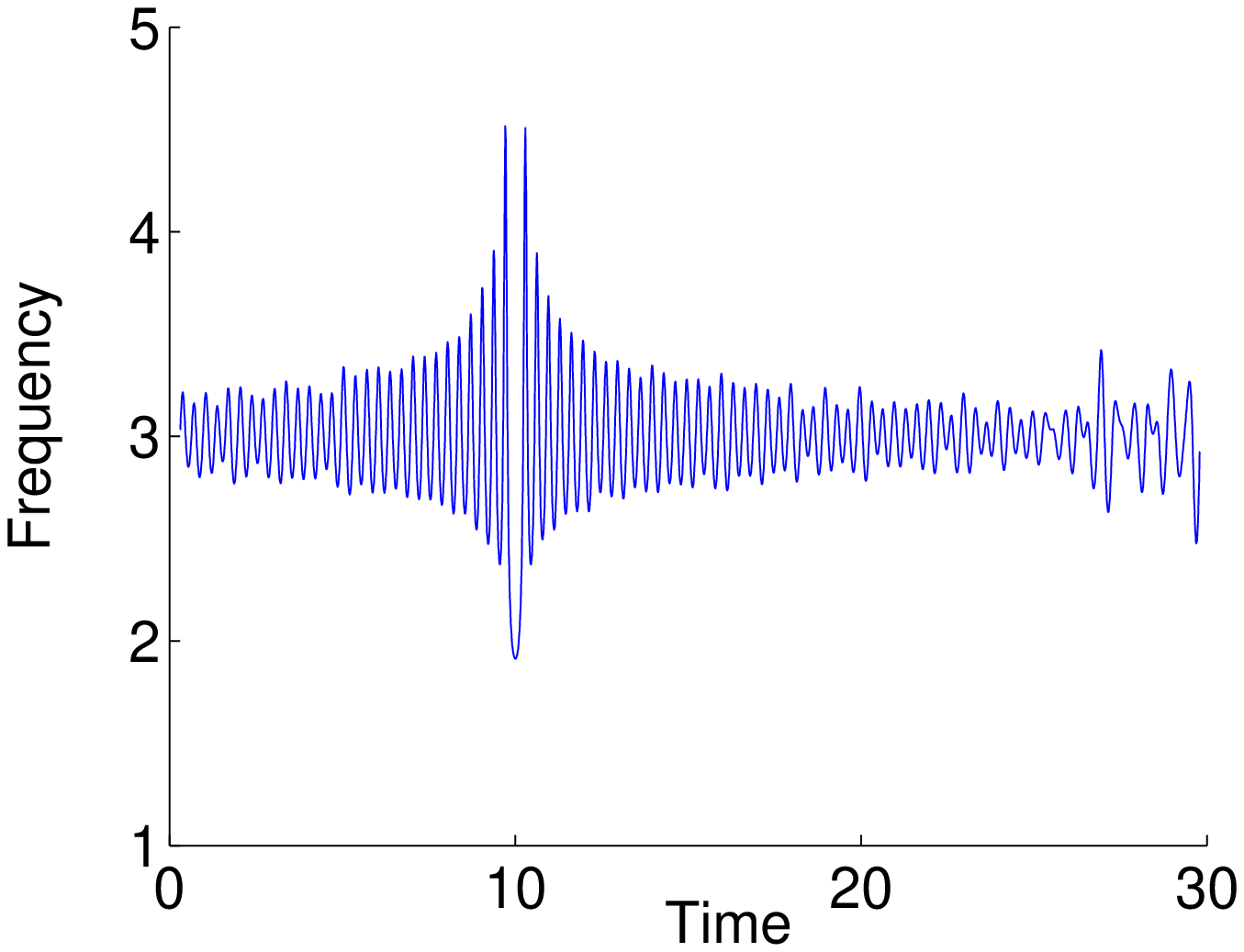}} 
\par\end{centering}

\begin{centering}
\subfloat{\includegraphics[scale=0.5]{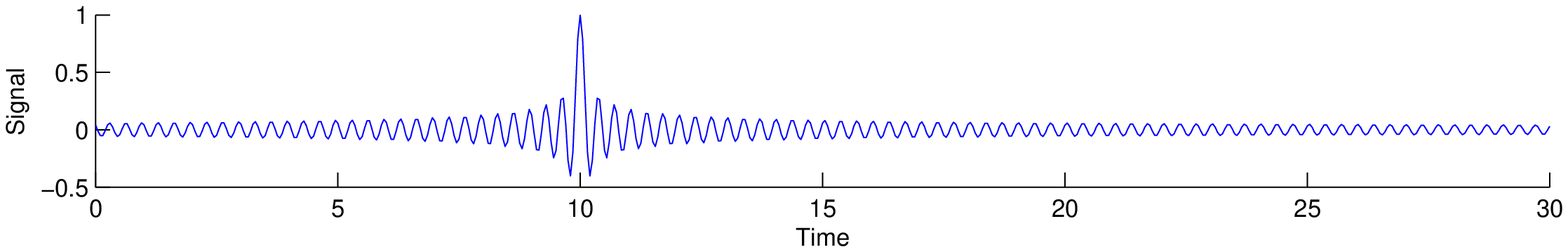}} 
\par\end{centering}

\caption{\label{Fig3} The bandlimited signal $f(t)=J_{0}(6\pi(t-10))$, $t\in[0,30]$,
where $J_{0}$ is the Bessel function of order $0$ (see \cite{WW27}).
The samples are taken at $t_{n}=0.1n+T'a_{n}$, $T'=0$ (left images)
and $T'=0.08$ (right images). There is no way to determine the IIF
in this case, but it can be shown that $J_{0}(0)=1$ and $J_{0}(t)\approx\sqrt{\frac{2}{\pi|t|}}\cos(|t|-\frac{\pi}{4})$
when $|t|$ is large, so we would expect the IF to be roughly the
constant $3$. The computed $\mathrm{IF}_{S}f$ agrees with our intuition
here. On the other hand, $\mathrm{IF}_{H}f$ is highly oscillating,
particularly around $t=10$, which indicates that it is unable to
clearly distinguish between the amplitude and frequency factors of
$f$. We also show the graph of $f$ itself at the bottom for clarity.}
\end{figure}

\begin{figure}[H]
\centering{}\subfloat{\includegraphics[width=0.22\textwidth]{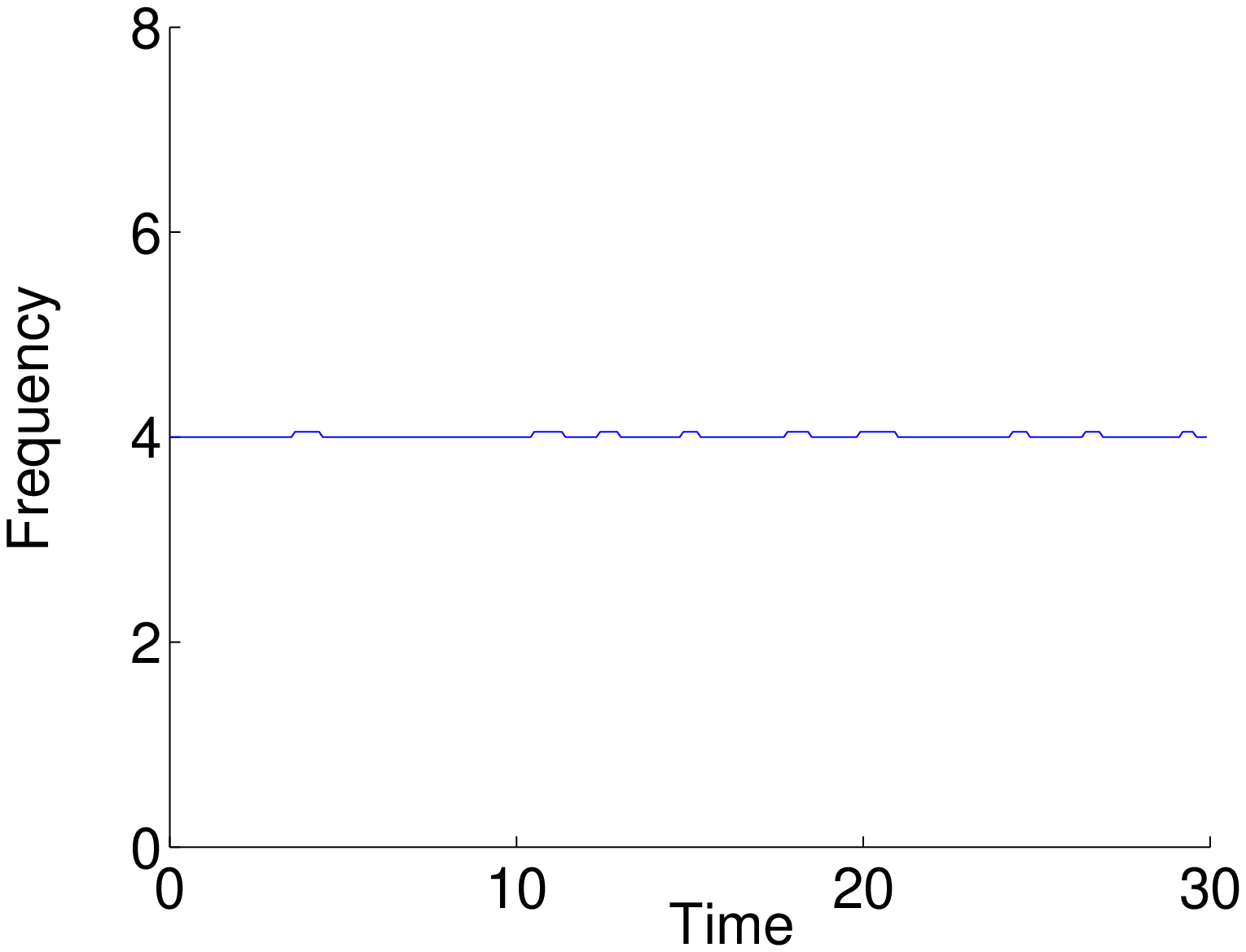}}\subfloat{\includegraphics[width=0.22\textwidth]{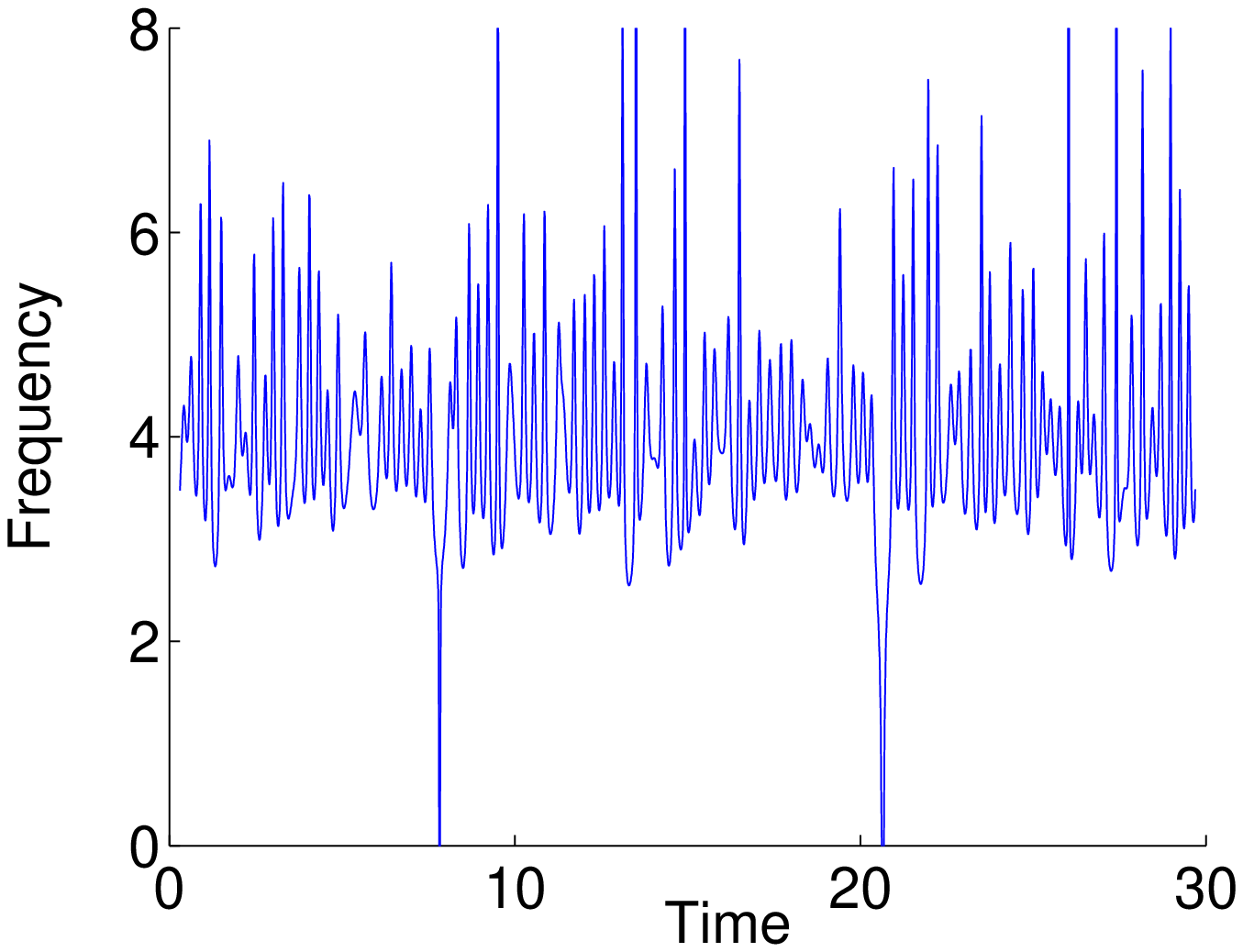}}~~~~~~\subfloat{\includegraphics[width=0.22\textwidth]{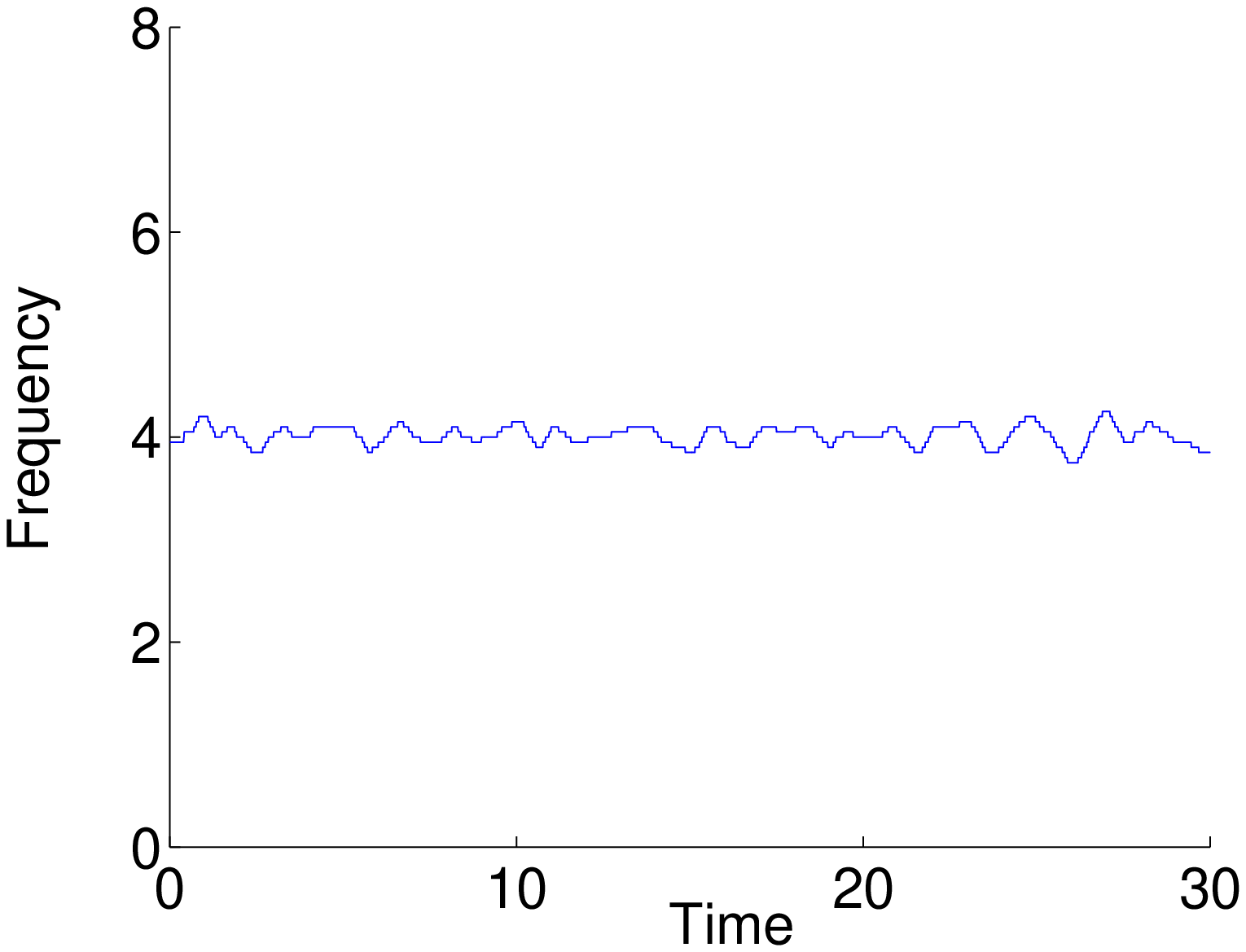}}\subfloat{\includegraphics[width=0.22\textwidth]{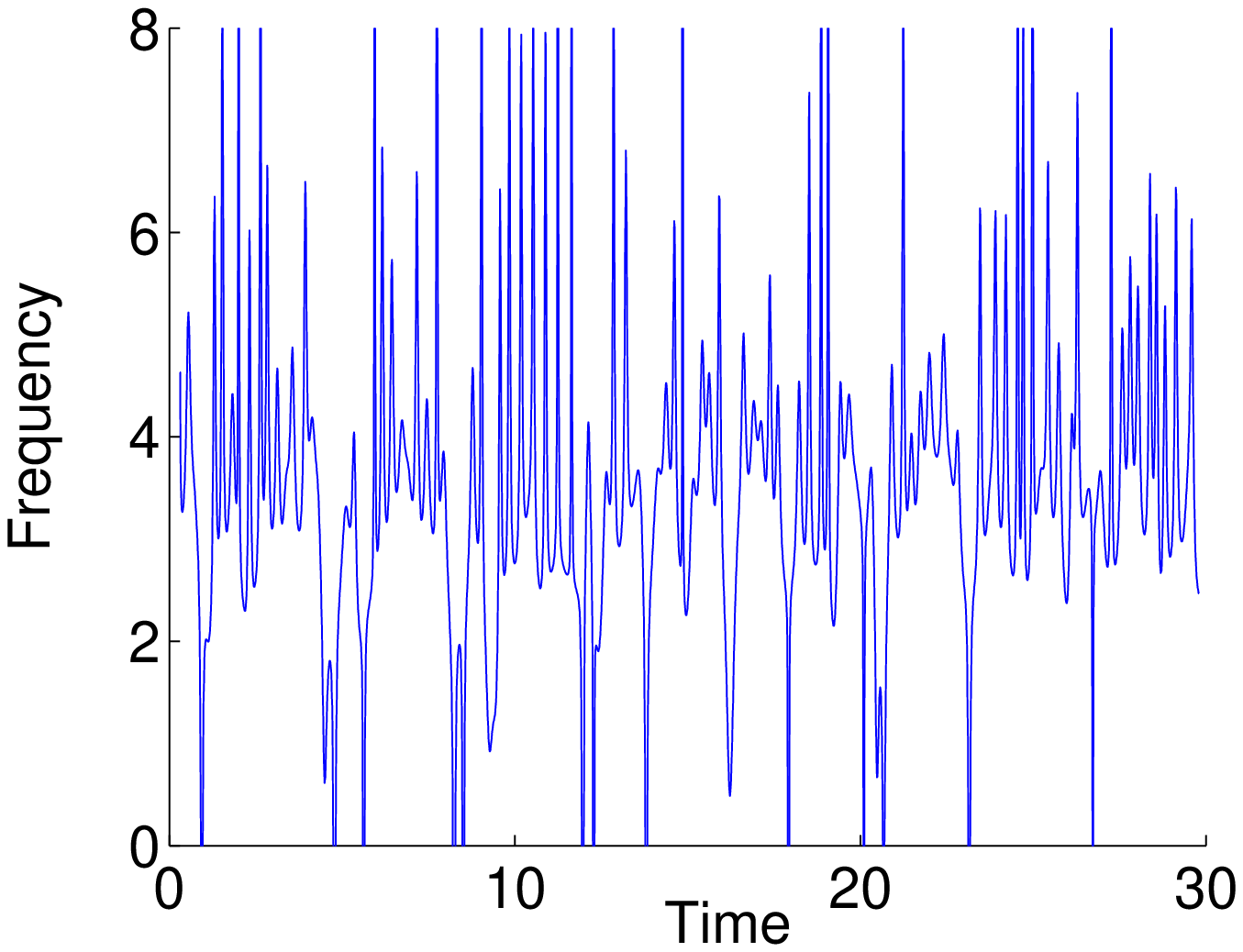}}\caption{\label{Fig4} The signal $f(t)=\cos(8\pi t)+N_{t}$, $t\in[0,30]$,
where $N_{t}$ is a realization of Gaussian white noise with mean
$0$ and variance $\sigma^{2}=0.4$. The samples are taken at $t_{n}=0.1n+T'a_{n}$,
$T'=0$ (left images) and $T'=0.08$ (right images). The computation
of $\mathrm{IF}_{S}f$ is fairly robust to the noise, while that of
$\mathrm{IF}_{H}f$ gives very poor results.}
\end{figure}

\begin{figure}[H]
\centering{}\subfloat{\includegraphics[width=0.27\textwidth]{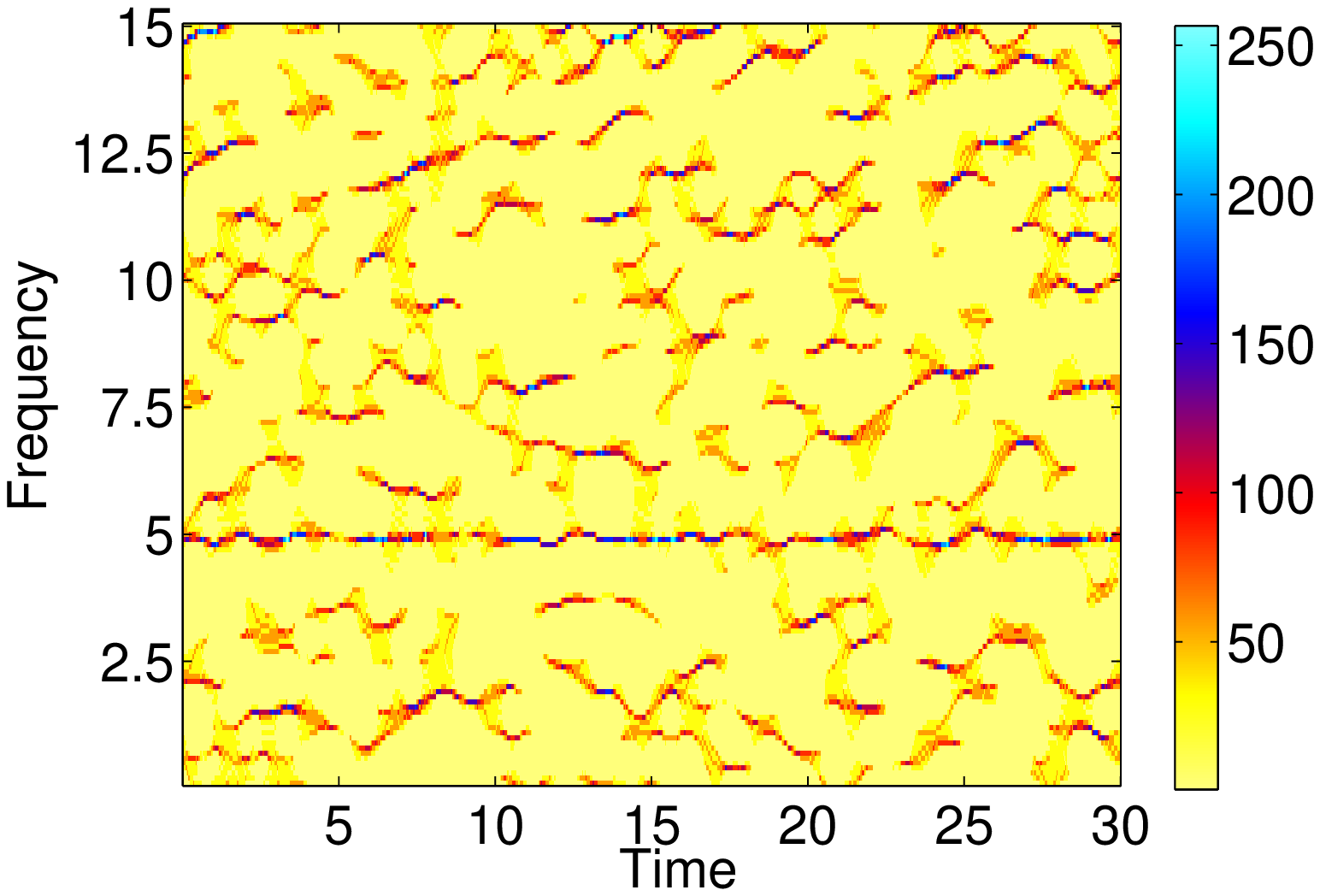}}\subfloat{\includegraphics[width=0.22\textwidth]{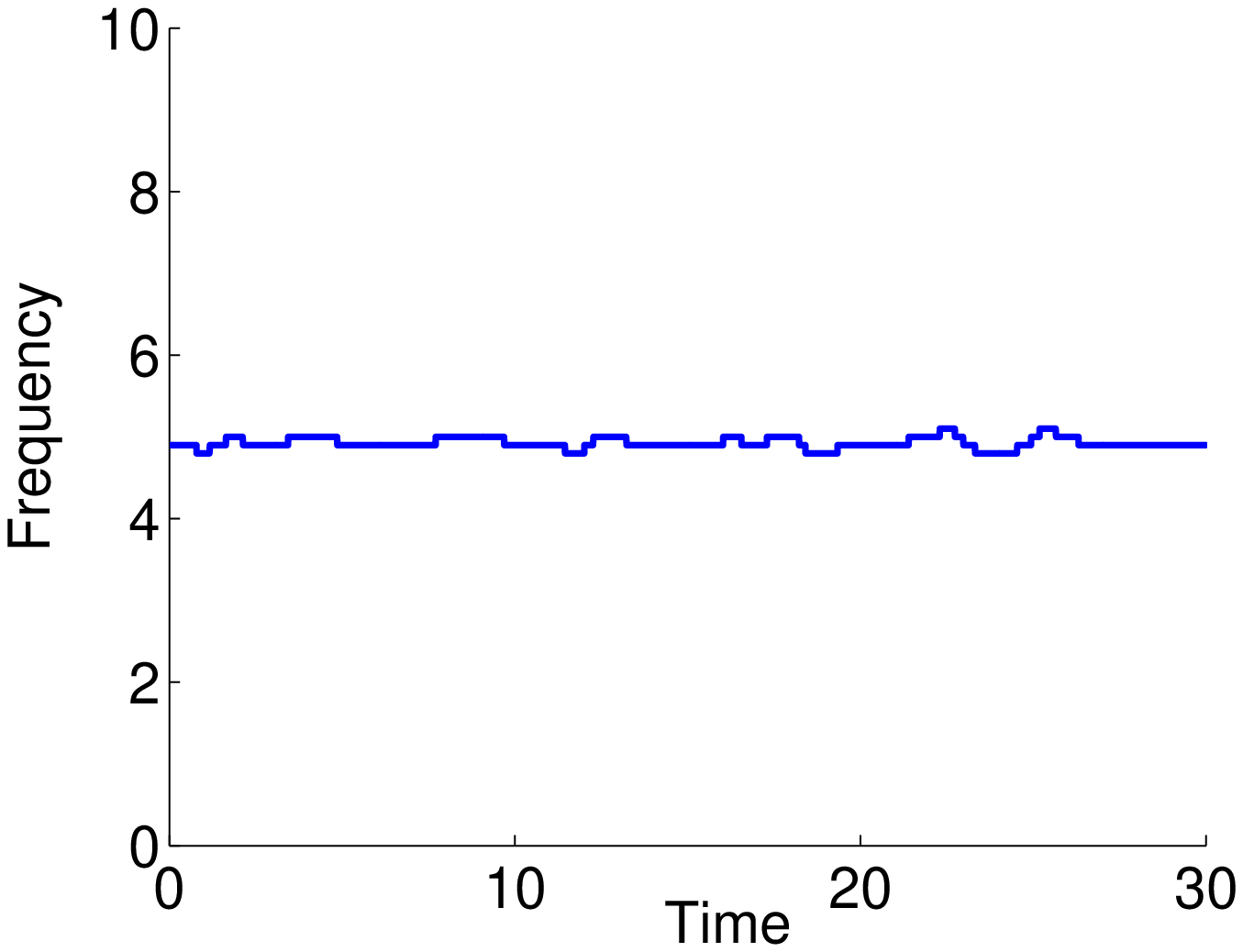}}\subfloat{\includegraphics[width=0.22\textwidth]{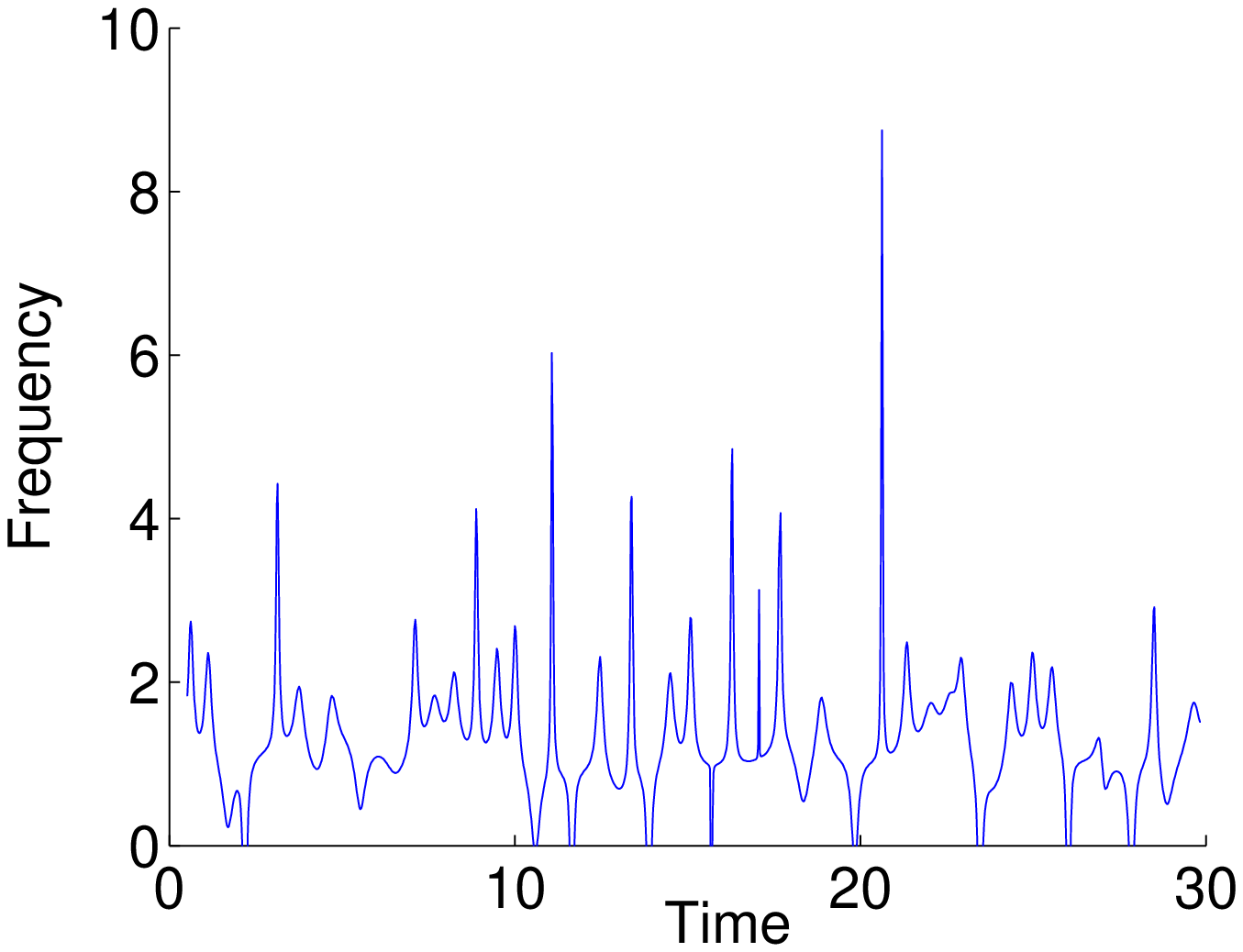}}\caption{\label{Fig5} The signal $f(t)=\cos(10\pi t)$, $t\in[0,30]$, with
samples taken at $t_{n}=0.25n+0.2a_{n}$. The IIF is the constant
$5$. Note that this signal is heavily undersampled, with a sampling
rate of less than half its Nyquist rate. We take $\gamma=6$ for the
STFT Synchrosqueezing computation, which produces a good result for
$\mathrm{IF}_{S}f$ despite the low sampling rate, but the bandlimited
reconstruction method cannot determine $\mathrm{IF}_{H}f$. In this
example, we also show the time-frequency plot of $S^{\alpha,\gamma}$
(the first image) to illustrate how the IF curve appears in it, before
we extract it out.}
\end{figure}

\begin{figure}[H]
\begin{centering}
\subfloat{\includegraphics[width=0.27\textwidth]{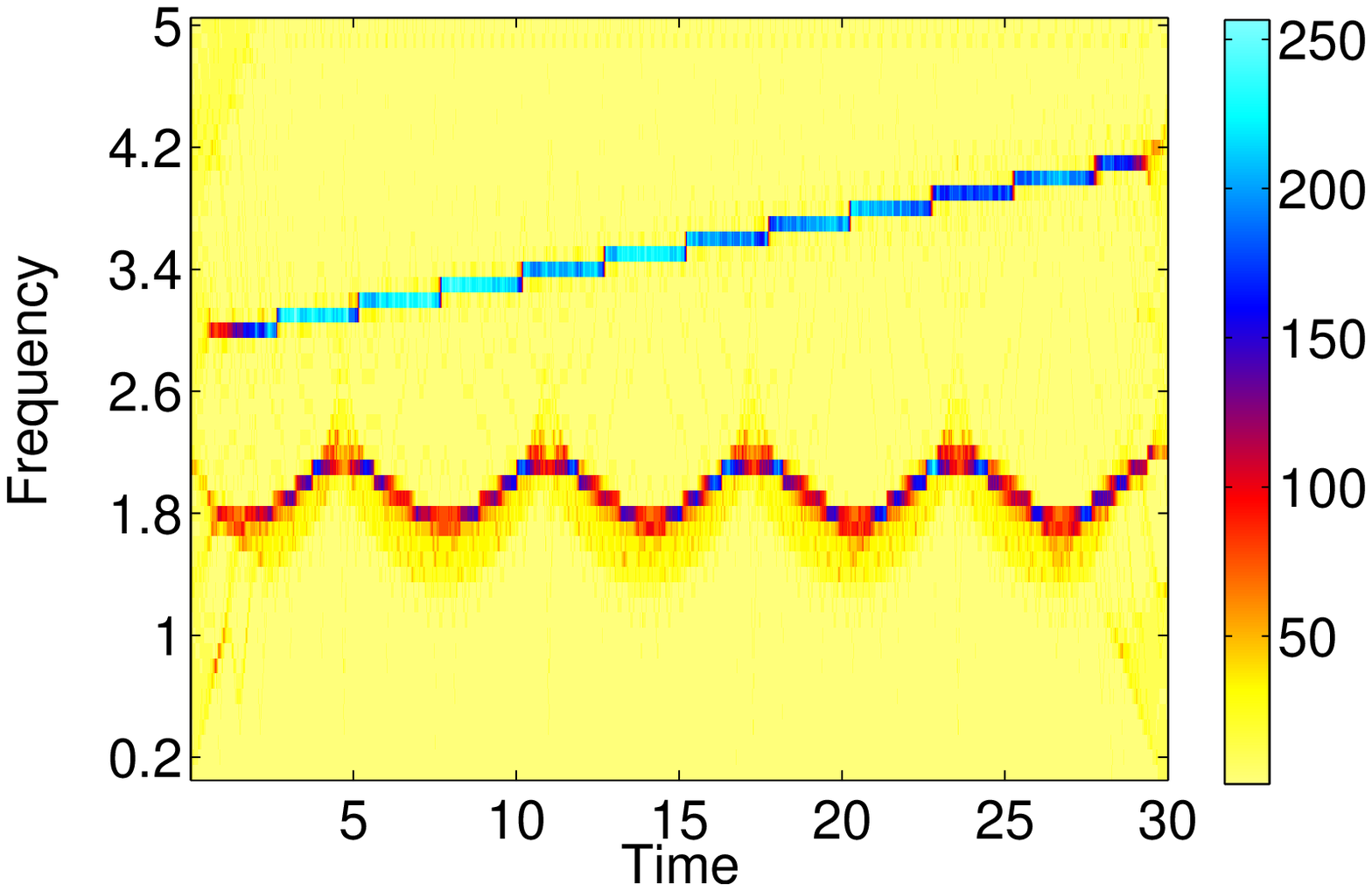}}\subfloat{\includegraphics[width=0.22\textwidth]{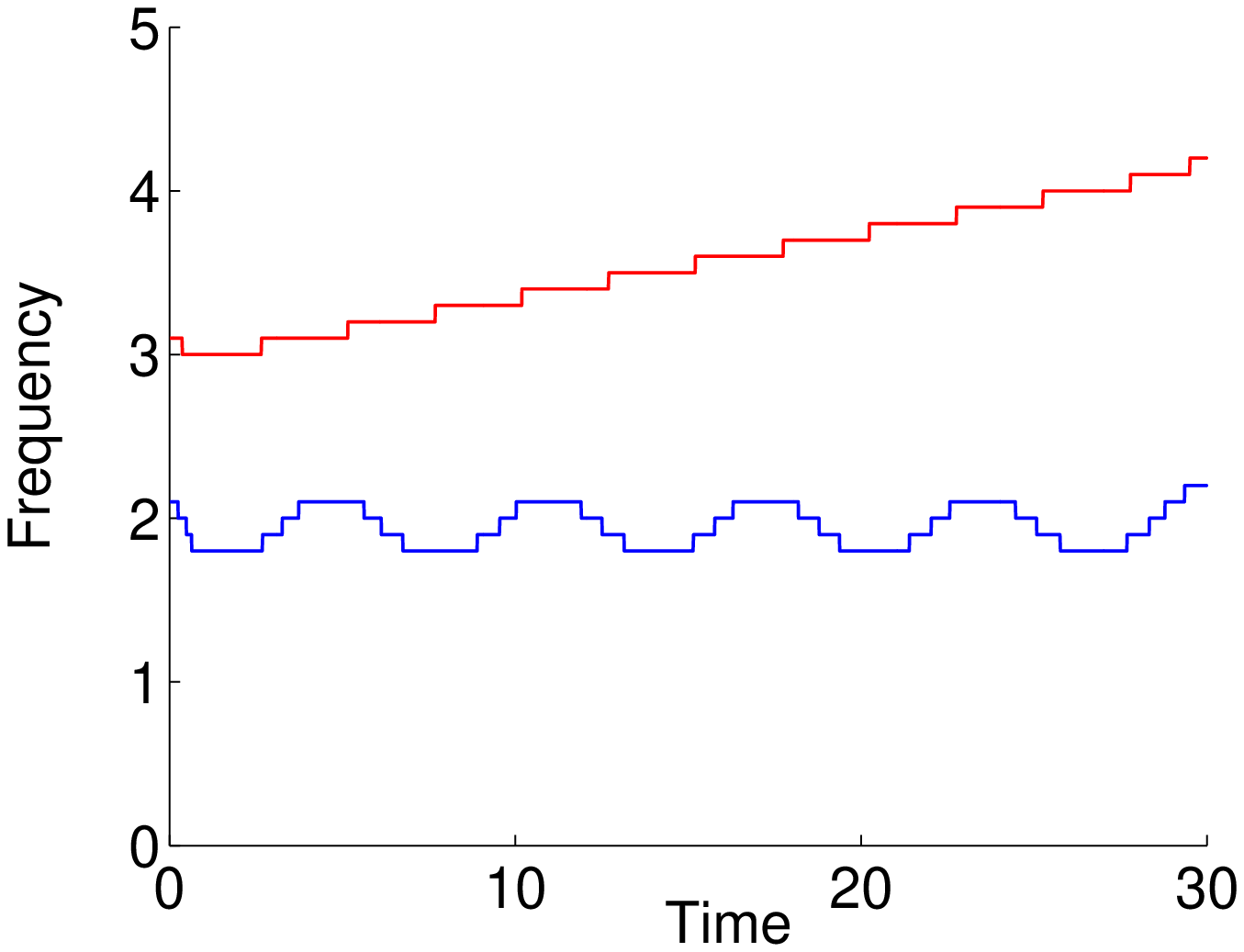}}\subfloat{\includegraphics[width=0.22\textwidth]{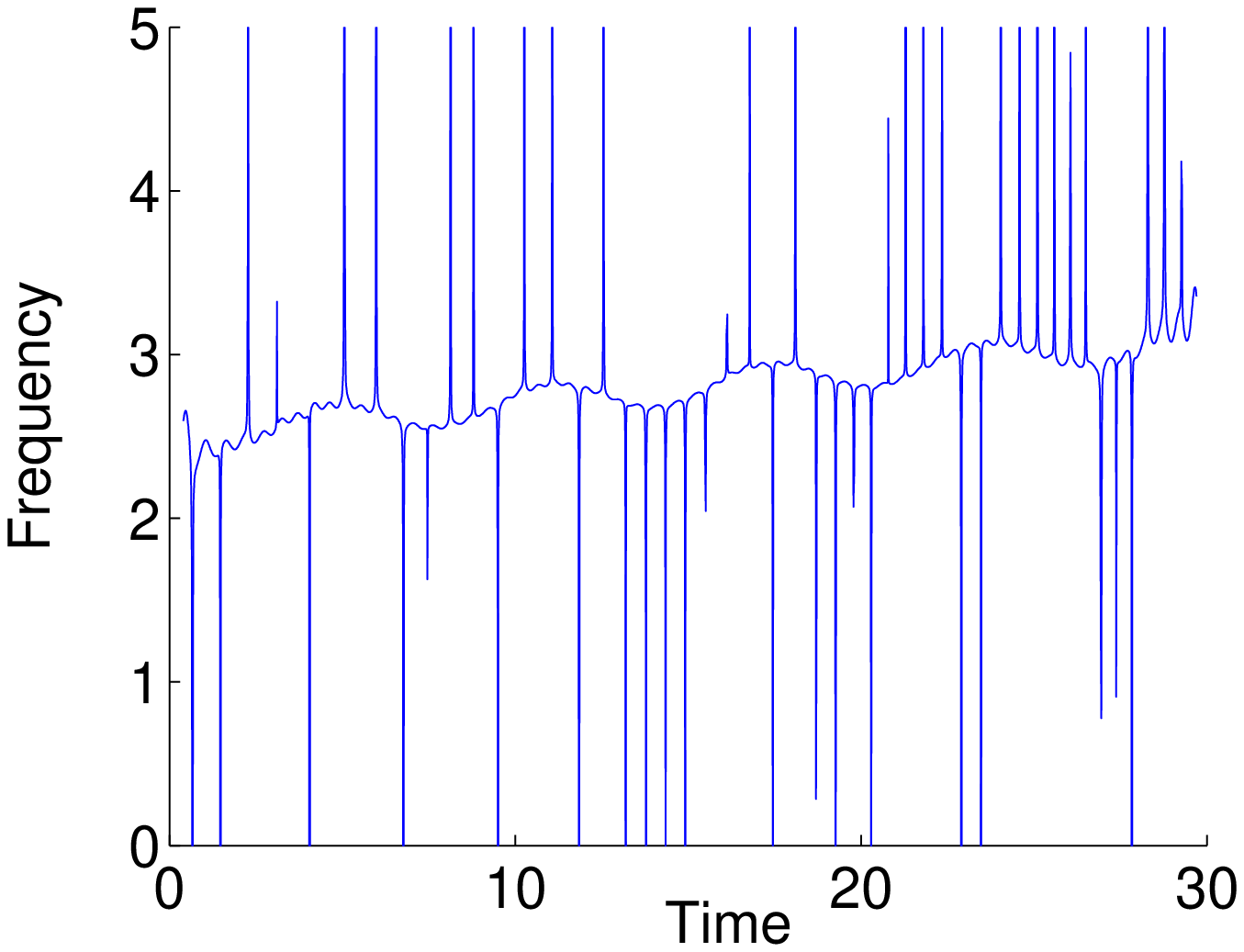}} 
\par\end{centering}

\centering{}\subfloat{\includegraphics[width=0.27\textwidth]{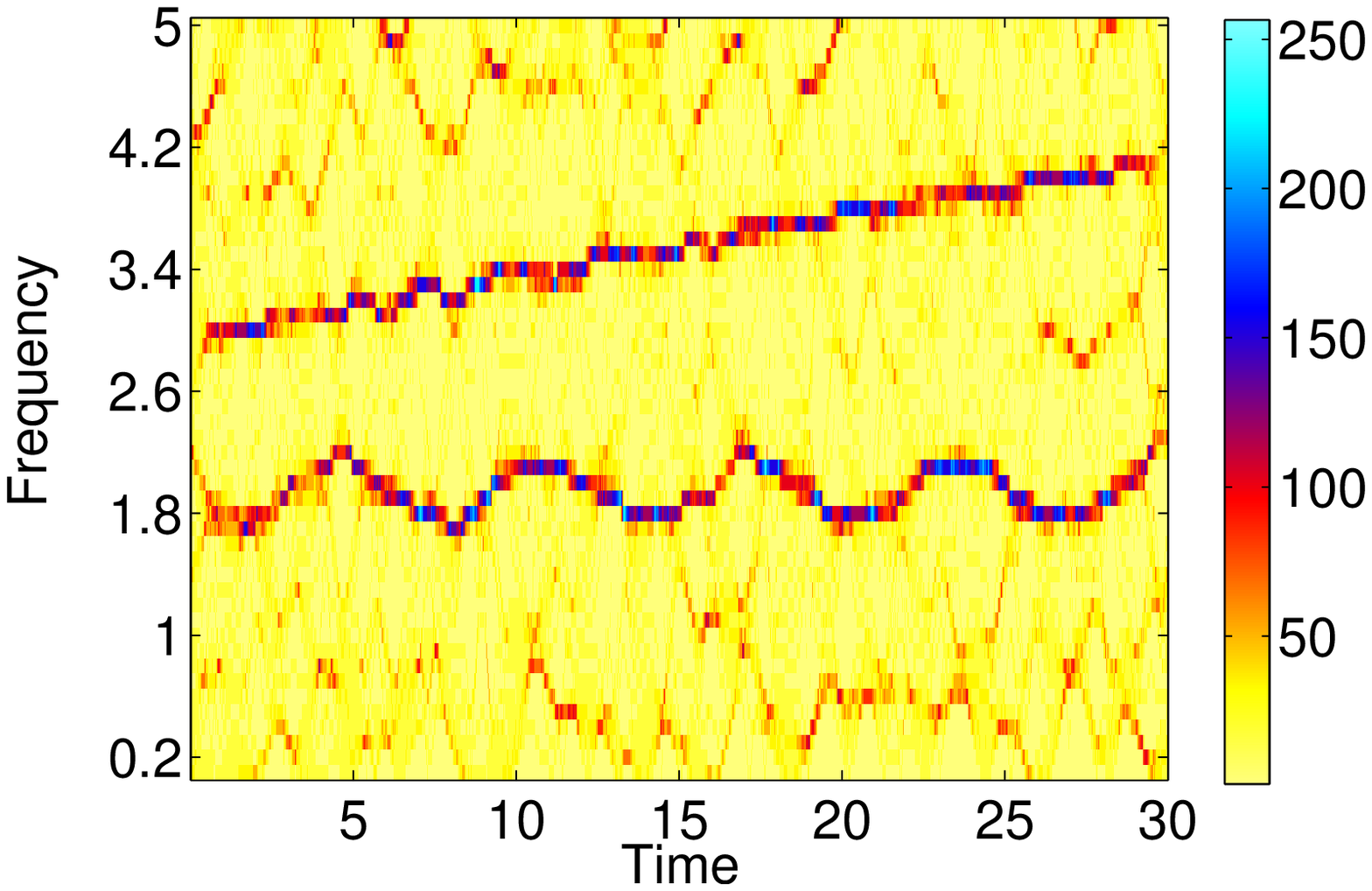}}\subfloat{\includegraphics[width=0.22\textwidth]{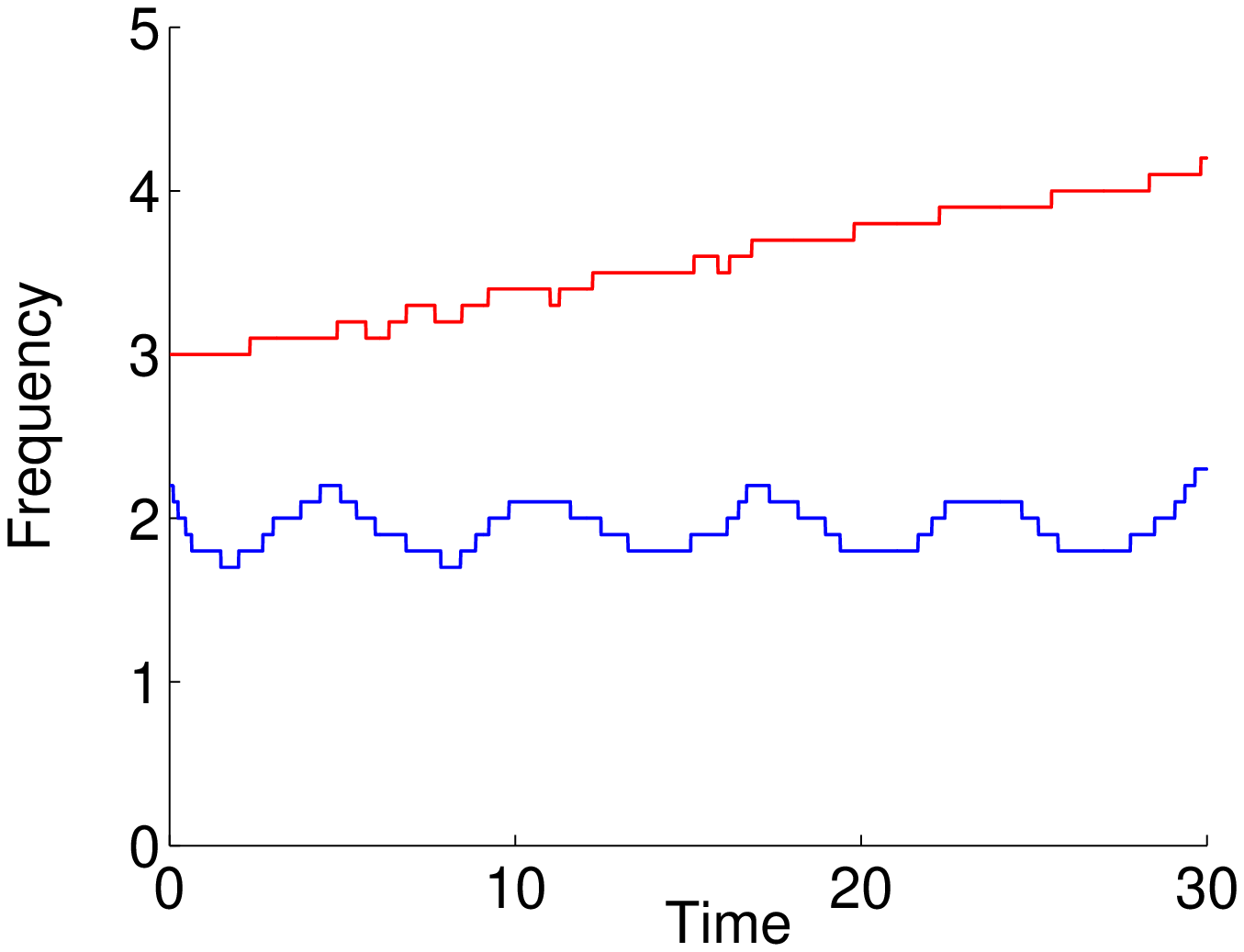}}\subfloat{\includegraphics[width=0.22\textwidth]{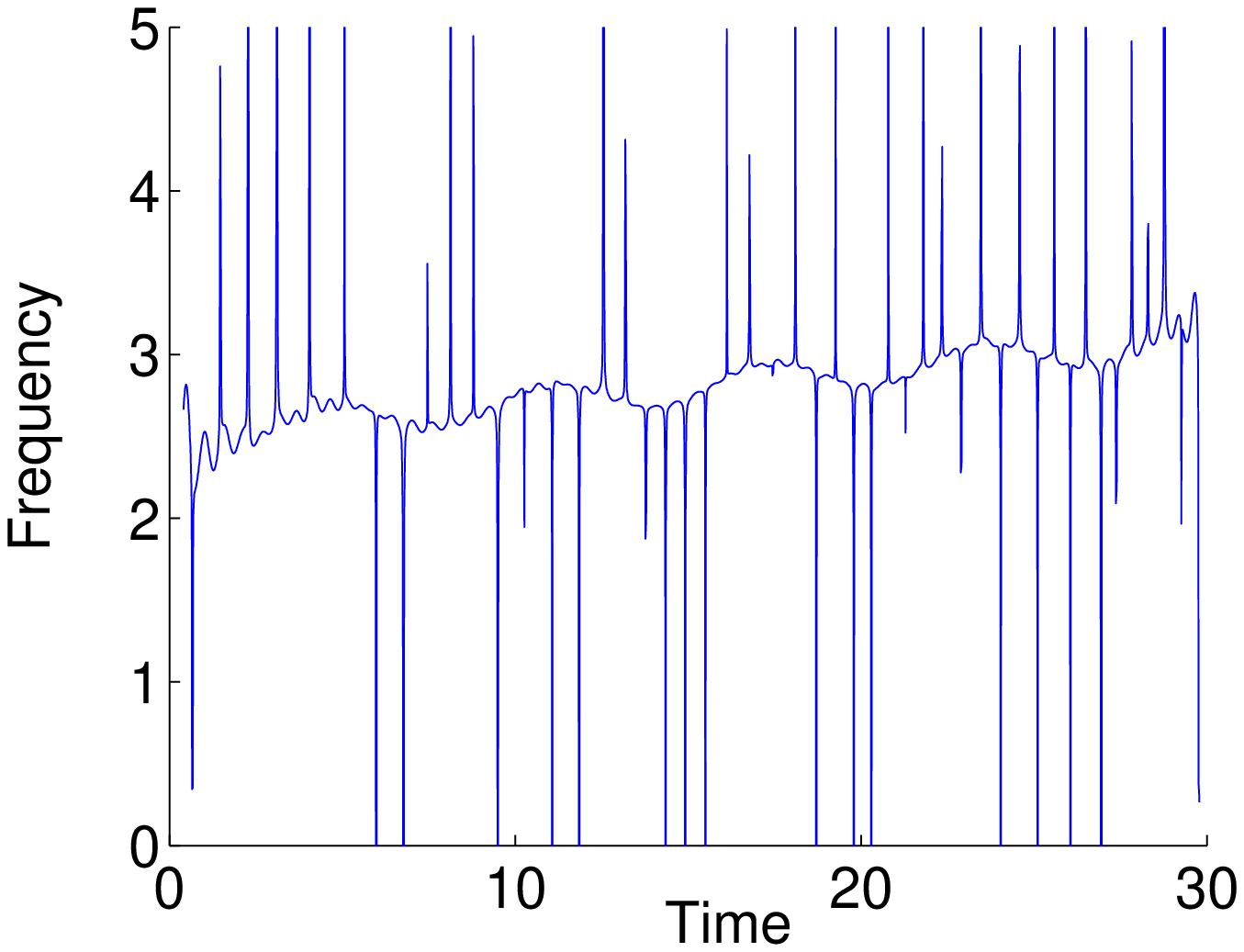}}\caption{\label{Fig6} The two-component signal $f(t)=\cos(2\pi(2t+0.2\cos t))+\cos(2\pi(3t+0.02t^{2}))$,
$t\in[0,30]$, with samples taken at $t_{n}=0.1n+T'a_{n}$, $T'=0$
(top images) and $T'=0.08$ (bottom images). We would like to recover
both elements of the IIF set $\{2-0.2\sin t,\,3+0.04t\}$, and the
$\mathrm{IF}_{S}$ calculation succeeds in doing this. In contrast,
the $\mathrm{IF}_{H}$ concept cannot separate the components, instead
roughly giving their average $2.5+0.02t-0.1\sin t$, and its computation
also exhibits spurious singularities.}
\end{figure}

\begin{figure}[H]
\begin{centering}
\subfloat{\includegraphics[width=0.25\textwidth]{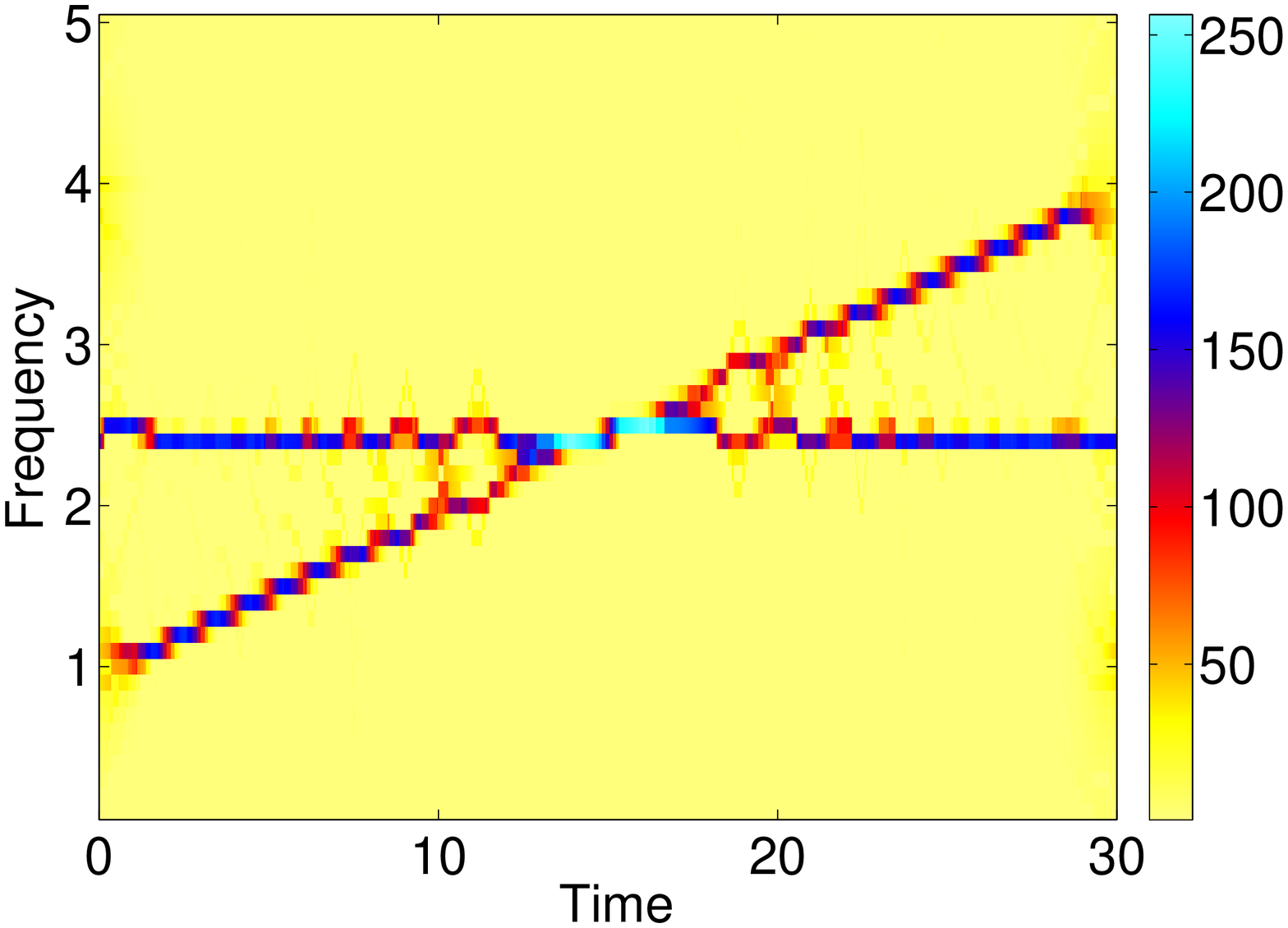}}\subfloat{\includegraphics[width=0.22\textwidth]{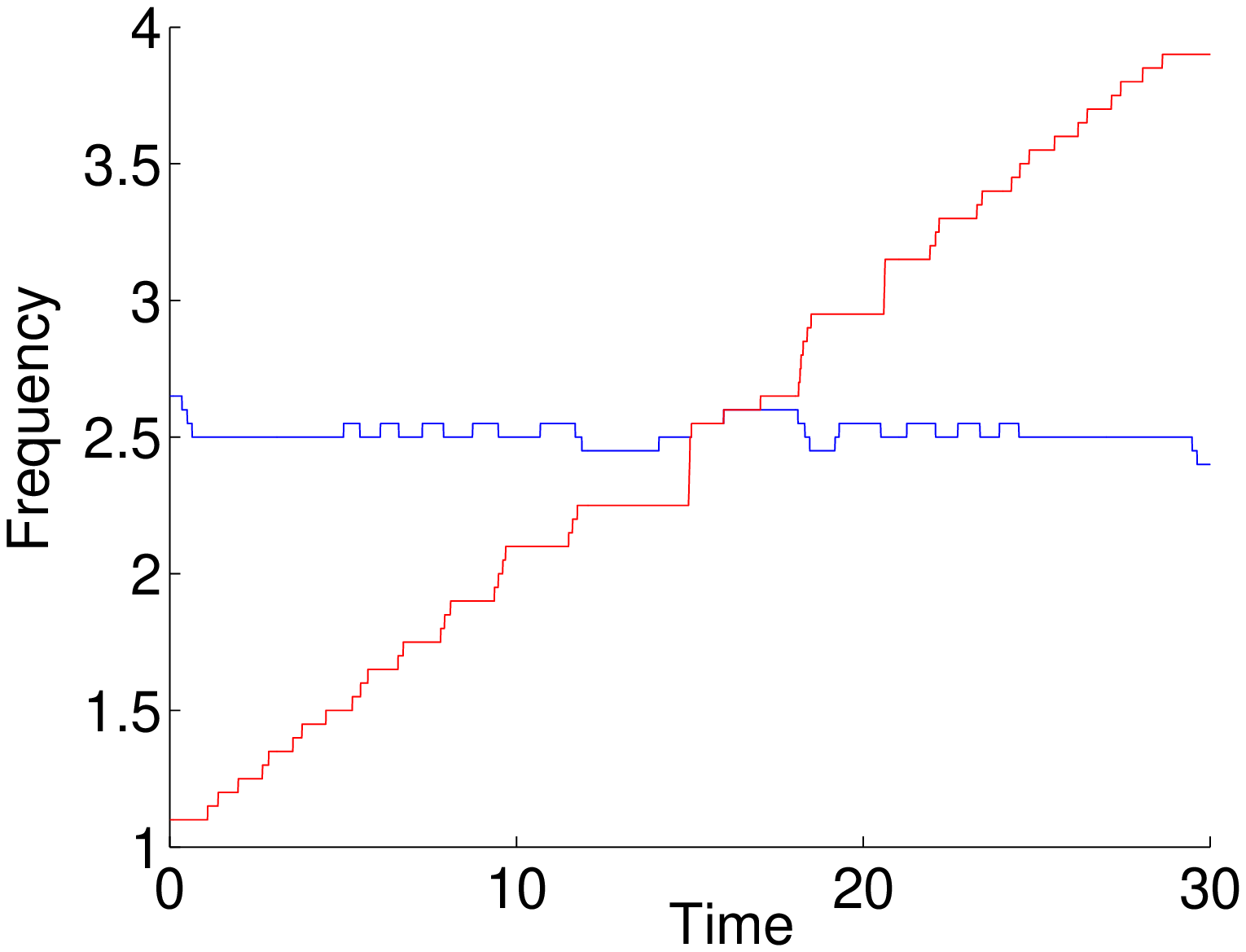}}\subfloat{\includegraphics[width=0.22\textwidth]{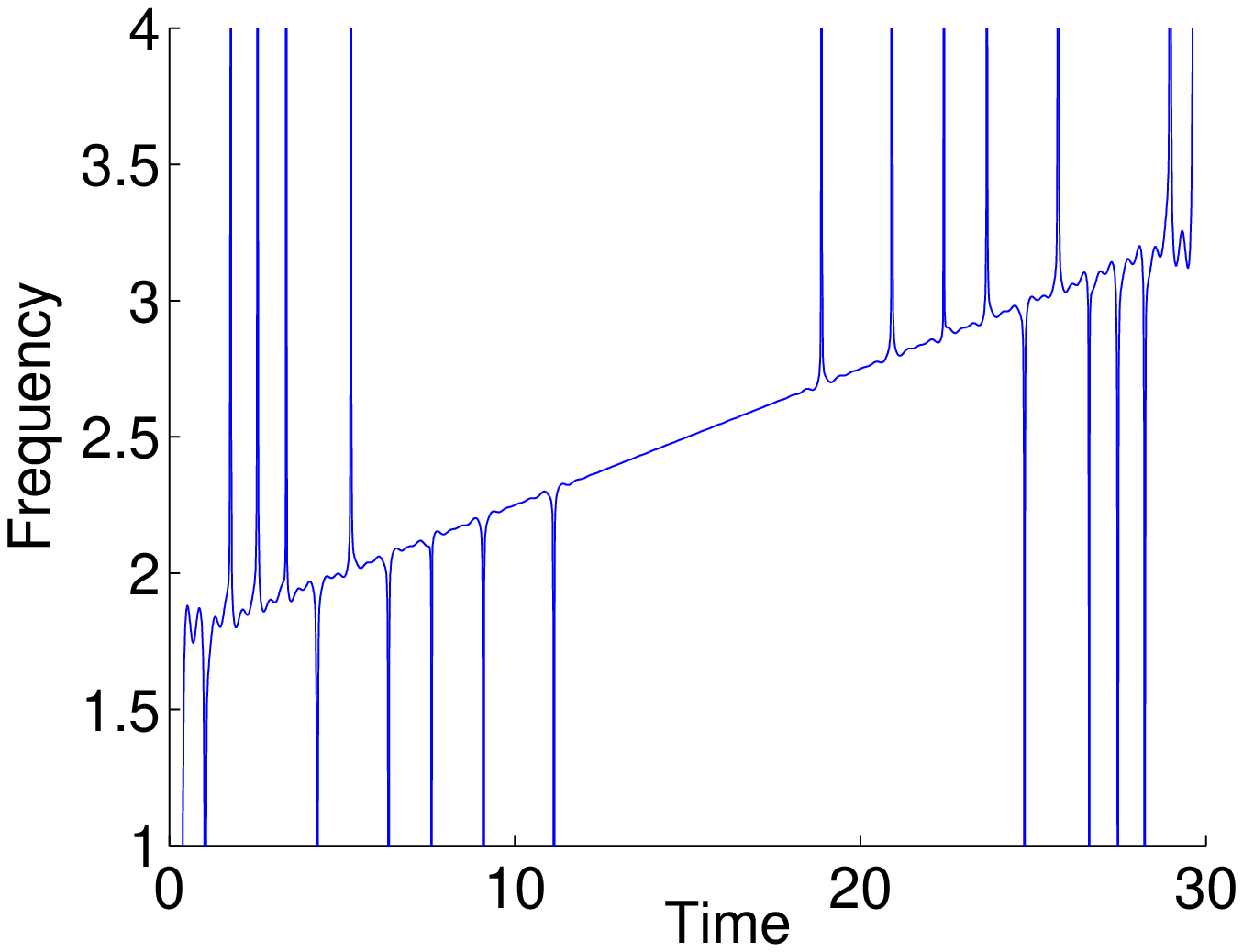}} 
\par\end{centering}

\centering{}\subfloat{\includegraphics[width=0.25\textwidth]{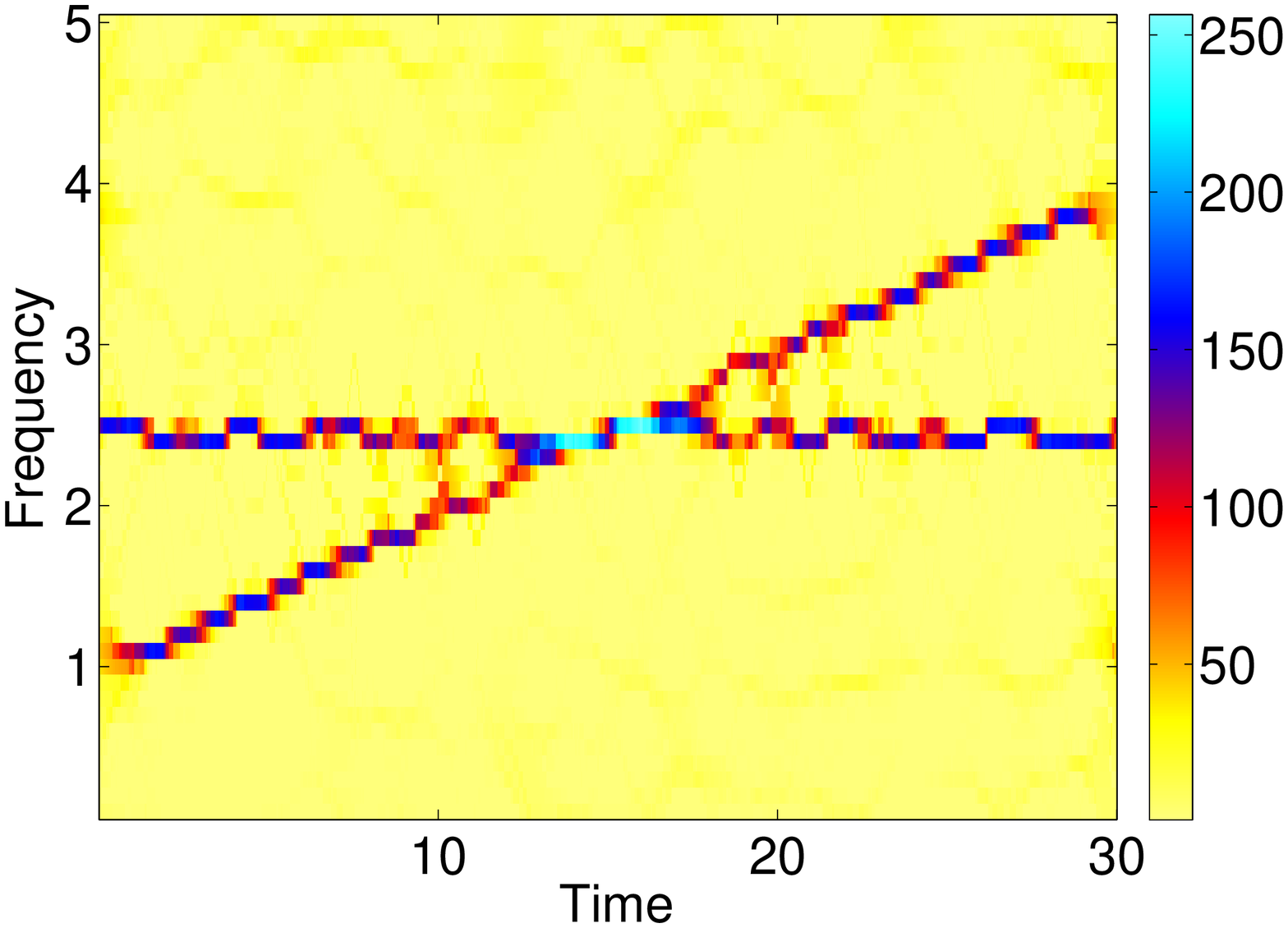}}\subfloat{\includegraphics[width=0.22\textwidth]{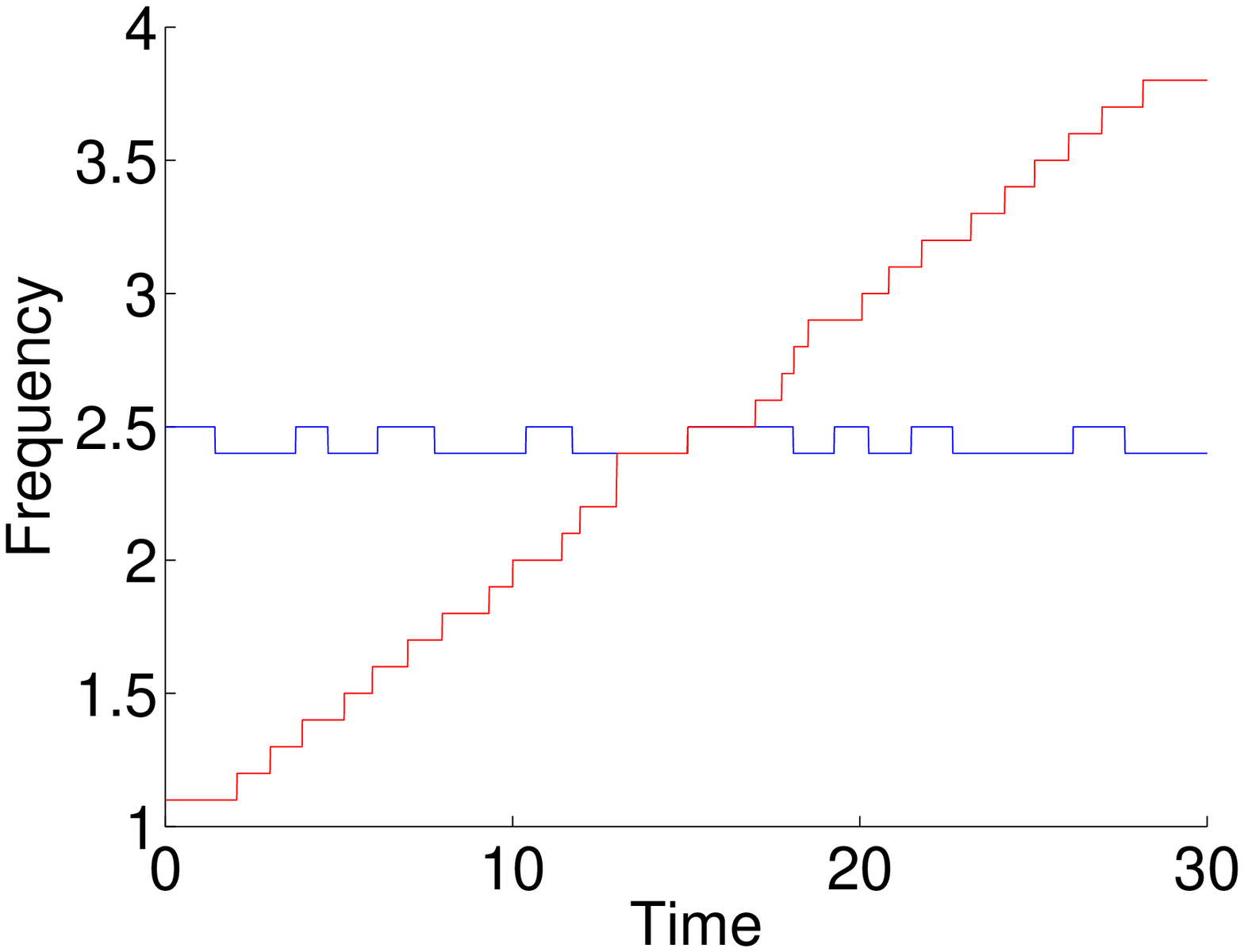}}\subfloat{\includegraphics[width=0.22\textwidth]{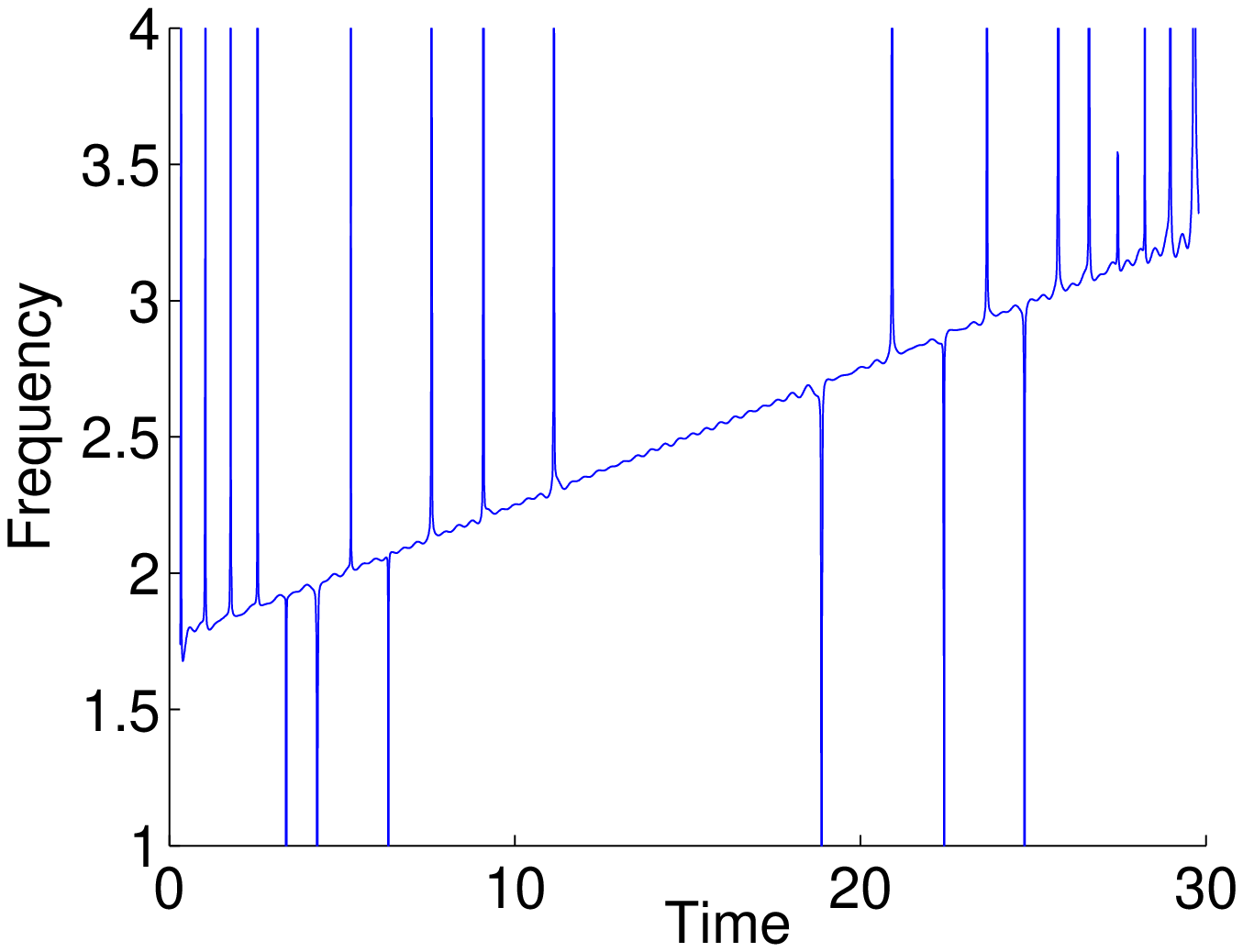}}\caption{\label{Fig7} The signal $f(t)=\cos(5\pi t)+\cos(2\pi(t+0.05t^{2}))$,
$t\in[0,30]$, with samples taken at $t_{n}=0.1n+T'a_{n}$, $T'=0$
(top images) and $T'=0.08$ (bottom images). This example is similar
to the previous one but the IIF curves cross each other. However,
the plot of $S^{\alpha,\gamma}$ shows a sharp separation between
the two curves around the crossover point and can distinguish between
them clearly.}
\end{figure}

The examples in Figures \ref{Fig3}-\ref{Fig7} show the advantages
of the $\mathrm{IF}_{S}$ approach over the traditional $\mathrm{IF}_{H}$
concept. In Figure \ref{Fig5}, the bandlimited reconstruction method
fails to recover $f$ accurately due to the low sampling rate, and
this is reflected in the computation of $\mathrm{IF}_{H}f$, but STFT
Synchrosqueezing still manages a good result. This can be possibly
explained by the fact that the Nyquist rate is essentially a concept
for bandlimited signals and is only relevant in that context, whereas
our STFT Synchrosqueezing theory is built around $\mathcal{B}_{\epsilon,d}$
signals and only estimates their IIF, not the signals themselves.
In Figures \ref{Fig4}, \ref{Fig6} and \ref{Fig7}, our sampling
rate is high enough and the bandlimited reconstruction method can
accurately determine $f$ itself, but the subsequent calculation of
$\mathrm{IF}_{H}f$ amplifies the effects of any noise or numerical
roundoff errors. In contrast, we find that $\mathrm{IF}_{S}f$ is
robust to such disturbances. We also note that determining a meaningful
IF for the type of signal in Figure \ref{Fig7} is often difficult
\cite{DEGKTT92} and such signals are certainly not in the class $\mathcal{B}_{\epsilon,d}$,
but the result is nevertheless very good.\\

We now discuss a real-world problem in electrocardiography (ECG) to
which our methods are applicable. In addition to describing the heart's
electrical activity, the ECG signal contains information about a signal
describing respiration. It is important in many clinical situations
to be able to determine properties of this respiration signal from
the ECG signal. For example, in an examination for tachycardia during
sleeping, where only the ECG signal and no respiration signal is recorded,
it allows for the detection and classification of sleep apnea. It
is well known in the ECG field that the customary surface ECG signal
is influenced by respiration, since inhalation and exhalation change
the thoracic electrical impedance, which suggests that the respiration
signal can be estimated from the ECG signal. The ECG-Derived Respiration
(EDR) class of techniques \cite{MMZM85}, in development since the
late 1980s, accomplish this and have proved to be a useful clinical
tool. We now show that STFT Synchrosqueezing provides an alternative
method to extract key information about the respiration signal from
an ECG signal. We can find the instantaneous frequency profile of
the respiration signal, which gives a more precise and adaptive description
of respiration than many of the existing techniques.\\

In Figure \ref{FigResp1}, we are given the lead II ECG signal and
the true respiration signal of a healthy $30$ year old male, recorded
over an $8$ minute interval. The sampling rates of the ECG and respiration
signals are respectively 512Hz and 64Hz. We take the R peaks (the
sharp, tall spikes in the first image in Figure \ref{FigResp1}) of
the ECG signal and use these samples to approximate the IF of the
respiration signal, without using any knowledge of the actual respiration
signal. We do not have samples of the respiration signal itself, but
we can view the R peaks as samples of an envelope of the ECG signal,
and based on the physiological facts discussed above, this envelope
would be expected to have the same IF profile as the actual respiration
signal. We apply the methods from Sections \ref{SecMain} and \ref{SecBL}
to compute $\mathrm{IF}_{S}$ and $\mathrm{IF}_{H}$ from the R peaks,
and use the actual, recorded respiration signal to compare the validity
of our results. Let the ECG and true respiration signals be denoted
by $E(t)$ and $R(t)$ respectively, where $t\in[0,480]$ seconds.
There are $589$ R peaks appearing at times $t_{k}\in[0,480]$, $t_{k}<t_{k+1}$,
$1\leq k\leq589$. For the calculation of $\mathrm{IF}_{S}$, we use
the impulse train-like function 
\[
\tilde{f}_{Rpeaks}(t)=\left\{ \begin{array}{ll}
(t_{k}-t_{k-1})E(t) & \mbox{ if }t=t_{k}\\
0 & \mbox{ otherwise }
\end{array}\right.,
\]

\noindent and for $\mathrm{IF}_{H}$, we simply use the samples $\{E(t_{k})\}$.
The results are shown in Figure \ref{FigResp1} below. The third image
in Figure \ref{FigResp1} shows that the $\mathrm{IF}_{S}$ computed
from $\tilde{f}_{Rpeaks}$ is a good approximation to the $\mathrm{IF}_{S}$
of the true respiration signal $R(t)$. On the other hand, the $\mathrm{IF}_{H}$
determined from the R peaks in the fourth image has little in common
with $\mathrm{IF}_{H}R(t)$. In fact, $\mathrm{IF}_{H}R(t)$ is often
negative and admits no obvious interpretation, unlike the $\mathrm{IF}_{S}$.
This is likely a result of the fact that ECG measurements usually
contain large amounts of noise, which $\mathrm{IF}_{H}$ does not
handle well, and many standard noise reduction techniques are not
applicable here since they would smooth out the R peaks. In Figure
\ref{FigResp2}, it can be seen that the spacing of respiration cycles
in $R(t)$ is reflected by $\mathrm{IF}_{S}$ of the R peaks; closer
spacing corresponds to higher $\mathrm{IF}_{S}$ values, and wider
spacing to lower $\mathrm{IF}_{S}$ values.

\begin{figure}[H]
\centering{}\subfloat{\includegraphics[width=0.22\textwidth]{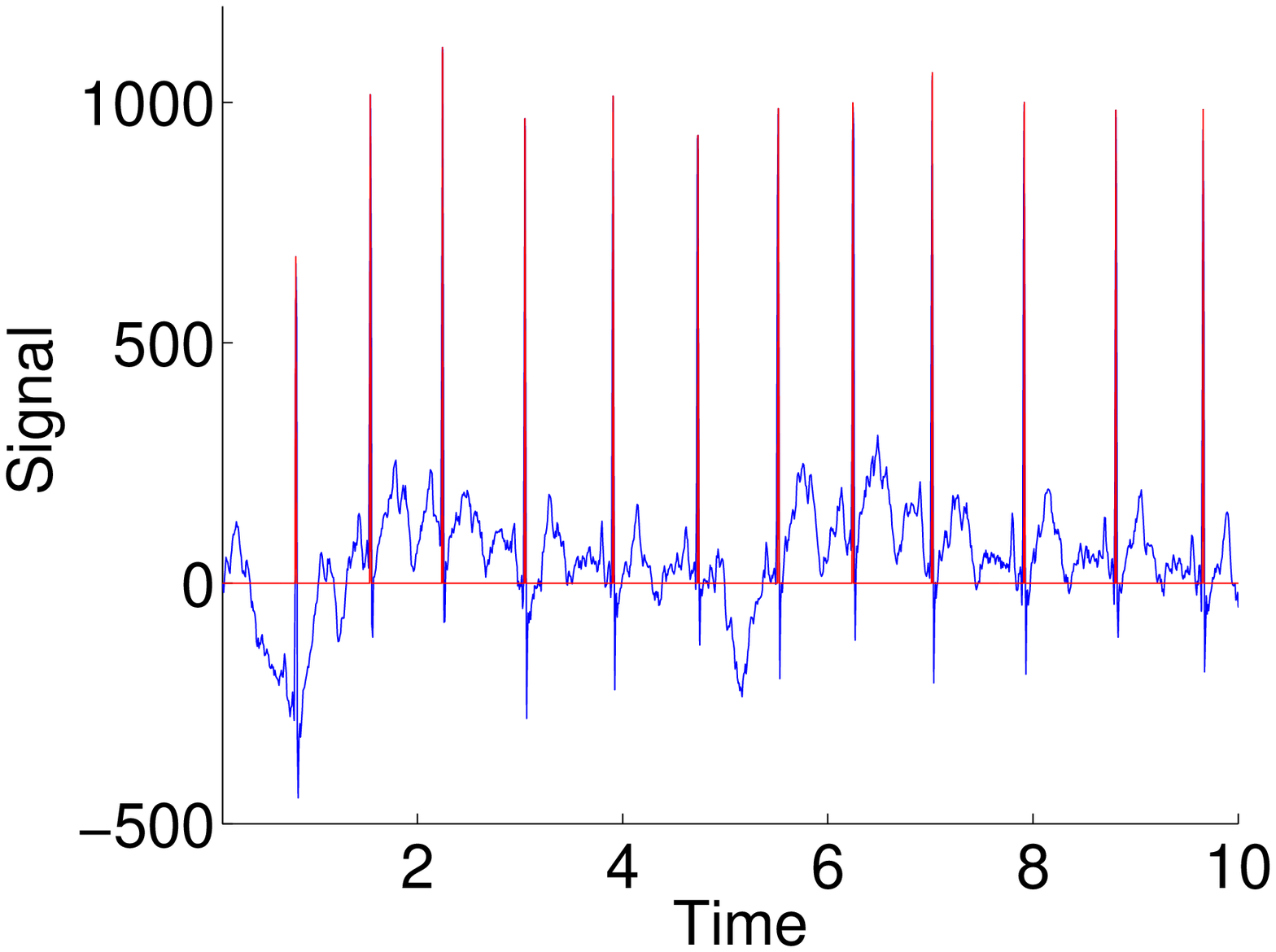}}\subfloat{
\includegraphics[width=0.22\textwidth]{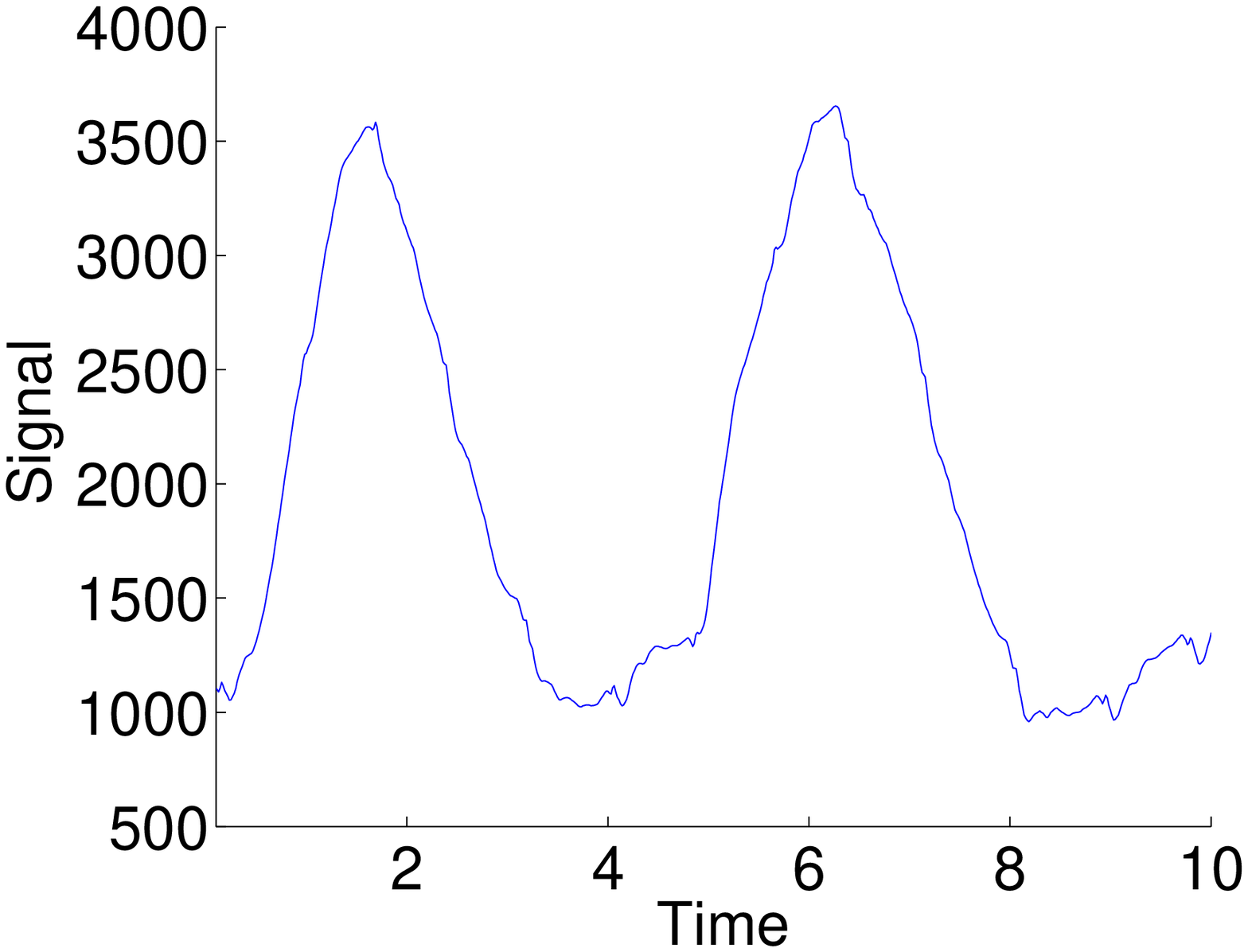}} \subfloat{
\includegraphics[width=0.22\textwidth]{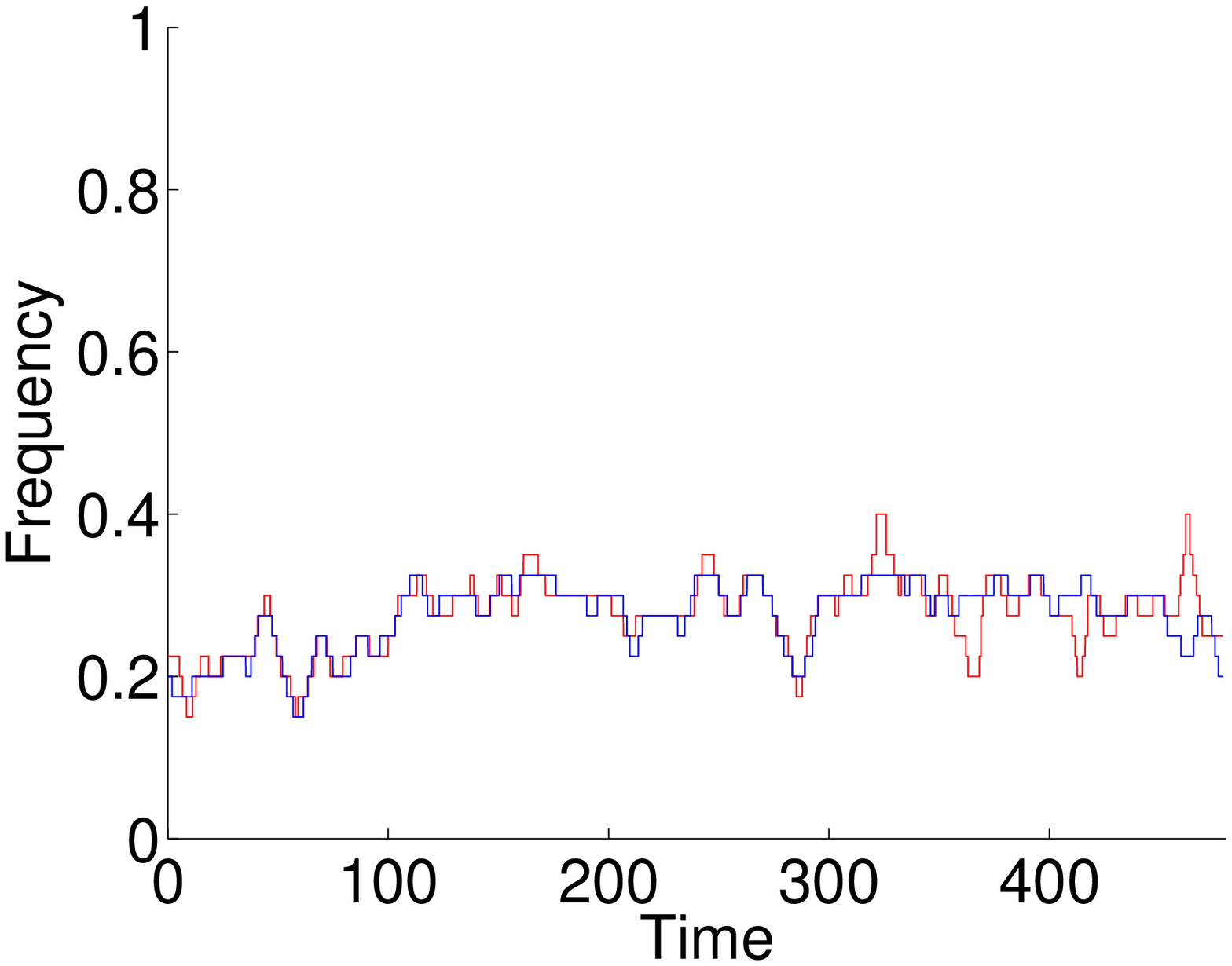}}\subfloat{
\includegraphics[width=0.22\textwidth]{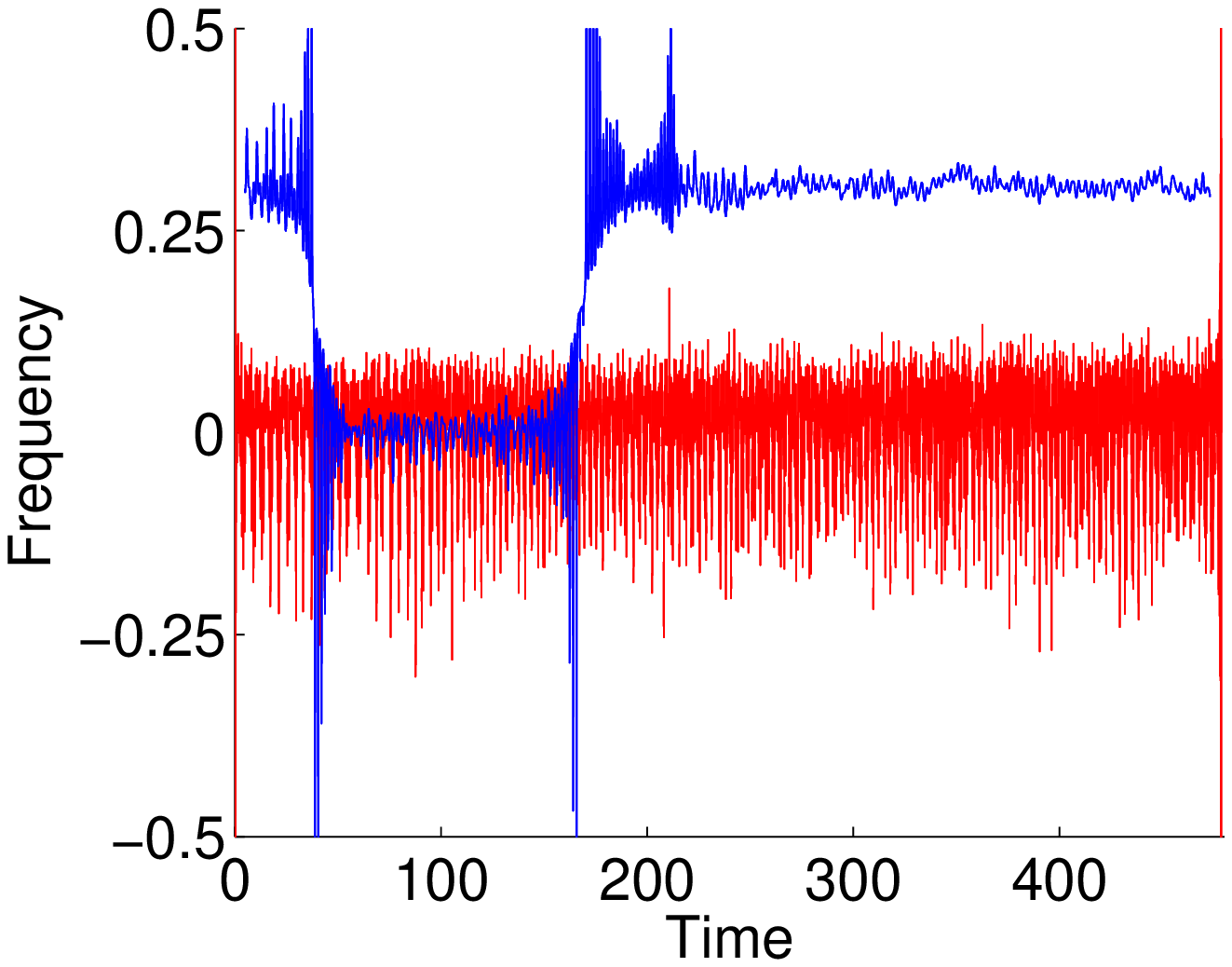}} \caption{\label{FigResp1} First Image: A 10-second part of the ECG signal.
The R peaks are highlighted in red. Second Image: A 10-second part
of the respiration signal. Third Image: The $\mathrm{IF}_{S}$ computed
from $\tilde{f}_{Rpeaks}$ (blue) and the $\mathrm{IF}_{S}$ of the
actual respiration signal $R(t)$ (red). Fourth Image: The $\mathrm{IF}_{H}$
computed from $E(t_{k})$ using bandlimited reconstruction (blue)
and the $\mathrm{IF}_{H}$ of $R(t)$ (red).}
\end{figure}

\begin{figure}[H]
\centering{}\subfloat{\includegraphics[width=0.95\textwidth]{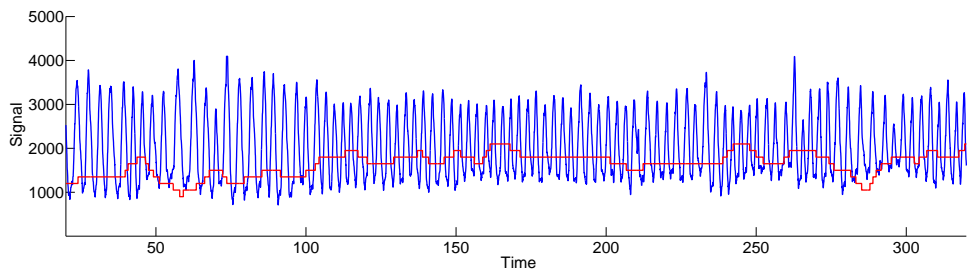}}
\caption{\label{FigResp2} The first $300$ seconds of $R(t)$ (blue) with
the $\mathrm{IF}_{S}$ estimated from $\tilde{f}_{Rpeaks}$ (red)
superimposed on top of the graph of $R(t)$. }
\end{figure}

\begin{acknowledgement*}
The authors would like to thank Professor Ingrid Daubechies for many
valuable discussions in the course of this work, and Dr. Ray F. Lee
for assistance in collecting the respiration signal. H.-T. Wu also
acknowledges discussions with Dr. Shu-Shya Hseu and Prof. Chung-Kang
Peng. The authors acknowledge support by FHWA grant DTFH61-08-C-00028.
\end{acknowledgement*}
\bibliographystyle{siam}
\bibliography{nonuniform}

\end{document}